\numberwithin{equation}{section}
\newtheorem{theorem}{Theorem}
\newtheorem{proposition}{Proposition}
\newtheorem{lemma}{Lemma}
\newtheorem{corollary}{Corollary}
\newtheorem{definition}{Definition}
\newtheorem*{theorem*}{Theorem}
\newtheorem*{proposition*}{Proposition}
\newtheorem*{lemma*}{Lemma}
\newtheorem*{corollary*}{Corollary}
\newtheorem*{conjecture*}{Conjecture}
\newtheorem*{question*}{Question}
\newtheorem*{remark*}{Remark}
\newtheorem*{definition*}{Definition}
\renewcommand{\leq}{\leqslant}
\renewcommand{\geq}{\geqslant}
\newcommand{\Z}{\mathbb{Z}}
\newcommand{\R}{\mathbb{R}}
\newcommand{\C}{\mathbb{C}}
\newcommand{\eps}{\varepsilon}
\DeclareMathOperator{\Spec}{Spec}
\DeclareMathOperator{\Supp}{Supp}
\newcommand{\wh}{\widehat}
\newcommand{\wt}{\widetilde}
\newcommand{\E}{\mathbb{E}}
\newcommand{\U}{\mathcal{U}}
\newcommand{\ZN}{\Z/N\Z}
\begin{document}

\title{On arithmetic progressions in $A+B+C$}
%\shorttitle{On arithmetic progressions in $A+B+C$}

\author{Kevin Henriot}

\begin{abstract}
	Our main result states that when $A,B,C$ are subsets
	of $\Z/N\Z$ of respective densities $\alpha,\beta,\gamma$, 
	the sumset $A+B+C$ contains
	an arithmetic progression of length at least
	$e^{ c(\log N)^c}$ 
	for densities
	$\alpha \geq (\log N)^{-2+\eps}$ and $\beta,\gamma \geq e^{ -c(\log N)^c }$,
	where $c$ depends on $\eps$.
	Previous results of this type required one set 
	to have density at least $(\log N)^{ -1 + o(1) }$.
	Our argument relies on the method of Croot, Laba and Sisask to establish
	a similar estimate for the sumset $A+B$ and on the recent
	advances on Roth's theorem by Sanders.
	We also obtain new estimates for the analogous 
	problem in the primes studied by Cui, Li and Xue.	
\end{abstract}

\maketitle

\section{Introduction}
\label{sec:introduction}

Let $A$ and $B$ be subsets of a cyclic group $\ZN$
of density $\alpha$ and $\beta$.
The problem of finding long arithmetic progressions
in $A+B$ has a rich history starting
with the striking result of Bourgain \cite{bourgainAPs}:
the sumset $A+B$ always contains an arithmetic progression of length
at least $e^{c(\alpha\beta\log N)^{1/3}}$
provided the densities satisfy
$\alpha\beta \geq (\log N)^{-1+o(1)}$ (and the progression is
nontrivial in this range: this will always be the case later when
we specify a range of density).
Major progress was made by Green \cite{greenAPs}
who showed that, under the same condition on densities, 
the progression could be taken as large as
$e^{c(\alpha\beta\log N)^{1/2}}$.
Sanders \cite{sandersAPs} later found a very
different proof of Green's theorem
and yet a third and relatively simple proof was provided recently
by Croot, Laba and Sisask~\cite{CLS}.

For fixed densities $\alpha$ and $\beta$,
the progression found has length $e^{c\sqrt{\log N}}$
and this has not been improved to date, while a negative result of 
Ruzsa \cite{ruzsa} says that one cannot do better than 
$e^{c(\log N)^{2/3+\eps}}$.
However when densities are allowed to decrease with $N$,
a remarkable result was obtained recently by Croot, Laba and Sisask~\cite{CLS}
Improving on a first result of Croot and Sisask \cite{CS},
they showed that the sumset $A+B$ contains
an arithmetic progression of size at least
$e^{ c (\alpha \log N)^{1/2} / (\log 2\beta^{-1})^{3/2} }$
in a range 
$\alpha (\log \tfrac{2}{\beta})^{-5} \geq C(\log N)^{-1+o(1)}$.
While the theorems of Bourgain and Green require one set
to have density at least $(\log N)^{-1/2+o(1)}$, this allows
for both sets to have density as low as $(\log N)^{-1+o(1)}$;
further, one set may even have 
exponentially small density $e^{-C(\log N)^{1/5+o(1)}}\!$.

The analogous problem for three-fold sumsets was first studied
by Freiman, Halberstam and Ruzsa \cite{FHR},
who established that the sumset $A+A+A$ contains
a much longer progression: indeed of length at least
$N^{c\alpha^3}$.
Green \cite{greenAPs} extended this to $N^{ c\alpha^{2+o(1)} }$ and
Sanders \cite{sandersAPs} to $N^{ c\alpha^{1+o(1)} }$;
however, all of these results required $\alpha \geq (\log N)^{-1/2+o(1)}$.
In contrast, the best result known for four sets or more, due
to Sanders \cite{sandersBR}, 
says that the sumset $A+A+A+A$ contains an arithmetic progression of length
$N^{c/( \log 2\alpha^{-1} )^4 }$ when $\alpha \geq e^{-C(\log N)^{1/5}}$:
in that case all the summands may be rather sparse.
In this work we investigate in detail the sumset $A+B+C$,
aiming at establishing results
valid for sparse sets $B$ and $C$
and in a large range of $\alpha$.

We now turn to the precise results,
starting with the theorem of 
Croot, Laba and Sisask \cite{CLS},
which constitutes the state-of-the-art 
on arithmetic progressions in $A+B$.

\begin{theorem}[Croot, Laba, Sisask]
	\label{thm:CLS}
	Suppose that $A$ and $B$ are subsets of $\ZN$ of 
	respective densities $\alpha$ and $\beta$.
	Then there exists an absolute constant $c > 0$ such that
	$ A + B $ contains an arithmetic progression of length at 
	least\footnote{
	We assume $N \geq 1 + \exp(e^e)$ throughout 
	to alleviate logarithmic notation.}
	\begin{align*}
		e^{ c (\alpha\log N)^{1/2} (\log 2\beta^{-1})^{-3/2} }
		\quad \text{if} \quad
		\alpha \big(\! \log \tfrac{\log N}{\beta} \big)^{-5}
		\geq (c\log N)^{-1}.
	\end{align*}
\end{theorem}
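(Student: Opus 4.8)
The plan is to produce a base point $x_0\in A+B$ together with a long arithmetic progression $P$ through the origin such that $x_0+P\subseteq A+B$. It is convenient to pass to the convolution $f=1_A*\mu_B$, where $\mu_B=|B|^{-1}1_B$: then $0\le f\le 1$, $\E f=\al$, and $\Supp f=A+B$, so the task reduces to keeping $f$ bounded away from zero along $x_0+P$. The first ingredient is the Croot--Sisask $L^p$-almost-periodicity lemma applied to $1_A$ and $\mu_B$: for an integer parameter $p$ --- to be pinned down only at the very end --- and a small parameter $\eps$ comparable to $\al$, it furnishes a symmetric set $T\ni 0$, of density $\delta\ge(\beta/2)^{O(p\eps^{-2})}$, with $\|\tau_t f-f\|_{L^p(\mu)}\le\eps\al^{1/p}$ for every $t\in T$, where $\tau_t f(x)=f(x+t)$ and $\mu$ is the uniform probability measure on $\ZN$. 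Summing over elements of $T$ and using translation-invariance of the $L^p$-norm, every element of a bounded-fold sumset of $T$ is again an almost-period of $f$, with the error multiplied proportionally.

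The second step converts this unstructured set of almost-periods into an arithmetic-progression-rich object. Combining the almost-periodicity with the Bohr-set technology --- a Bogolyubov/Chang-type extraction, or an iterative refinement --- one arranges that $f$ is $L^p$-almost-invariant, with a mildly degraded threshold, under translation by a Bohr set $H=\Bohr(\Lambda,\rho)$ whose rank $d=|\Lambda|$ and radius $\rho$ are under control; the crucial feature one needs here is that $d$ and $\log(1/\rho)$ depend only on $p$, $\al$ and $\log 2\beta^{-1}$, and not on $N$ nor on the (minuscule) density $\delta$ of $T$. By Dirichlet's theorem on simultaneous Diophantine approximation, such a Bohr set contains a genuine arithmetic progression $P=\{v,2v,\dots,Lv\}$ of length $L\gg\rho\,N^{1/d}$.

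Next one transfers $P$ into the sumset. Since each $jv$ lies in $H$, hence is an $L^p$-almost-period of $f$, we have $\E_x\max_{1\le j\le L}|f(x+jv)-f(x)|^p\le\sum_{j=1}^L\E_x|f(x+jv)-f(x)|^p\le L(C\eps)^p\al$. The level set $\{f\ge\al/2\}$ has density at least $\al/2$, so dividing by this density and applying an averaging argument produces an $x_0$ with $f(x_0)\ge\al/2$ and $\max_{1\le j\le L}|f(x_0+jv)-f(x_0)|<\al/4$ --- whence $x_0+\{0,v,\dots,Lv\}\subseteq A+B$ --- as soon as $2L(C\eps)^p<(\al/4)^p$; since $\eps$ is a fixed small multiple of $\al$ this is the single requirement $L<2^{p-1}$. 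Therefore $A+B$ contains an arithmetic progression of length $\min\{L,\,2^{p-1}\}\gg\min\{\rho\,N^{1/d},\ 2^{p-1}\}$.

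It remains to optimise the free parameters. Balancing $\rho N^{1/d}$ against $2^{p-1}$ forces $p$ to be of order $(\log N)/d$ up to the lower-order correction $\log(1/\rho)$, and substituting the dependence of $d$, $\rho$ and $\eps$ on $p,\al,\beta$ then produces an equation for $p$ whose solution is of order $\sqrt{\log N}$ times the appropriate powers of $\al$ and $\log 2\beta^{-1}$; this is the source of the exponent $\tfrac12$, of the factor $(\log 2\beta^{-1})^{-3/2}$, and of the admissible range $\al(\log\tfrac{\log N}{\beta})^{-5}\ge(c\log N)^{-1}$, which is exactly the condition making the resulting length exceed the trivial one. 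I expect the second step to be the main obstacle: a crude route --- feeding $T$ directly into Bogolyubov's theorem --- would make $d$ a fixed power of $\log(1/\delta)\asymp p\eps^{-2}\log 2\beta^{-1}$, hence a fixed power of $p$, which after optimisation degrades the $\log N$-exponent (one would get roughly $(\log N)^{1/5}$ in place of $(\log N)^{1/2}$). Producing instead a Bohr set of near-optimal rank while retaining $L^p$-control with $p$ large, so that the exceptional set in the averaging step of Step~3 stays negligible along the entire progression, is the technical heart of the Croot--Laba--Sisask method, and it is there that the bookkeeping genuinely has to be engineered.
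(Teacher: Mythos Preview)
The paper does not prove Theorem~\ref{thm:CLS}: it is quoted as a result of Croot, Laba and Sisask \cite{CLS} and serves only as background. There is therefore no ``paper's own proof'' to compare against. What the paper does use from \cite{CLS} is the Bohr-almost-periodicity statement recorded here as Lemma~\ref{thm:CLSbohrCS} (a special case of \cite[Theorem~7.4]{CLS}), which it applies to three-fold sumsets rather than to $A+B$.

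As for your sketch itself: the overall architecture --- Croot--Sisask almost-periodicity in $L^p$, upgrading the set of almost-periods to a Bohr set, Dirichlet to extract a progression, then an $L^p$/$L^\infty$ averaging to pin down a base point --- is indeed the shape of the argument in \cite{CLS}. You correctly flag the decisive point: one cannot simply run Bogolyubov on the almost-period set $T$, since that makes the Bohr rank polynomial in $\log(1/\delta)\asymp p\eps^{-2}\log 2\beta^{-1}$ and ruins the exponent. The actual mechanism in \cite{CLS} is to combine the Croot--Sisask lemma with Chang's spectral lemma (cf.\ the discussion preceding Lemma~\ref{thm:CLSbohrCS} here), which yields a Bohr set of rank $O\big(p\theta^{-2}(\log\tfrac{2}{\theta\alpha\beta})^3\big)$ directly; this linear dependence on $p$ is what produces the $(\log N)^{1/2}$ after optimisation. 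Your write-up leaves this step as an acknowledged black box, so as it stands it is an accurate outline rather than a proof.
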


In the case of three summands,
the best bounds known are 
due to Sanders \cite{sandersAPs}.

\begin{theorem}[Sanders]
	\label{thm:sandersAPs}
	Suppose that $A,B,C$ are subsets of $\ZN$ of 
	respective densities $\alpha,\beta,\gamma$.
	Then there exists an absolute constant $c > 0$ such that
	$ A + B + C $ contains an arithmetic progression of length at least
	\begin{align*}
		N^{ c (\alpha \beta\gamma)^{1/3} }
		\quad \text{if} \quad
		(\alpha \beta\gamma)^{1/3} 
		\geq (c\log N)^{-1/2}
		(\log\log N)^{1/2}.
	\end{align*}
\end{theorem}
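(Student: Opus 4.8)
The plan is to run a Fourier-analytic density-increment argument relative to Bohr sets, very much in the spirit of Green's treatment of $A+A+A$ and of Bourgain's proof of Roth's theorem; the polynomial-in-$N$ length will come from the fact that a Bohr set of bounded rank automatically carries a long progression. We may assume $N$ prime (replace it by a prime in $(N/2,N]$, at the cost of constants) and we set $\sigma=(\alpha\beta\gamma)^{1/3}$. A routine Dirichlet/geometry-of-numbers argument shows that a regular Bohr set $\Bohr(\Lambda,\rho)$ of rank $d=|\Lambda|$ contains an arithmetic progression of length at least $c\,\rho\,N^{1/(d+1)}$, so it suffices to produce a translate of a regular Bohr set $\Bohr(\Lambda,\rho)$ inside $A+B+C$ with
\[
 |\Lambda|\le C\sigma^{-1}\qquad\text{and}\qquad \log(1/\rho)\le C\sigma^{-1}\log(2/\sigma).
\]
Granting this, the density hypothesis is calibrated precisely so that the bookkeeping closes: $\sigma\ge(c\log N)^{-1/2}(\log\log N)^{1/2}$ forces $\log(2/\sigma)\ll\log\log N$ and $\sigma^{2}\log N\gg\log\log N$, whence $\log(1/\rho)$ is a small multiple of $\sigma\log N$ and $1/(d+1)\ge c'\sigma$, so the progression produced has length $\ge N^{c''\sigma}$.

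The engine is the following dichotomy, carried out relative to a current regular Bohr set on translates of which $A,B,C$ have relative densities $\alpha',\beta',\gamma'$. Expanding $(1_A*1_B*1_C)(n)$ over characters (of the ambient Bohr set) and bounding two of the three factors by Parseval, one sees that the trivial frequency dominates the remainder --- so that $A+B+C$ contains a full translate of a sub-Bohr set of the same rank and comparable radius --- as soon as \emph{one} of $A$, $B$, $C$ is Fourier-uniform, relative to the current Bohr set, below the respective thresholds $\sqrt{\beta'\gamma'}$, $\sqrt{\alpha'\gamma'}$, $\sqrt{\alpha'\beta'}$. The geometric mean of these thresholds is $(\alpha'\beta'\gamma')^{1/3}\ge\sigma$, which is precisely why the cube root appears in the exponent. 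In the opposite case each of the three sets has a large Fourier coefficient relative to the current Bohr set.

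Starting from the Bohr set $\ZN$ one then iterates: whenever the triple is not in the uniform case, a large Fourier coefficient of (say) $1_A$ produces, by the usual $L^{2}$ density-increment argument regularised so as to preserve Bohr structure, a refined regular Bohr set of rank one larger and radius only a controlled factor smaller, on a translate of which the relative density of $A$ has increased multiplicatively (similarly for $B$ or $C$). Since relative densities lie in $(0,1]$ and grow geometrically, the iteration terminates; watching $\log\!\bigl(1/(\alpha'\beta'\gamma')\bigr)$ bounds the number of steps, hence the final rank, and composing the per-step radius losses bounds $1/\rho$, after which one extracts the progression and concludes. The main obstacle --- and the place where Sanders' sharpening of the increment (via $L^{p}$-almost-periodicity for convolutions, in the circle of ideas behind his improvements to Roth's theorem) is needed rather than a naive version --- is the quantitative bookkeeping of this iteration: keeping the final rank down to $O(\sigma^{-1})$ and the radius above $\exp(-O(\sigma^{-1}\log(2/\sigma)))$, so that the progression emerging from the final Bohr set still has polynomial length $N^{c\sigma}$ throughout the stated density range. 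The Fourier dichotomy itself and the Bohr-set geometry are, by contrast, standard.
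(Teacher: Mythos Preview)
This theorem is not proved in the paper. It is quoted as a prior result of Sanders~\cite{sandersAPs}, serving only as a benchmark for the paper's own Theorems~\ref{thm:AP0}--\ref{thm:AP2}; there is no argument here to compare your sketch against.

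As a sketch of Sanders' actual proof your outline is the right genre---a Bourgain-type density increment on regular Bohr sets driven by a Fourier dichotomy---but two points are off. First, there is an internal inconsistency: you say each step raises the rank by one and that densities ``grow geometrically'' (i.e.\ by a fixed constant factor), which would terminate after $O(\log(1/\sigma))$ steps and hence give rank $O(\log(1/\sigma))$, not the $O(\sigma^{-1})$ you claim. In fact the single-frequency increment one obtains from a coefficient of size $\alpha'\sqrt{\beta'\gamma'}$ is only $\alpha'\to\alpha'(1+c\sqrt{\beta'\gamma'})$; always incrementing the set with the largest threshold and tracking the product $\alpha'\beta'\gamma'$ is what makes the cube root emerge, and this costs of order $\sigma^{-1}$ steps, not $\log(1/\sigma)$. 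Second, your attribution of the needed sharpening to ``$L^p$-almost-periodicity'' is anachronistic: Sanders' argument in~\cite{sandersAPs} predates the Croot--Sisask machinery of~\cite{CS,sandersroth2,sandersBR} and stays within the Bourgain/local-Chang framework. Indeed the whole point of the present paper is that injecting those \emph{later} techniques into the iteration of~\cite{sandersAPs} is what yields the improved Theorems~\ref{thm:AP1} and~\ref{thm:AP2}.
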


Cui, Li and Xue \cite{CLX} also recently 
studied the analogous problem 
for subsets of the primes.
We let $\log_k$ denote the logarithm iterated
$k$ times below.

\begin{theorem}[Cui, Li, Xue]
	\label{thm:CLX}
	Suppose that $A$ is a subset of the primes less than $N$
	of size $\alpha N / \log N$.
	Then there exist absolute positive constants 
	$c, c_0, c_1$ such that $A+A+A$ contains an
	arithmetic progression of length at least
	\begin{align*}
		\begin{array}{ll}
			N^{ c \alpha^2 / (\log 2\alpha^{-1}) }
			&\text{if} \quad
			\alpha \geq (\log_3 N)^{-c_0}, \\
			N^{ c \alpha^4 / (\log 2\alpha^{-1}) }
			&\text{if} \quad
			\alpha \geq (\log N)^{-c_1}.
		\end{array}
	\end{align*}
\end{theorem}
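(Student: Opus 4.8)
The plan is to deduce Theorem~\ref{thm:CLX} from the analogous statements for subsets of $\ZN$ via a standard transference to the primes, combining Theorem~\ref{thm:CLS} (or rather the three-summand version obtained by feeding a Roth-type argument into it) with the density increment coming from the $W$-trick. First I would apply the $W$-trick: choose $w$ growing slowly with $N$, set $W = \prod_{p \leq w} p$, and partition the primes in $[N]$ according to their residue class $b \bmod W$ with $(b,W)=1$. By pigeonhole there is a residue class in which $A$ has relative density $\gg \alpha \log w / \log N \cdot$ (correction factors), and after the affine change of variables $x \mapsto (x-b)/W$ this produces a set $A' \subseteq \Z/N'\Z$ with $N' \asymp N/W$ whose density $\alpha'$ satisfies $\alpha' \gg \alpha \cdot \varphi(W)/W \cdot (\log w)$; crucially $\alpha'$ is larger than $\alpha$ by roughly a factor $\log w$, and choosing $w$ a small power of $\log N$ buys us the extra logarithmic room that distinguishes the two regimes in the statement.

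The second step is to establish, on $\ZN$, the two-parameter bound: $A'+A'+A'$ (equivalently $A'+B'+C'$ with all three equal) contains a progression of length $N^{c \, \eta(\alpha')}$, where $\eta$ is one of the two exponent profiles $\alpha'^2/\log 2\alpha'^{-1}$ or $\alpha'^4 / \log 2\alpha'^{-1}$ depending on how small $\alpha'$ is. The first, denser regime follows by running Sanders's Fourier-analytic argument behind Theorem~\ref{thm:sandersAPs} but tracking the dependence more carefully — with three equal summands the Bohr-set machinery gives an exponent $\alpha^{1+o(1)}$, and squeezing out the clean shape $\alpha^2/\log 2\alpha^{-1}$ uses the recent Bogolyubov–Ruzsa / Roth bounds of Sanders. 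The second, sparser regime is where one invokes the Croot–Laba–Sisask method underlying Theorem~\ref{thm:CLS}: write $A'+A'+A' \supseteq (A'+A') + A'$, use that $A'+A'$ already has very large density (by Roth/Plünnecke-type input it contains a Bohr set, or at least has density bounded below by a fixed power of $\alpha'$), and then apply the $A+B$ estimate with the dense set $A'+A'$ in the role of $A$ and the sparse set $A'$ in the role of $B$; the $(\log 2\beta^{-1})^{-3/2}$ loss in Theorem~\ref{thm:CLS} is harmless since $\beta = \alpha'$ is only polynomially small, and the range condition $\alpha(\log\log N/\beta)^{-5} \geq (c\log N)^{-1}$ translates into $\alpha' \geq (\log N)^{-1+o(1)}$, i.e.\ exactly the $\alpha \geq (\log N)^{-c_1}$ threshold after undoing the $W$-trick.

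The third step is bookkeeping: an arithmetic progression of length $L$ in $A'+A'+A' \subseteq \Z/N'\Z$ lifts, under $x \mapsto Wx+3b$, to an arithmetic progression of the same length $L$ inside $A+A+A \subseteq \Z/N\Z$ (common difference multiplied by $W$), provided $L \cdot (\text{difference}) < N$, which one arranges by taking the progression inside a genuine interval of $\Z$ rather than wrapping around $\ZN$ — this is the usual ``progressions in $\ZN$ versus in $[N]$'' reduction and costs only constants in the exponent. Substituting $N' \asymp N/\log^{O(1)} N$ and $\alpha' \asymp \alpha \log^{\Theta(1)} N \cdot \alpha$ — more precisely $\alpha' \gg \alpha$ with the density gain absorbed into the range hypotheses — gives $N^{c\alpha^2/\log 2\alpha^{-1}}$ in the first case and $N^{c\alpha^4/\log 2\alpha^{-1}}$ in the second, after checking that $\log N' \asymp \log N$.

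I expect the main obstacle to be the second step in its \emph{sparse} regime: showing that $A'+A'$ is dense enough (a fixed power of $\alpha'$, ideally $\alpha'^{O(1)}$ with a small exponent) so that the subsequent application of Theorem~\ref{thm:CLS} yields the advertised exponent $\alpha'^4/\log 2\alpha'^{-1}$ rather than something worse. This requires a quantitatively efficient Bogolyubov-type statement — that $A'+A'-A'-A'$, or even $A'+A'$ after a further refinement, contains a large Bohr set or a set of density $\gg \exp(-O(\log^2 2\alpha'^{-1}))$ — which is precisely where Sanders's advances enter and where the precise power of $\log 2\alpha^{-1}$ in the denominator of the exponent gets decided. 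The analytic heart is making the Croot–Laba–Sisask almost-periodicity argument interact cleanly with a Bohr-set-valued ``dense'' summand; the rest is a careful but routine tracking of constants through the $W$-trick and the lift back to $[N]$.
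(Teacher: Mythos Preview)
First, note that Theorem~\ref{thm:CLX} is not proved in this paper: it is quoted from Cui, Li and Xue~\cite{CLX}, and the paper only sketches their approach in the surrounding text and extracts two technical statements from it (Propositions~\ref{thm:CLXcomputation1} and~\ref{thm:CLXcomputation2}) for later use. The approach described there is: restriction theorems for the primes (Green's~\cite{greenrestriction} and Helfgott--de~Roton's~\cite{HRrestriction}) produce a genuinely \emph{dense} model set $A_1 \subset \ZN$ of density $\gg \alpha$ or $\gg \alpha^2$, together with a function $f$ supported on the image of $A$ such that $f \ast f \ast f \geq \alpha^3\, 1_{A_1} \ast 1_{A_1} \ast 1_{A_1} - (\text{error})$; one then applies Green's theorem on $A+A+A$ in its \emph{counting-lemma} form to $A_1$, so that the triple convolution is large on a long progression, whence $f \ast f \ast f > 0$ there.

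Your proposal has a genuine gap at the very first step. After the $W$-trick the set $A'$ has density in $[N/W]$ of order
\[
\frac{|A \cap (b \bmod W)|}{N/W} \;\asymp\; \frac{\alpha N/(\varphi(W)\log N)}{N/W} \;=\; \frac{\alpha\, W}{\varphi(W)\log N} \;\asymp\; \frac{\alpha \log w}{\log N},
\]
which is still of order $(\log N)^{-1}$ for any admissible $w$. Your claim that ``$\alpha'$ is larger than $\alpha$ by roughly a factor $\log w$'' drops a factor of $1/\log N$: the $W$-trick equidistributes the primes among residue classes but does not make them dense in the integers. Consequently none of the $\ZN$ theorems you invoke (Sanders's Theorem~\ref{thm:sandersAPs}, the Croot--Laba--Sisask Theorem~\ref{thm:CLS}) apply to $A'$ directly, since their density hypotheses start at $(\log N)^{-1/2+o(1)}$ or $(\log N)^{-1+o(1)}$, not $(\log N)^{-1}$ itself.

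Your proposed workaround in the sparse regime --- that $A'+A'$ has density a fixed power of $\alpha'$ by ``Roth/Pl\"unnecke-type input'' --- is unfounded: Pl\"unnecke gives upper bounds on sumsets, not lower bounds, and Bogolyubov-type statements concern $2A'-2A'$ (and produce Bohr sets whose density is exponentially small in $1/\alpha'$, hence of order $N^{-c}$ here). There is no reason $A'+A'$ should be dense. The two ingredients you are missing are exactly the two the paper highlights: a \emph{restriction theorem} to pass from the sparse set $A'$ to a dense model $A_1$, and a \emph{counting lemma} for the $\ZN$ result so that positivity of $f \ast f \ast f$ on the progression can be recovered from a lower bound on $1_{A_1} \ast 1_{A_1} \ast 1_{A_1}$.
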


Their argument relies on a clever combination 
of Green's \cite{greenrestriction} 
and Helfgott and de Roton's \cite{HRrestriction} restriction theorems 
for primes with Green's \cite{greenAPs} theorem on $A+A+A$,
modified to obtain arithmetic progressions 
whose elements all have a certain number
of representations as a sum of three elements of $A$.
For lack of an existing expression, we call 
any lower bound on this number of representations
a \textit{counting lemma}, here and throughout the article.
Motivated by the application to the problem of sumsets of primes,
we set out, as a secondary objective, to provide 
counting lemmas in all our estimates; this is not essentially difficult
although it requires some care in the computations.

We now introduce our results.
We start with a simple observation which is
that the almost-periodicity results 
of Croot, Laba and Sisask \cite{CLS}
imply a version of Theorem~\ref{thm:sandersAPs}
which allows for two sets out of three to be sparse,
with density as small as $e^{-c(\log N)^{1/5}}$.

\begin{theorem}
	\label{thm:AP0}
	Suppose that $A,B,C$ are subsets of $\Z/N\Z$ of 
	respective densities $\alpha,\beta,\gamma$.
	Then there exists an absolute constant $c > 0$ such that
	$ A + B + C $ contains an arithmetic progression of length at least
	\begin{align*}
		N^{ c \alpha^2 / \log^4 (2/\alpha\beta\gamma) }
		\quad \text{if} \quad
		\alpha \big(\! \log \tfrac{2}{\alpha\beta\gamma}\big)^{-5/2} 
		\geq (c \log N)^{-1/2}
	\end{align*}
	such that each element of the progression has at least
	$\tfrac{1}{2} \alpha\beta\gamma N^2$ representations as a sum
	$x+y+z$ with $(x,y,z) \in A \times B \times C$.
\end{theorem}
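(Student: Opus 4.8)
\textbf{The plan} is to imitate the density-increment proof of Theorem~\ref{thm:sandersAPs}, replacing the single point at which it uses the Fourier transform of $B$ or of $C$ --- the passage to a Bohr set cut out by a large spectrum --- by an application of the $L^p$-almost-periodicity method of Croot, Laba and Sisask. The gain is that almost-periodicity returns a Bohr set whose rank is polynomial in $\alpha^{-1}$ but depends on $\beta,\gamma$ only through $\log\tfrac{2}{\beta\gamma}$, whereas the large-spectrum bound $|\Spec_\eta(B)|\leq\eta^{-2}\beta^{-1}$ degrades polynomially in $\beta^{-1}$ and forces $\beta,\gamma\geq(\log N)^{-1/2+o(1)}$. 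Write $\mu_B=1_B/|B|$, $\mu_C=1_C/|C|$ and set $F:=\mu_B*\mu_C*1_A$, so that $0\leq F\leq1$, $\E F=\alpha$, and
\begin{align*}
	(1_A*1_B*1_C)(n)\;=\;\beta\gamma N^2\,F(n)\qquad(n\in\ZN).
\end{align*}
It is enough to exhibit a regular Bohr set $\Bohr(\Gamma,\rho)$ with $|\Gamma|\leq C\alpha^{-2}\log^4\tfrac{2}{\alpha\beta\gamma}$ and $\rho\geq N^{-1/(10|\Gamma|)}$, together with a translate $\Bohr(\Gamma,\rho)+n_0$ on which $F\geq\alpha/2$: a Dirichlet argument gives an arithmetic progression of length $\gg\rho N^{1/|\Gamma|}\geq N^{c\alpha^2/\log^4(2/\alpha\beta\gamma)}$ inside that translate, and by the displayed identity each of its elements automatically has at least $\tfrac12\alpha\beta\gamma N^2$ representations $x+y+z$ with $(x,y,z)\in A\times B\times C$ --- so carrying the bound $F\geq\alpha/2$, rather than just $F>0$, costs nothing. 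The hypothesis $\alpha(\log\tfrac{2}{\alpha\beta\gamma})^{-5/2}\geq(c\log N)^{-1/2}$ is precisely what makes this radius, and hence the progression, non-trivial.

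\textbf{The almost-periodicity input.} Applying the almost-periodicity theorem of Croot, Laba and Sisask to $\mu_C*1_A$ with averaging set $B$, and convolving the conclusion with $\mu_C$, with exponent $p\asymp\log\tfrac{2}{\alpha}$ and tolerance $\eps$ a small multiple of $\alpha/\log\tfrac{2}{\alpha}$, produces a regular Bohr set $\Bohr(\Gamma_0,\rho_0)$ with
\begin{align*}
	|\Gamma_0|\;\ll\;\alpha^{-2}\log^4\tfrac{2}{\alpha\beta\gamma},\qquad\log\tfrac{1}{\rho_0}\;\ll\;\log\tfrac{2}{\alpha\beta\gamma},
\end{align*}
such that $\|\tau_tF-F\|_{L^p(\ZN)}\leq\eps$ for every $t\in\Bohr(\Gamma_0,\rho_0)$; in particular $\|\mu_{\Bohr(\Gamma_0,\rho_0)}*F-F\|_{L^p(\ZN)}\leq\eps$. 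Here the density of $A$ enters polynomially (through $\eps$ and $p$) while $\beta$ and $\gamma$ enter only through $\log\tfrac{2}{\beta\gamma}$; this is exactly the feature a large-spectrum argument lacks, and it is the source of the whole improvement.

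\textbf{The density increment.} We run Sanders' iteration from the proof of Theorem~\ref{thm:sandersAPs}, in the form that keeps the rank of the ambient Bohr set under control, but incrementing only the density of $A$. At stage $j$ one maintains a regular Bohr set $B_j=\Bohr(\Gamma_j,\rho_j)$, a translate $B_j+z_j$, and a density $\alpha_j\geq\alpha$ with $|A\cap(B_j+z_j)|\geq\alpha_j|B_j|$, starting from $\Gamma_0$ as above and $\alpha_0=\alpha$. Running the almost-periodicity input inside $B_j$ and weighing it against the distribution of $A$ on $B_j+z_j$, either $F\geq\alpha/2$ on a translate of a sub-Bohr set of $B_j$ of comparable rank and of radius at least a fixed power of $\rho_j$ --- in which case we stop, having found the configuration sought in the first paragraph --- or $1_A$ correlates, relative to $B_j+z_j$, with one further frequency, which yields $B_{j+1}$ with $|\Gamma_{j+1}|\leq|\Gamma_j|+1$ and $\alpha_{j+1}\geq(1+c)\alpha_j$. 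Since $\alpha_j\leq1$ always, the second alternative happens at most $O(\log\tfrac{2}{\alpha})$ times, so the process halts with a Bohr set of rank $d\leq|\Gamma_0|+O(\log\tfrac{2}{\alpha})\ll\alpha^{-2}\log^4\tfrac{2}{\alpha\beta\gamma}$ whose radius $\rho$ satisfies $\log\tfrac{1}{\rho}\ll\log^{2}\tfrac{2}{\alpha\beta\gamma}$; a short computation shows $\rho\geq N^{-1/(10d)}$ throughout the stated density range, and then the first paragraph concludes, counting lemma included.

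\textbf{The main obstacle} is the density-increment step, and within it the handling of the two passive sparse summands $B$ and $C$: one must check that restricting to a Bohr translate does not wash out their contribution to $F$ --- this is what forces the probability measures $\mu_B,\mu_C$ together with careful Bohr-set regularity, and it is the only point at which the factors $\log\tfrac{2}{\beta\gamma}$, and no worse dependence, enter --- and one must keep the almost-periodicity errors accumulated over the $O(\log\tfrac{2}{\alpha})$ stages below $\alpha/2$. The surrounding bookkeeping --- regular choices of radii, nesting of the Bohr sets, and propagation of $F\geq\alpha/2$ through the restrictions --- is routine, but has to be carried out with the same care as in~\cite{CLS,sandersAPs}.
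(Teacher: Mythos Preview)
Your proposal takes a substantially more complicated route than the paper, and the central step --- the density-increment dichotomy --- is not justified as stated.

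The paper's argument is \emph{not} iterative. The whole point of Theorem~\ref{thm:AP0} is that it is, in the paper's words, ``a simple observation'' following from Croot--Laba--Sisask almost-periodicity in one shot. The key trick you are missing is this: apply almost-periodicity to the \emph{two-fold} convolution $1_A\ast\mu_B$, obtaining a Bohr set $B'$ of rank $\ll\alpha^{-2}(\log\tfrac{2}{\alpha\beta\gamma})^4$ on which $\|(I-\tau_t)1_A\ast\mu_B\|_{L^p}\leq\theta\alpha^{1/p}$. Now the third set $C$ is used to pass from $L^p$ to pointwise: by H\"older,
\[
|1_A\ast\mu_B\ast\mu_C(z)-1_A\ast\mu_B\ast\mu_C(z+t)|\leq\|(I-\tau_t)1_A\ast\mu_B\|_{L^p}\|\mu_C\|_{L^{p'}}\leq\theta\,\gamma^{-1/p},
\]
and choosing $p\asymp\log\gamma^{-1}$ and $\theta\asymp\alpha$ makes the right side $\leq\alpha/2$. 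Since $\E_z\,1_A\ast\mu_B\ast\mu_C(z)=\alpha$, some $z$ has $1_A\ast\mu_B\ast\mu_C(z)\geq\alpha$, and hence $F\geq\alpha/2$ on all of $z+B'$. Dirichlet in the Bohr set finishes. No iteration, no density increment, no restriction of $A$ to sub-Bohr sets.

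Your plan instead applies almost-periodicity to the full triple convolution $F$ and then attempts a Sanders-style iteration. Two problems. First, $L^p$-almost-periodicity of $F$ itself does not give pointwise control of $F$ on a Bohr translate without a further averaging or dualising step, and you have already used up all three sets, so there is nothing left to dualise against; this is precisely why the paper holds $\mu_C$ in reserve. Second, your dichotomy ``either $F\geq\alpha/2$ on a sub-Bohr translate, or $1_A$ correlates with one further frequency'' is asserted, not proved: in the density-increment arguments of \cite{sandersAPs,sandersroth2} the increment comes from a \emph{small count} (e.g.\ $\langle 1_A\ast 1_B\ast 1_C,\mu_{\mathcal U}\rangle$ is small for a level set $\mathcal U$), not from the mere failure of a pointwise lower bound somewhere on the Bohr set. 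Making your dichotomy rigorous would essentially force you through the machinery of Section~\ref{sec:densityincr}, which the paper reserves for the stronger Theorems~\ref{thm:AP1} and~\ref{thm:AP2} --- and which yields different exponents from those claimed in Theorem~\ref{thm:AP0}.
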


While the dependency on densities $\beta$ and $\gamma$
in Theorem~\ref{thm:AP0} is satisfactory, 
the density $\alpha$ is still required to be
at least $(\log N)^{-1/2}$, and the arithmetic
progression is shorter than that of Theorem~\ref{thm:sandersAPs}
when $\alpha=\beta=\gamma$.
To overcome these limitations we turn to the argument of 
Sanders \cite{sandersAPs} to prove Theorem~\ref{thm:sandersAPs}.
The proof there is based on a density-increment strategy,
which builds on that introduced by Bourgain \cite{bourgainroth1} in the
context of Roth's theorem \cite{roth}.
Sanders' recent breakthrough \cite{sandersroth2}
in the latter problem introduced very powerful new techniques,
and these allow us to revisit the argument of \cite{sandersAPs}
so as to obtain the following.

\begin{theorem}
	\label{thm:AP1}
	Suppose that $A,B,C$ are subsets of $\Z/N\Z$ of 
	respective densities $\alpha,\beta,\gamma$.
	Then there exists an absolute constant $c > 0$ such that
	$ A + B + C $ contains an arithmetic progression of length at least
	\begin{align*}
		N^{ c \alpha / \log^5 (2/\alpha\beta\gamma) }
		\quad \text{if} \quad
		\alpha\, \big(\! \log \tfrac{2}{\alpha\beta\gamma} \big)^{-7} 
		\geq (c\log N)^{-1}
	\end{align*}
	such that each element of the progression has at least
	$e^{-(c\alpha)^{-1} \log^7 (2/\alpha\beta\gamma)}  \, N^2$ 
	representations as a sum $x+y+z$ with $(x,y,z) \in A \times B \times C$.
\end{theorem}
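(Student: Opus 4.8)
The plan is to revisit Sanders' density-increment proof of Theorem~\ref{thm:sandersAPs}, replacing the uses of Chang's theorem that control the dimensions of the Bohr sets occurring there by the quantitative Bogolyubov--Ruzsa and Croot--Sisask almost-periodicity estimates of~\cite{sandersroth2}. In this scheme $A$ is distinguished: it is the relative density of $A$ that gets incremented, while $B$ and $C$ are carried along through their Fourier coefficients and are eventually restricted to a small Bohr set. The whole problem first reduces to producing a regular Bohr set $\Bohr(\Gamma,\rho)$ with $|\Gamma|\ll\al^{-1}\log^{5}(2/\al\beta\gamma)$ and $\rho\geq\exp(-\log^{O(1)}(2/\al\beta\gamma))$, together with a shift $t$, such that $1_A*1_B*1_C(x)\geq e^{-(c\al)^{-1}\log^{7}(2/\al\beta\gamma)}\,N^{2}$ for every $x\in\Bohr(\Gamma,\rho)+t$: the standard geometry-of-numbers argument then locates inside $\Bohr(\Gamma,\rho)$ an arithmetic progression of length $\gg\rho\,N^{1/|\Gamma|}/|\Gamma|$, and because the hypothesis $\al(\log(2/\al\beta\gamma))^{-7}\geq(c\log N)^{-1}$ makes $\log(1/\rho)$ and $|\Gamma|$ negligible against $\al\log N/\log^{5}(2/\al\beta\gamma)$, this progression has length at least $N^{c'\al/\log^{5}(2/\al\beta\gamma)}$; each of its elements lies in $\Bohr(\Gamma,\rho)+t$ and so has the asserted number of representations.

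For the main step I would maintain, at stage $j$, a regular Bohr set $B_j=\Bohr(\Gamma_j,\rho_j)$ and a shift $x_j$ on which $A$ has relative density $\al_j\geq\al$, starting from a $\Gamma_0$ that contains the spectra of $1_B$ and $1_C$ at a fixed absolute threshold --- so that, throughout, $B$ and $C$ stay equidistributed in the Bohr sets met, at a cost of only $\ll\log(2/\beta\gamma)$ in dimension --- and $\rho_0\asymp1$. Expanding the triple convolution locally around $x_j$ and removing the zero frequency, either the nonzero frequencies contribute at most $\tfrac12\al\beta\gamma$ times the relative mass of $B_j$, in which case $1_A*1_B*1_C$ is pointwise $\gg\al\beta\gamma\,(|B_j|/N)^{2}N^{2}$ on a translate of $B_j$ and the iteration halts; or they contribute more, in which case discarding the $r\neq0$ with $|\wh{1_A}(r)|$ below a threshold of order $\al(\beta\gamma)^{1/2}$ --- whose joint contribution is negligible by Parseval and the equidistribution of $B$, $C$ --- leaves a set $\Delta$ of large frequencies on which the balanced function of $A$ in $B_j$ has $\ell^{1}$-mass $\gg\al$. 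I then invoke~\cite{sandersroth2} to cover $\Delta$ by a Bohr set of dimension $\ll\log^{O(1)}(2/\al\beta\gamma)$, adjoin its frequencies to $\Gamma_j$, pick a regular radius, and arrive at a Bohr set $B_{j+1}$ on which $A$ has relative density $\al_{j+1}\geq\al_j+c\al$.

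Because the relative density of $A$ cannot exceed $1$ and rises by $\geq c\al$ at each stage, there are at most $O(\al^{-1})$ stages; each adds $\ll\log^{O(1)}(2/\al\beta\gamma)$ to the dimension, so the terminal Bohr set has dimension $\ll\al^{-1}\log^{5}(2/\al\beta\gamma)$ once the exponents of the polylog factors are balanced, and the accumulated radius losses (together with the regularity adjustments and the cost of keeping $B$, $C$ equidistributed) leave $\rho\geq\exp(-\log^{O(1)}(2/\al\beta\gamma))$, independently of $N$. Feeding the terminal Bohr set into the reduction of the first paragraph produces the progression, and carrying the volume factor $(|B|/N)^{2}$ --- one power for $B$ and one for $C$, reflecting that both were restricted --- through the halting inequality turns $1_A*1_B*1_C\gg\al\beta\gamma\,(|B|/N)^{2}N^{2}$ into exactly the claimed count $e^{-(c\al)^{-1}\log^{7}(2/\al\beta\gamma)}N^{2}$ of representations; the computations are of the same flavour as, though heavier than, those underlying Theorem~\ref{thm:AP0}.

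The crux --- and where~\cite{sandersroth2} is indispensable --- is the non-halting case: obtaining a density increment for $A$ on a Bohr set whose dimension grows by only $\log^{O(1)}(2/\al\beta\gamma)$ per stage. Covering the large-spectrum set $\Delta$ by Chang's theorem alone would cost a factor $(\beta\gamma)^{-1}$ in dimension, which is fatal once $\beta,\gamma$ are small; one must instead route the increment through the almost-periodicity and quantitative Bogolyubov--Ruzsa machinery, arranging that the possibly tiny densities $\beta$ and $\gamma$ enter only through $\log(2/\beta\gamma)$ and that $B$, $C$ stay well distributed in every Bohr set encountered. The remaining bookkeeping --- choosing regular radii, bounding the cumulative radius loss over the $\sim\al^{-1}$ iterations so that $\rho$ remains $N$-independent, and propagating the lower bound on the number of representations $x+y+z\in A\times B\times C$ through every stage --- is routine but must be carried out with care.
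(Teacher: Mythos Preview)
Your overall architecture is right --- run a density increment until a Bohr set of rank $\ll\alpha^{-1}(\log 2\wt\alpha^{-1})^5$ sits inside an upper level set of $1_A*1_B*1_C$, then extract the progression via Lemma~\ref{thm:APinbohrset} --- but the iteration you describe differs from the paper's and, as stated, has a gap at the decisive step. You localize only $A$, keep $B,C$ global by annihilating their large spectra once, and claim at each stage an \emph{additive} increment $\alpha_{j+1}\geq\alpha_j+c\alpha$ at a dimension cost of only $\log^{O(1)}(2/\wt\alpha)$, so that $O(\alpha^{-1})$ steps give the target total. The trouble is the per-step bound: there is no result in~\cite{sandersroth2} that ``covers $\Delta$ by a Bohr set of dimension $\ll\log^{O(1)}(2/\wt\alpha)$'' when $\Delta$ is a spectrum of $A$ at threshold $\asymp\alpha(\beta\gamma)^{1/2}$. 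Chang alone gives $\eta^{-2}\log(1/\alpha)\asymp\alpha^{-2}(\beta\gamma)^{-1}\log(1/\alpha)$, which you correctly flag as fatal; but routing through Katz--Koester and Croot--Sisask does not remove the $\alpha^{-1}$ from the per-step cost either, because the Katz--Koester output set $S$ has density only $\geq e^{-C\alpha^{-1}\log(\cdot)}$, and that $\alpha^{-1}$ propagates through Croot--Sisask into $\log(1/\tau)$ and hence into the rank of the annihilating Bohr set. So the bookkeeping $O(\alpha^{-1})\times\log^{O(1)}$ is not what these tools deliver.

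The paper's iteration is organized differently. All three sets are localized to the current regular Bohr set $B$, and a translate $A_3'$ of $A_3$ is placed in a sub-Bohr set $B'=B_\rho$. One then applies the Katz--Koester transform (Lemma~\ref{thm:KK2sets}) to the pair $(A_1,A_3')$ --- both living in \emph{nested} Bohr sets, which the lemma requires --- obtaining either a multiplicative increment for $A_1$, or a thick $L\subset B$ and an $S\subset B''$ with $1_L*1_S\ll\alpha_1^{-1}\,1_{A_1}*1_{A_3'}$. In the second branch Croot--Sisask (Lemma~\ref{thm:CSsmoothing}) smooths $1_L*\mu_S$ by $\lambda_X^{(\ell)}$ with $X$ of density $\tau\geq e^{-C\alpha_1^{-1}(\log 2\wt\alpha^{-1})^4}$, and the smallness of $\langle 1_L*\mu_S*\lambda_X^{(\ell)},\,1_{-A_2}*\mu_{\mathcal U}\rangle$ forces $\sum_{\gamma\in\Spec_{1/2}(\mu_X)}|\wh{f_{A_2}}(\gamma)|^2\gg\alpha_2^2\,b$, giving a \emph{multiplicative} increment $(1+c)\alpha_i$ for some $i\in\{1,2\}$ at dimension cost $\ll\alpha_1^{-1}(\log 2\wt\alpha^{-1})^4$. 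The iteration thus terminates after only $O(\log 2\wt\alpha^{-1})$ steps, and $O(\log 2\wt\alpha^{-1})\times\alpha_1^{-1}(\log 2\wt\alpha^{-1})^4$ produces the same total rank --- the $\alpha_1^{-1}$ sits in the per-step cost, not in the number of steps. Note in particular that $A_3$ is used \emph{inside} the Katz--Koester step rather than being carried along spectrally; the asymmetry in the argument is between $\{A_1,A_3\}$ (transformed) and $A_2$ (incremented), not between $A$ and $\{B,C\}$ as in your sketch.
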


Note that the density of each set may now be as
low as $(\log N)^{-1+o(1)}$,
and that we may take two sets to be very sparse as before.
A result of this kind also follows from Theorem~\ref{thm:CLS}, 
since an arithmetic progression
in $A+B$ is always contained, up to translation, in $A+B+C$;
however the arithmetic progression obtained in this way 
is shorter than the one given by Theorem~\ref{thm:AP1},
unless $\gamma$ is extremely small compared with $\alpha$ and $\beta$, 
for example, when $\alpha \asymp \beta \asymp (\log N)^{-\eps}$ and 
$\gamma \asymp e^{-C(\log N)^{(1-\eps)/7}}$.
Surprisingly, the counting lemma of Theorem~\ref{thm:AP1} 
is quite a lot weaker than that of Theorem~\ref{thm:AP0}: 
this is due to the use of an iterative
argument which at each step 
places the sets $A,B,C$ in a certain Bohr set, 
whose size decreases as we iterate.

By using a generalization by Bloom~\cite{bloom} 
of the Katz-Koester transform of 
Sanders \cite{sandersroth2} to three or more sets,
we are able to go one step further in the range
of density; however, this time the
loss in the counting lemma is substantial.

\begin{theorem}
	\label{thm:AP2}
	Let $\eps \in (0,1)$ be a parameter and
	suppose that $A,B,C$ are subsets of $\ZN$ of 
	respective densities $\alpha,\beta,\gamma$.
	Then there exists an absolute constant $c > 0$ such that
	$ A + B + C $ contains an arithmetic progression of length at least
	\begin{align*}
		\exp\Big( c \alpha^{1/4} (\eps\log N)^{1/2} 
		\big(\! \log \tfrac{2}{\alpha\beta\gamma} \big)^{-7/2} \Big)
		\quad \text{if} \quad
		\alpha \big(\! \log \tfrac{2}{\alpha\beta\gamma} \big)^{-14} 
		\geq (c\eps\log N)^{-2}
	\end{align*}
	such that each element of the progression has at least
	$N^{ 2 - \eps }$ representations as a sum
	$x+y+z$ with $(x,y,z) \in A \times B \times C$.
\end{theorem}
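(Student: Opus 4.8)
The plan is to run the same density-increment scheme that underlies Theorem \ref{thm:AP1}, but to replace the Katz--Koester step there by Bloom's three-set generalization \cite{bloom} of the Katz--Koester transform, which yields a quantitatively stronger increment at the cost of only controlling elements of $A+B+C$ that have many representations up to a power savings $N^{2-\eps}$. First I would set up the Fourier-analytic framework on a Bohr set $B(\Gamma,\rho)$, working with the balanced functions $f_A, f_B, f_C$ and the counting operator $T(x) = \sum_{y+z=x} f_B(y) f_C(z)$; the goal at each stage is to show that either $A+B+C$ already contains a long progression with the required representation count, or else one of the sets admits a density increment on a slightly smaller Bohr set. The key input is that Sanders' Roth machinery \cite{sandersroth2}, as refined for Bohr sets, localizes the large spectrum of $f_A$ inside a Bohr set of rank $O(\alpha^{-1}\log^{O(1)}(2/\alpha\beta\gamma))$, and that Bloom's transform then upgrades a naive $L^2$ bound on $T$ into a pointwise-on-a-progression lower bound of the shape $c\,\alpha\beta\gamma N^2$ on the Bohr set, provided we discard the points with fewer than $N^{2-\eps}$ representations --- these are few because the second moment of $T$ is $O(\beta\gamma N^3)$ and hence only $O(N^{2+2\eps}\cdot N/N^{2\eps}) = O(N^{3-2\eps}\cdot N^{2\eps}/\ldots)$ of them can be large, a routine Chebyshev count that I would not grind through here.

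The iteration is organized so that the rank of the ambient Bohr set grows, and its radius shrinks, in a controlled way: after $k$ steps the rank is $O(\alpha^{-1}\log^{O(1)}(2/\alpha\beta\gamma))$ uniformly (this is the whole point of using Sanders' bounds rather than Bourgain's), and the radius is at least $\rho_0^{k}$ for a suitable $\rho_0$ depending polynomially on $\alpha$ and the logarithms. The process must terminate after $O(\log(2/\alpha\beta\gamma))$ steps because the density of (a translate of) $A$ on the current Bohr set cannot exceed $1$; at termination the counting operator does not behave like the random model, which --- via the standard ``arithmetic progressions inside Bohr sets of bounded rank'' lemma (a long progression of length $\geq (\rho/d)^{?}\,|B(\Gamma,\rho)|^{1/d}$, hence of length $\exp(c\,d^{-1}\log N)$ with $d$ the final rank) --- forces the stated progression inside $A+B+C$. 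Feeding $d = O(\alpha^{-1}\log^{O(1)}(2/\alpha\beta\gamma))$ and the accumulated radius loss into that lemma produces a progression of length roughly $\exp\big(c\,\alpha\,(\log N)/\log^{O(1)}(2/\alpha\beta\gamma)\big)$ in the ``dense'' regime; the factor $\alpha^{1/4}(\eps\log N)^{1/2}$ and the weaker exponent in the final bound arise because Bloom's transform, applied to three sets and with the $\eps$-truncation, only gives a density increment by a factor $1 + c\,\eps\alpha^{?}/\log^{O(1)}(\cdots)$ rather than a multiplicative doubling, so each step gains less and the radius shrinks by a larger power of $\rho_0$ --- balancing ``number of steps $\times$ radius loss per step'' against ``rank'' is what yields the exponent $1/2$ in $\log N$ and the $\alpha^{1/4}$, and is the place where the $\eps$ enters both the exponent and the density hypothesis $\alpha(\log\tfrac{2}{\alpha\beta\gamma})^{-14}\geq(c\eps\log N)^{-2}$.

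The main obstacle, I expect, is bookkeeping the counting lemma through the iteration: each density-increment step passes to a Bohr set and replaces $A,B,C$ by their restrictions, and one must track how a point having $\geq N^{2-\eps}$ representations \emph{within the Bohr set} relates to representations in the original $\ZN$; the $\eps$-budget has to be apportioned across all $O(\log(2/\alpha\beta\gamma))$ steps, which is exactly why the final guarantee is a fixed power savings $N^{2-\eps}$ rather than the near-full count $\tfrac12\alpha\beta\gamma N^2$ of Theorem \ref{thm:AP0} --- there simply is not enough room to preserve a constant-factor-of-random count through a long iteration. The secondary technical point is verifying that Bloom's three-set Katz--Koester estimate is compatible with the Bohr-set localization coming from \cite{sandersroth2}; I would handle this by running Bloom's argument relative to the Bohr set throughout, rather than trying to patch a global estimate into a localized scheme. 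Once these two points are pinned down, the remaining computations --- optimizing $\rho_0$, counting steps, and invoking the progressions-in-Bohr-sets lemma --- are routine and follow the template of the proof of Theorem \ref{thm:AP1}.
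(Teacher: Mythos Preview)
Your proposal has the right cast of characters but misidentifies the mechanism at the crucial step, and the explanation you give for the exponents $\alpha^{1/4}$ and $(\log N)^{1/2}$ is incorrect.

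First, the density increment in the paper is still a constant factor $(1+c)$, exactly as in Theorem~\ref{thm:AP1}; it is \emph{not} weakened to $1 + c\eps\alpha^{?}/\log^{O(1)}(\cdots)$. The number of iterations is therefore again $O(\log 2\wt{\alpha}^{-1})$, and no ``balancing of steps against radius loss'' in your sense produces the $(\log N)^{1/2}$. Second, the three-set Katz--Koester transform is not applied to $A,B,C$. It is applied to $A_1$, to a scaled copy $A'_3$ of $A_3$, and to the \emph{lower level set} $\U = \{ 1_{A_1}\ast 1_{A_2}\ast 1_{A_3} \leq \omega b^2 \} \cap B'$. There is no second-moment/Chebyshev count discarding points with few representations; instead the dichotomy is: either $\U$ has relative density $< v$ in $B'$ (so the upper level set is $(1-v)$-thick in $B'$ and Sanders' Lemma~\ref{thm:APinbohrthickset} gives an arithmetic progression of length $4v^{-1}$ directly), or $\U$ has density $\geq v$ and can be fed into Bloom's transform with $A_1$ and $A'_3$.

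This is exactly where the exponents come from. Feeding $\U$ into the transform costs an extra factor $\log 2v^{-1}$ in the dimension increment, so after the iteration the Bohr-set rank is of order $\alpha_1^{-1/2}(\log 2v^{-1})(\log 2\wt{\alpha}^{-1})^5$ rather than the $\alpha_1^{-1}(\log 2\wt{\alpha}^{-1})^5$ of Theorem~\ref{thm:AP1}. The progression one obtains has length $v^{-1}$, subject to the constraint $v^{-1} \leq c\delta' N^{1/d}/d$ from Lemma~\ref{thm:APinbohrthickset}; optimizing $v$ against this (a quadratic in $\log 2v^{-1}$) yields $\log 2v^{-1} \asymp \alpha_1^{1/4}(\eps\log N)^{1/2}(\log 2\wt{\alpha}^{-1})^{-7/2}$, which is the stated bound. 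The parameter $\eps$ enters only through the choice of the level $\omega$, which must satisfy $\omega b_i^2 \geq N^{-\eps}$ at every step; this constrains $\omega$ in terms of $d_i$ and hence in terms of $v$, and is what ties $\eps$ to the density hypothesis. Your bookkeeping worry about propagating representation counts through restrictions is handled simply by this single parameter $\omega$ and the size bounds on $b_i$.
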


Note that the progression obtained in this way is in fact
longer than that of Theorem~\ref{thm:AP1} in the range 
$(\log N)^{-1+o(1)} \leq \alpha \leq (\log N)^{-2/3+o(1)}$
when, say, $\alpha=\beta=\gamma$ and $\eps \asymp 1$.
Finally, we mention two applications of the above results 
to the analogous problem in the primes.
First, since Theorem~\ref{thm:AP1} comes with a counting lemma, 
its conclusion may be inserted into
the original argument of Cui, Li and Xue \cite{CLX} to derive 
two new estimates, which complement Theorem~\ref{thm:CLX}.

\begin{theorem}
	\label{thm:APprime}
	Suppose that $A$ is a subset of the primes less than $N$
	of size $\alpha N / \log N$.
	Then there exist absolute positive constants $c,c_2,c_3$ such that
	$A+A+A$ contains an arithmetic progression of length at least
	\begin{align*}
		\begin{array}{ll}
			N^{ c \alpha / (\log 2\alpha^{-1})^5 }
			&\text{if} \quad
			\alpha \geq (\log_4 N)^{-c_2}, \\
			N^{ c \alpha^2 / (\log 2\alpha^{-1})^5 }
			&\text{if} \quad
			\alpha \geq (\log_2 N)^{-c_3}.
		\end{array}
	\end{align*}
\end{theorem}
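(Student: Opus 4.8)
The plan is to follow the argument of Cui, Li and Xue~\cite{CLX} essentially verbatim, substituting our Theorem~\ref{thm:AP1} for the form of Green's theorem on $A+A+A$ used there. The improvement in the density dependence of Theorem~\ref{thm:AP1} over \cite{greenAPs} is exactly what upgrades the exponents $\alpha^2,\alpha^4$ of Theorem~\ref{thm:CLX} to the exponents $\alpha,\alpha^2$ claimed here; the only thing to verify is that the (exponentially weak) counting lemma of Theorem~\ref{thm:AP1} is still strong enough to survive the transference, and it is this verification that pins down the admissible ranges of $\alpha$.

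First I would run the $W$-trick. Fix a slowly growing parameter $w = w(N)$, set $W = \prod_{p \leq w} p$, and use the Siegel--Walfisz theorem to select a reduced residue class $b \bmod W$ along which $A$ is densest. Rescaling $n \mapsto Wn + b$ then produces, for a prime $N' \asymp N/W$, a set $A' \subseteq \Z/N'\Z$ whose normalised indicator has mean $\asymp \alpha$ and is dominated pointwise by a pseudorandom majorant $\nu$ of mean $\approx 1$ (a normalised von Mangoldt weight on the residue class), with $\widehat{\nu}$ uniformly close to $\widehat{1}$, the quality of the approximation improving as $w \to \infty$. Next I would invoke the restriction theorem for the primes to control $\widehat{\nu - 1}$ in an $\ell^q$ sense --- in the robust form of Green~\cite{greenrestriction} to obtain the second case, and in the refined form of Helfgott and de Roton~\cite{HRrestriction} to obtain the first --- and apply Green's dense-model principle to replace $A'$ by a genuinely dense set $\wt A \subseteq \Z/N'\Z$ of density $\rho$, where $\rho \asymp \alpha$ when the refined restriction estimate is used and $\rho$ is a fixed smaller power of $\alpha$ (namely $\asymp \alpha^2$, matching the passage from $\alpha^2$ to $\alpha^4$ in \cite{CLX}) when only the robust estimate is available, and where $\widehat{1_{\wt A}}$ agrees with the suitably normalised $\widehat{1_{A'}}$ off a set of frequencies of controlled measure, up to an $\ell^q$-bounded error.

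Now apply Theorem~\ref{thm:AP1} with $A = B = C = \wt A$ in $\Z/N'\Z$: it yields an arithmetic progression $P \subseteq \wt A + \wt A + \wt A$ of length at least $(N')^{c\rho/\log^5(2/\rho)}$, each of whose elements has at least $\delta (N')^2$ representations as a threefold sum from $\wt A$, with $\delta = e^{-(c\rho)^{-1}\log^7(2/\rho)}$. Transferring back, the triple convolution $1_{\wt A}\!*\!1_{\wt A}\!*\!1_{\wt A}$ differs pointwise from the corresponding triple convolution built from the normalised $1_{A'}$ by an amount that is $\ell^q$-controlled by $\widehat{\nu-1}$; as long as this error is smaller than $\delta (N')^2$ times the relevant normalisation, every point of $P$ has positive $1_{A'}\!*\!1_{A'}\!*\!1_{A'}$, hence lies in $A'+A'+A'$. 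Undoing the rescaling and passing from $\Z/N'\Z$ back to $\Z$ --- a short progression modulo $N'$ being a bounded union of genuine integer progressions, and $x \mapsto Wx + 3b$ mapping $A'+A'+A'$ into $A+A+A$ --- converts $P$ into an honest progression of comparable length (since $\log N' \asymp \log N$) inside $A+A+A \subseteq \Z$. Finally, the requirement that the transference error beat $\delta(N')^2$ forces $w$ to grow; since $\delta$ is exponentially small in $\rho^{-1}\log^{O(1)}(2/\rho)$ this is a genuine constraint, and the corresponding shrinkage of $N' = N/W$ is what produces the thresholds $\alpha \geq (\log_4 N)^{-c_2}$ and $\alpha \geq (\log_2 N)^{-c_3}$. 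One also checks en route that Theorem~\ref{thm:AP1} is applied within its stated range for $M = N'$ and $\alpha = \beta = \gamma = \rho$, which is automatic once those constraints hold.

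The step I expect to be the main obstacle --- and the one responsible for the extra iterated logarithm compared with \cite{CLX}, where the thresholds are $\log_3 N$ and $\log N$ --- is reconciling the transference machinery of \cite{CLX}, which was calibrated against Green's \emph{polynomial} counting lemma for $A+A+A$, with the \emph{exponentially small} counting lemma $\delta = e^{-(c\rho)^{-1}\log^7(2/\rho)}$ furnished by Theorem~\ref{thm:AP1}. Driving the pseudorandomness error below this much smaller threshold demands a larger $W$-trick modulus than in \cite{CLX}, and carefully tracking how this propagates through the Siegel--Walfisz range, the restriction estimates, and the lifting of the progression back to $\Z$ --- all otherwise routine and taken over from \cite{CLX} and \cite{greenAPs} --- is where the care is needed.
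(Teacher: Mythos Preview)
Your proposal is correct and follows essentially the same approach as the paper. The paper packages the Cui--Li--Xue machinery (the $W$-trick, the two restriction theorems, and the transference) into two black-box propositions extracted from~\cite{CLX} that produce a dense set $A_1$ of density $\gg\alpha$ or $\gg\alpha^2$ together with a pointwise comparison between the triple convolutions up to an error $O(\eps+\delta^{1/2})$, and then applies Theorem~\ref{thm:AP1} and chooses $\eps=\delta=\exp(-C'\alpha^{-1}(\log 2\alpha^{-1})^7)$ to beat that error; your sketch simply unpacks those propositions and arrives at the same thresholds for the same reason.
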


Secondly, Theorem~\ref{thm:AP2}, owing to its longer density
range, allows us to find long arithmetic progressions 
in $A+A+A$ for a dense subset $A$ of the primes 
on grounds of density alone,
that is, without appealing to restriction theorems for the primes.
This is mostly of conceptual interest, since our argument
is also quite involved, relying heavily 
on methods from \cite{sandersroth2}.
We record below the estimate that might be obtained 
from Theorem~\ref{thm:AP2}, by observing that the primes have 
asymptotic density $(\log N)^{-1}$ in the first $N$ integers
and with the usual Freiman embedding.

\begin{corollary}
	\label{thm:corprime}
	Suppose that $A$ is a subset of the primes less than $N$
	of size $\alpha N / \log N$.
	Then there exists an absolute positive constant $c$ such that
	$A+A+A$ contains an arithmetic progression of length at least
	\begin{align*}
		e^{ c (\alpha\log N)^{1/4} (\log \log N)^{-7/2} } 			
		\quad\text{if}\quad
		\alpha \geq (\log N)^{-1} (\log\log N)^{14}.
	\end{align*}
\end{corollary}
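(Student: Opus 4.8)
The plan is to derive this corollary as a direct specialization of Theorem~\ref{thm:AP2} to the case where $A=B=C$ is the image of a subset of the primes under a Freiman isomorphism, choosing the parameter $\eps$ optimally. First I would recall that the primes up to $N$ have asymptotic density $(\log N)^{-1}$, so a subset $A$ of size $\alpha N/\log N$ has density $\asymp \alpha/\log N$ inside $\{1,\dots,N\}$. Via the standard Freiman embedding (see e.g. the argument in \cite{CLX}), one replaces $\{1,\dots,N\}$ by a cyclic group $\Z/N'\Z$ with $N' \asymp N$ in which three-term sums $x+y+z$ with $x,y,z$ in the relevant range are preserved, so that $A$ becomes a subset of $\Z/N'\Z$ of density $\alpha' \asymp \alpha/\log N$, and an arithmetic progression in $A+A+A$ inside $\Z/N'\Z$ lifts back to a genuine arithmetic progression in $A+A+A$ among the integers of comparable length. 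Thus it suffices to apply Theorem~\ref{thm:AP2} with $\alpha=\beta=\gamma=\alpha'$.

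With this substitution, $\log\tfrac{2}{\alpha\beta\gamma} = \log\tfrac{2}{(\alpha')^3} \asymp \log\tfrac{2\log N}{\alpha}$, which under the hypothesis $\alpha \geq (\log N)^{-1}(\log\log N)^{14}$ is $\asymp \log\log N$ up to absolute constants. The progression length from Theorem~\ref{thm:AP2} becomes
\begin{align*}
	\exp\!\Big( c\, (\alpha')^{1/4} (\eps \log N')^{1/2} (\log\log N)^{-7/2} \Big)
	\asymp \exp\!\Big( c\, (\alpha/\log N)^{1/4} (\eps\log N)^{1/2} (\log\log N)^{-7/2} \Big),
\end{align*}
and I would simply take $\eps = 1/2$ (or any fixed constant in $(0,1)$), absorbing it into $c$; this yields the claimed bound $e^{c(\alpha\log N)^{1/4}(\log\log N)^{-7/2}}$ after rewriting $(\alpha/\log N)^{1/4}(\log N)^{1/2} = (\alpha\log N)^{1/4}$. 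It then remains to check that the density hypothesis of Theorem~\ref{thm:AP2}, namely $\alpha'\,(\log\tfrac{2}{(\alpha')^3})^{-14} \geq (c\eps\log N')^{-2}$, follows from $\alpha \geq (\log N)^{-1}(\log\log N)^{14}$: indeed the left side is $\asymp (\alpha/\log N)(\log\log N)^{-14}$, which is $\geq (\log N)^{-2}$ precisely when $\alpha \geq (\log N)^{-1}(\log\log N)^{14}$, and $(c\eps\log N')^{-2} \asymp (\log N)^{-2}$ for fixed $\eps$, so the constant $c$ can be adjusted to make this work.

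There is essentially no serious obstacle here; the corollary is labelled as being "mostly of conceptual interest" precisely because it is a mechanical consequence of Theorem~\ref{thm:AP2}. The only point requiring a little care is the bookkeeping of absolute constants through the Freiman embedding — one must ensure that passing from the integers to $\Z/N'\Z$ and back costs only constant factors in $N$ and in the progression length, and that the representation count $N'^{2-\eps}$ guaranteed by Theorem~\ref{thm:AP2} (which is not needed for the statement of the corollary) does not interfere. Since the statement of Corollary~\ref{thm:corprime} carries no counting lemma, one may discard that information freely. I would therefore present the proof as a short paragraph: set up the embedding, substitute $\alpha' \asymp \alpha/\log N$ and $\eps \asymp 1$ into Theorem~\ref{thm:AP2}, verify the density condition, and simplify the resulting exponent.
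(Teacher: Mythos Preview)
Your proposal is correct and matches the paper's own proof essentially line for line: the paper embeds $A$ into $\Z/6N\Z$ via the natural projection (a Freiman $3$-isomorphism preserving arithmetic progressions), applies Theorem~\ref{thm:AP2} with $A=B=C=A'$ and $\eps=\tfrac{1}{2}$, and pulls back. Your bookkeeping of the density condition and the simplification of the exponent are exactly what is needed.
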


By comparison, the constant $c_1$ in Theorem~\ref{thm:CLX}
is $\tfrac{1}{45}$ in the original argument of \cite{CLX}.
The arithmetic progression given by this corollary
is, however, shorter than that of
Theorems~\ref{thm:CLX} and \ref{thm:APprime}
in the ranges prescribed there.

We make two last remarks about the shape of the above bounds.
The first is that in Theorems~\ref{thm:AP0}, \ref{thm:AP1}
and \ref{thm:AP2}, one may assume $\alpha \geq \beta \geq \gamma$
without loss of generality, and that under this assumption one may 
replace logarithmic terms $\log \tfrac{2}{\alpha\beta\gamma}$ 
by $\log \tfrac{2}{\beta\gamma}$ there. 
Secondly, we note that Theorems~\ref{thm:AP0}--\ref{thm:APprime}
and Corollary~\ref{thm:corprime} are nontrivial if and only 
if $N$ is larger than an absolute constant.

At this point we should also remark that arithmetic progressions 
may be obtained for sets much sparser than the ones considered above
by a combinatorial method of Croot, Ruzsa and Schoen \cite{sparse1},
recently generalized in \cite{sparse2},
although the results there take a rather different form.
Indeed, while the Fourier analytic methods used here typically 
find progressions of length $e^{(\log N)^c}$ in a range 
of density $\alpha \geq (\log N)^{-\delta}$,
these combinatorial methods produce shorter progressions, of 
size $(\log N)^c$, for a larger range of density 
$\alpha \geq N^{-\delta}$.

The article is now organized as follows.
Section~\ref{sec:notation} is devoted to notation
and Section~\ref{sec:bohrsets} is there to recall 
relevant facts about Bohr sets.
The proof of Theorem~\ref{thm:AP0} is given in Section~\ref{sec:CLS},
and in Section~\ref{sec:densityincr} we collect a number of facts 
on the density-increment strategy which are then used to give
the proof of Theorems~\ref{thm:AP1} and \ref{thm:AP2}
in Section~\ref{sec:APdensityincr}.
Finally, the estimates of Theorem~\ref{thm:APprime} 
and Corollary~\ref{thm:corprime} are
derived in Section~\ref{sec:APprime}, 
and comparisons with results on Roth's theorem 
are drawn in Section~\ref{sec:conclusion}.

\smallskip

\textbf{Acknowledgements.}
We should like to thank our supervisors 
Régis de la Bretèche and
Andrew Granville for discussions that
greatly helped improve the exposition
in this paper,
and we also thank Tom Sanders for many 
helpful comments.

\smallskip

\textbf{Funding.}
This work was supported by a \textit{contrat doctoral}
from Université Paris $7$.

\section{Notation}
\label{sec:notation}

Here we take a moment to introduce our notation.
It is mostly standard up to the choice of normalizations.

\textit{General setting.}
For the rest of the article we fix an integer $N \geq 2$ 
and we write $G=\ZN$.
It is clear, however, that our results are only meaningful
when densities vary with $N$ and when $N$ is large: one should
think of $N$ as such.

\textit{Functions}.
For a subset $X$ of $G$ and $x \in G$, we
define the averaging operator over $X$, and the operator of translation
by $x$ on functions $f : G \rightarrow \C$, respectively, by
\begin{align*}
	\E_{x \in X} f(x) = \frac{1}{|X|} \sum_{x \in X} f(x)  
	\quad\text{and}\quad
	\tau_x (f)(u) = f(u+x) \quad\text{for}\  u \in G.
\end{align*}
We also occasionally use the identity operator $I$ defined by $If=f$.
For any $p \geq 1$, we define the $L^p$-norm of a function $f$ on $G$ by
\begin{align*}
	\| f \|_{L^p} = \Big( \E_{x \in G} |f(x)|^p \Big)^{1/p} .
\end{align*}
We let $\| f \|_{\infty} = \sup_{x \in G} |f(x)|$ 
denote the uniform norm of $f$ over $G$.
The scalar product and the convolution 
of two functions $f, g$ are defined, respectively, by
\begin{align*}
	&&\langle f , g \rangle_{L^2} &= \E_{x \in G} f(x) \overline{g(x)} &&\\
	&\phantom{(x \in G)} &
	\text{and}\quad
	f \ast g (x) &= \E_{y \in G} f(y) g(x-y) 
	&&(x \in G).
\end{align*}
We also let $f^{(\ell)} = f \ast \dotsb \ast f$ 
denote the convolution of $f$ with itself $\ell$ times.

\textit{Fourier analysis on $\ZN$}.
We let $\wh{G}$ denote the dual group of $G$, that is,
the set of homomorphisms $\gamma : G \rightarrow \mathbb{U}$,
where $\mathbb{U}$ denotes the unit circle 
$\{ \omega \in \C : |\omega| = 1 \}$.
We define the Fourier transform $\wh{f}$ of 
a function $f : G \rightarrow \C$ by
\begin{align*}
	&\phantom{(\gamma \in \wh{G})} &
	\wh{f}(\gamma) &\coloneqq \E_{x\in G} f(x) \overline{\gamma(x)}
	&&(\gamma \in \wh{G}).
\end{align*}
The three basic formul\ae\ of Fourier analysis 
then read as follows:
\begin{align*}
	&\text{(Inversion)} &
	f(x) &= {\textstyle \sum_{\gamma \in \wh{G}} \wh{f}(\gamma) \gamma(x)}, \\
	&\text{(Parseval)} &
	{\textstyle \langle f,g \rangle_{L^2} } 
	&= {\textstyle \sum_{\gamma \in \wh{G}} f(\gamma) \overline{g(\gamma)} }, \\	
	&\text{(Convolution)} &
	\smash{ \wh{f \ast g} (\gamma) } 
	&= \textstyle{ \wh{f}(\gamma) \wh{g}(\gamma) } .
\end{align*}
For functions $g,h \colon \wh{G} \rightarrow \C$ we also write
\begin{align*}
	\| g \|_{\ell^p} 
	= \Big( \sum_{\gamma \in \wh{G}} |\wh{g}(\gamma)|^p \Big)^{1/p} 
	\quad\text{and}\quad
	\langle g, h \rangle_{\ell^2} 
	= \sum_{\gamma \in \wh{G}} g(\gamma) \overline{h(\gamma)}.
\end{align*}
Finally, for a real number $\eta > 0$
we define the $\eta$-spectrum of a function
$f\colon G \rightarrow \C$ by 
\begin{align*}
	\Spec_\eta ( f ) = 
	\{ \gamma \in \wh{G} \,:\, |\wh{f}(\gamma)| \geq \eta \| f \|_{L^1} \}.
\end{align*}

\textit{Characteristic functions and densities}.
We let $m_G$ denote 
the uniform measure on $G$ defined
by $m_G(X) = |X|/|G|$ for $X \subset G$.
More generally, when $A$ is a subset of $G$,
we let $m_A$ denote the uniform 
measure on $A$ defined by $m_A(X) = |X \cap A|/|A|$
for $X \subset G$.
We also define the normalized characteristic 
function of a subset $A$ of $G$ by 
\begin{align*}
	\mu_A = m_G(A)^{-1} \, 1_A
\end{align*} so that $\|\mu_A\|_{L^1}=1$;
note also the useful identity  $1_A \ast \mu_B (x) = m_{-B} (A-x)$.
When $B$ is a subset of $G$
we say that $A \subset B$ has relative density
$\alpha$ when $|A|=\alpha |B|$, that is,
when $m_B(A) = \alpha$.
Note the composition identity $m_G(A) = m_B(A) m_G(B)$.

\textit{Asymptotic notation.}
We let $c$ and $C$ denote absolute positive constants 
which may take different values at each occurrence.
We also make occasional use of Landau's and Vinogradov's 
asymptotic notation:
for two nonnegative functions $f$ and $g$,
we let $f = O(g)$ or $f \ll g$ indicate the fact the $f \leq C g$
for some constant $C > 0$, and $f = \Omega(g)$ or $f \gg g$ indicate
that $f \geq cg$ for some constant $c > 0$.
We write $f \asymp g$ when $f \ll g$ and $f \gg g$.

\section{Preliminaries on Bohr sets}
\label{sec:bohrsets}

Bohr sets are now a standard tool of additive combinatorics.
The definition and terminology we use
follows Sanders \cite{sandersroth2,sandersroth1}.
We also recall the fundamental properties of these sets 
which will be needed for our work.

\begin{definition}[Bohr set]
	\label{thm:bohsetdef}
	For a set of characters $\Gamma \subset \wh{G}$ and
	a real number $\delta > 0$, we let
	\begin{align*}
		B(\Gamma,\delta) = \{ x \in G \,:\, |1-\gamma(x)| 
		\leq \delta \ \ \forall \gamma \in \Gamma \}
	\end{align*}
	be the Bohr set of \textit{frequency set} $\Gamma$ and 
	\textit{radius} $\delta$.
	We define $d=|\Gamma|$ to be the \textit{dimension} of this Bohr set.
\end{definition}

Note that $|\gamma(x)|=1$ 
and therefore $|1-\gamma(x)| \leq 2$
for every $x \in G$ and $\gamma \in \wh{G}$,
so that the definition is only 
interesting for $\delta \leq 2$.
We will often denote a Bohr set simply by the letter $B$,
with associated parameters $\Gamma,\delta,d$.
There is a slight abuse of notation in doing so, as
the physical set $B$ may be the same for different frequency sets and radii:
one should formally think of $B$ as a triple $(B,\Gamma,\delta)$.
We also define the \textit{dilate} of $B$ by a factor $\rho$ by 
$B_\rho = B(\Gamma,\delta)_{\rho} \coloneqq B( \Gamma, \rho \delta )$.
Finally we say that $B'$ is a \textit{sub-Bohr set} of $B$, 
and we write $B' \leq B$,
when $\Gamma \subset \Gamma'$ and $\delta' \leq \delta$.

We now recall a standard bound on the growth of Bohr sets
which is proven in \cite[Lemma 4.20]{taovu},
albeit with a slightly different notion of Bohr set.
We indicate below the minor changes to the proof
needed to recover the following.

\begin{lemma}[Doubling ratio of Bohr sets]
	\label{thm:bohrsetdoubling}
	Suppose that $B$ is a Bohr set. Then
	\begin{align*}
		m_G(B_{1/2}) \geq 7^{-d} m_G(B).
	\end{align*}
\end{lemma}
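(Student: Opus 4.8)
The plan is to reduce the inequality $m_G(B_{1/2}) \geq 7^{-d} m_G(B)$ to a counting argument on a covering of $B$ by translates of $B_{1/2}$. The starting observation is the standard fact that $B_{1/4} + B_{1/4} \subset B_{1/2}$: if $x,y \in B_{1/4}$ then for each $\gamma \in \Gamma$ one has $|1 - \gamma(x+y)| \leq |1-\gamma(x)| + |\gamma(x)||1-\gamma(y)| \leq \tfrac{1}{4}\delta + \tfrac{1}{4}\delta = \tfrac{1}{2}\delta$. Consequently the translates $\{ t + B_{1/4} : t \in T \}$ are pairwise disjoint whenever $T$ is a maximal $B_{1/4}$-separated subset of $B$ (in the sense that the sets $t + B_{1/8}$ are disjoint, say, or more simply that $T$ is chosen greedily so that distinct elements of $T$ do not lie in $t + B_{1/4} - B_{1/4}$); and by maximality the translates $\{ t + B_{1/2} : t \in T\}$ cover $B$. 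This gives $|B| \leq |T|\, |B_{1/2}|$ and $|T|\,|B_{1/4}| \leq |G|$, hence it remains to compare $|B_{1/4}|$ with $|B_{1/2}|$, and more generally to control how the size of a Bohr set grows under dilation.

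The heart of the matter is therefore the doubling estimate $m_G(B_\rho) \leq C^d\, m_G(B_{\rho/2})$ for dilates, with $C$ an absolute constant; one then feeds $\rho = 1$ through the chain above and tracks constants. The cleanest route to this is the pigeonhole/covering argument from \cite[Lemma 4.20]{taovu}: partition the torus $\mathbb{U}^{\Gamma}$ (or rather the relevant region cut out by the radius-$\rho$ constraints) into $O(1)^d$ boxes of diameter at most $\rho/4$ in the relevant $\ell^\infty$-type metric $\|\gamma \mapsto 1 - \gamma(x)\|_\infty$, observe that any two points of $B_\rho$ whose images land in the same box differ by an element of $B_{\rho/2}$, and conclude that $B_\rho$ is covered by $O(1)^d$ translates of $B_{\rho/2}$ — which, since those translates can be taken to meet $B_\rho$ and hence to have size comparable to a Bohr set, yields $|B_\rho| \leq O(1)^d |B_{\rho/2}|$. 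The only real work is bookkeeping: Tao--Vu use a slightly different normalization of Bohr sets (their constraint is on $\|x\gamma\|_{\R/\Z}$ rather than on $|1-\gamma(x)|$), so I would state explicitly the comparison $|1 - e(\theta)| \leq 2\pi \|\theta\|_{\R/\Z}$ and $\|\theta\|_{\R/\Z} \leq \tfrac{1}{4}|1-e(\theta)|$ valid for small $\theta$, note that these differ only by absolute constants absorbed into the base of the exponential, and rerun their covering count to see that the base $7$ (rather than some other explicit constant) comes out after combining the two disjointness/covering steps above with the single dilation step.

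I expect the main obstacle to be purely one of constant-chasing rather than of ideas: one must verify that the constant emerging from the metric-space box-counting in the modified normalization, once multiplied through the two elementary covering inequalities $|B| \leq |T||B_{1/2}|$ and $|T| \leq |G|/|B_{1/4}|$ together with the dilation bound $|B_{1/4}| \geq C^{-d}|B_{1/2}|$, is at most $7^d$. A convenient way to avoid delicate optimization is to prove the slightly cruder statement $m_G(B_\rho) \leq K^d m_G(B_{\rho/2})$ for some explicit $K$ coming directly from \cite[Lemma 4.20]{taovu} with the normalization change, and then simply observe that two halvings from radius $1$ to radius $1/4$ give $m_G(B_{1/4}) \geq K^{-2d} m_G(B)$, whence $m_G(B_{1/2}) \geq m_G(B_{1/4}) \geq K^{-2d} m_G(B)$; choosing the box subdivision fine enough at the outset makes $K^2 \leq 7$, which is comfortably achievable. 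So the proof reduces to: (i) record the two elementary inclusions $B_{1/4}+B_{1/4} \subset B_{1/2}$ and the maximal-separated-set covering; (ii) quote the Tao--Vu dilation lemma, indicating the one-line change to its normalization; (iii) combine and check the constant is below $7$.
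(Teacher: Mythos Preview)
Your core idea matches the paper's: run the Tao--Vu box-covering argument and absorb the change of normalization through the comparison inequalities $|1-e(\theta)| \leq 2\pi\|\theta\|$ and $\|\theta\| \leq \tfrac14|1-e(\theta)|$ (equivalently, the paper's inclusions $\{|1-e(\omega)| \leq 4\delta\} \subset \{|\omega|\leq\delta\} \subset \{|1-e(\omega)| \leq 2\pi\delta\}$). That is all the paper does.

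Two parts of your write-up go astray, however.

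First, the opening maximal-separated-set paragraph is a detour that accomplishes nothing. From it you extract $|B| \leq |T|\,|B_{1/2}|$ and $|T|\,|B_{1/4}| \leq |G|$; the second inequality is far too weak to bound $|T|$ usefully, and you are left needing exactly the dilation bound $|B_{1/4}| \gg C^{-d}|B_{1/2}|$ that you were trying to prove in the first place. The box-covering argument you describe in your second paragraph already gives $|B| \leq K^d |B_{1/2}|$ directly, so this entire reduction is circular and should be deleted.

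Second, your constant-chasing strategy at the end is backwards. Iterating the dilation bound twice gives $m_G(B_{1/2}) \geq m_G(B_{1/4}) \geq K^{-2d} m_G(B)$, which is \emph{worse} than the one-step bound $m_G(B_{1/2}) \geq K^{-d} m_G(B)$; and ``subdividing fine enough'' only increases the number of boxes per dimension, hence increases $K$. There is no mechanism here to force $K^2 \leq 7$. The paper instead gets $K=7$ in a single pass: the image of $B(\Gamma,\delta)$ lies in $[-\delta/4,\delta/4]^d$ (from $\|\theta\|\leq\tfrac14|1-e(\theta)|$), and one needs cubes of side $\delta/(4\pi)$ to guarantee that differences land in $B(\Gamma,\delta/2)$ (from $|1-e(\theta)|\leq 2\pi\|\theta\|$); the count per coordinate is $\lceil (\delta/2)/(\delta/4\pi)\rceil = \lceil 2\pi\rceil = 7$.

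So: drop the separated-set preamble, run the box-count once with the two comparison inequalities, and read off $7^d$.
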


\begin{proof}
Let $e(x) = e^{2i\pi x}$ and
write characters $\gamma : G \rightarrow S^1$
as $\gamma = e(\omega)$, where 
${\omega : G \rightarrow \R/\Z}$.
In \cite{taovu} a Bohr set of frequency set
$\Gamma$ and radius $\delta$ is defined as
\begin{align*}
	\tilde{B}(\Gamma,\delta) = \{ x : { | \omega(x) | \leq 
	\delta} \ \ {\forall \omega \in \Gamma} \},
\end{align*}
whereas here it is defined as
\begin{align*}
	B(\Gamma,\delta) = \{ x : { | 1- e(\omega(x)) | \leq 
	\delta} \ \ {\forall \omega \in \Gamma} \}.
\end{align*}
The covering argument used in the proof of \cite[Lemma~4.20]{taovu}
may be adjusted via the elementary inclusions
\begin{align*}
	\{ \omega \,:\, |1-e(\omega)| \leq 4\delta \}
	\subset \{ \omega \,:\, |\omega| \leq \delta \}
	\subset \{ \omega \,:\, |1-e(\omega)| \leq 2\pi\delta \},
\end{align*}
yielding a constant $7$ in the final bound in place of $4$ there.
\end{proof}

We record an immediate consequence of this bound.

\begin{lemma}[Growth of Bohr sets]
	\label{thm:bohrsetgrowth}
	Suppose that $B$ is a Bohr set and $\rho \in (0,1]$. 
	Then 
	\begin{align*}
		m_G(B_\rho) \geq e^{ - 6 d \log 2\rho^{-1} } m_G(B).
	\end{align*}
\end{lemma}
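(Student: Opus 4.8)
The plan is to bootstrap the doubling bound of Lemma~\ref{thm:bohrsetdoubling} by iterating it. First I would fix the integer $k = \lceil \log_2 \rho^{-1} \rceil$, so that $2^{-k} \leq \rho \leq 1$. Since $B_\sigma = B(\Gamma, \sigma\delta)$ and the defining constraint $|1-\gamma(x)| \leq \sigma\delta$ is more restrictive for smaller $\sigma$, dilation is monotone: $B_{2^{-k}} \subseteq B_\rho$. As $m_G$ is monotone under inclusion, it suffices to bound $m_G(B_{2^{-k}})$ from below.

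Next I would observe that each dilate $B_{2^{-j}} = B(\Gamma, 2^{-j}\delta)$ is again a Bohr set with the \emph{same} dimension $d = |\Gamma|$, and that its half-dilate is exactly $B_{2^{-j-1}}$. Applying Lemma~\ref{thm:bohrsetdoubling} to $B_{2^{-j}}$ therefore gives $m_G(B_{2^{-j-1}}) \geq 7^{-d}\, m_G(B_{2^{-j}})$ for each $j \geq 0$, and telescoping this over $j = 0, 1, \dots, k-1$ yields $m_G(B_{2^{-k}}) \geq 7^{-dk}\, m_G(B)$.

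Finally I would convert the constant. Since $k \leq \log_2(2\rho^{-1}) = \log(2\rho^{-1})/\log 2$ and $\log 7/\log 2 < 6$, we get $7^{-dk} = e^{-dk\log 7} \geq e^{-d(\log 7/\log 2)\log(2\rho^{-1})} \geq e^{-6d\log 2\rho^{-1}}$, which combined with the previous inequality gives the claimed bound.

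The argument is routine and I do not expect a genuine obstacle; the only points needing a word of care are that the dilates $B_{2^{-j}}$ remain Bohr sets of dimension $d$ so that Lemma~\ref{thm:bohrsetdoubling} applies at each step with the same constant $7^{-d}$, and the degenerate case $\rho = 1$, where $k = 0$ and the asserted bound $m_G(B_1) \geq e^{-6d\log 2}\, m_G(B)$ holds trivially.
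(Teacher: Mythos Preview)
Your proposal is correct and is precisely the argument the paper has in mind: the lemma is stated there as ``an immediate consequence'' of Lemma~\ref{thm:bohrsetdoubling}, and the natural way to cash this out is exactly your dyadic iteration. In fact your computation shows the sharper constant $\log 7/\log 2 < 3$ would already suffice; the paper records $6$ for convenience.
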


Observing that $B = B(\Gamma,2)_{\delta/2}$,
this in turn gives the following lemma.

\begin{lemma}[Size of Bohr sets]
	\label{thm:bohrsetsize}
	Suppose that $B$ is a Bohr set of radius
	$\delta \leq 2$.
	Then 
	\begin{align*}
		m_G(B) \geq e^{ - 6 d \log 4\delta^{-1} }.
	\end{align*}
\end{lemma}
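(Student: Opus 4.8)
The plan is to obtain this as an immediate corollary of Lemma~\ref{thm:bohrsetgrowth}, following the remark that precedes the statement. The key observation is that the ``full radius'' Bohr set $B(\Gamma,2)$ is all of $G$: since $|1-\gamma(x)| \leq 2$ for every $x \in G$ and every $\gamma \in \wh{G}$, one has $B(\Gamma,2) = G$, and hence $m_G(B(\Gamma,2)) = 1$. This Bohr set carries the same frequency set $\Gamma$, and therefore the same dimension $d = |\Gamma|$, as the given Bohr set $B = B(\Gamma,\delta)$.

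Next I would apply Lemma~\ref{thm:bohrsetgrowth} to $B(\Gamma,2)$ with dilation parameter $\rho = \delta/2$. The hypothesis $\delta \leq 2$ guarantees $\rho \in (0,1]$, so the lemma is applicable, and by definition of a dilate we have $B(\Gamma,2)_{\rho} = B(\Gamma,2\rho) = B(\Gamma,\delta) = B$. Since $2\rho^{-1} = 4\delta^{-1}$, Lemma~\ref{thm:bohrsetgrowth} then yields
\begin{align*}
	m_G(B) = m_G\big(B(\Gamma,2)_{\delta/2}\big) \geq e^{-6d\log 2\rho^{-1}}\, m_G(B(\Gamma,2)) = e^{-6d\log 4\delta^{-1}},
\end{align*}
which is exactly the claimed bound.

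There is essentially no obstacle here; the proof is a one-line bookkeeping deduction. The only points meriting a moment's attention are verifying that $\rho = \delta/2$ falls in the admissible range $(0,1]$ (this is precisely where the hypothesis $\delta \leq 2$ enters) and noting that the dimension $d$ appearing in Lemma~\ref{thm:bohrsetgrowth} applied to $B(\Gamma,2)$ is the same $d = |\Gamma|$ as the dimension of $B$, so that no factor is lost. If one preferred not to invoke the packaged growth estimate, one could instead iterate the halving bound $m_G(B_{1/2}) \geq 7^{-d} m_G(B)$ of Lemma~\ref{thm:bohrsetdoubling} roughly $\log_2(2/\delta)$ times starting from $B(\Gamma,2) = G$, but this merely reproves Lemma~\ref{thm:bohrsetgrowth} and is strictly longer.
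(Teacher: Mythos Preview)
Your proposal is correct and matches the paper's own argument exactly: the paper derives this lemma from Lemma~\ref{thm:bohrsetgrowth} via the observation $B = B(\Gamma,2)_{\delta/2}$, which is precisely what you do. Your write-up simply makes explicit the two points the paper leaves tacit, namely that $B(\Gamma,2) = G$ and that $\rho = \delta/2 \in (0,1]$.
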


One essential fact about Bohr sets is that they support
a lot of arithmetic structure.
A simple illustration of this principle is given by the following
easy consequence of Dirichlet's theorem on simultaneous
approximation \cite[Theorem II.1A]{schmidt}.

\begin{lemma}[Arithmetic progression in a Bohr set]
	\label{thm:APinbohrset}
	Let $B$ be a Bohr set of radius $\delta < \pi$.
	Then $B$ contains an arithmetic progression of 
	size at least $(1/2\pi)\,\delta N^{1/d}$.
\end{lemma}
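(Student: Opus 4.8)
The plan is to reduce the problem to a simultaneous Diophantine approximation statement. Write each character $\gamma \in \Gamma$ as $\gamma = e(\omega_\gamma)$ with $\omega_\gamma : G \to \R/\Z$, and recall that since $G = \ZN$, every such homomorphism has the form $\omega_\gamma(x) = a_\gamma x / N$ for some $a_\gamma \in \Z$, where we identify $x \in \ZN$ with a representative. Thus a point $tx_0$ of an arithmetic progression $\{0, x_0, 2x_0, \dots\}$ lies in $B(\Gamma,\delta)$ as soon as $|1 - e(t a_\gamma x_0 / N)| \leq \delta$ for all $\gamma \in \Gamma$, which by the elementary inequality $|1 - e(\theta)| \leq 2\pi \|\theta\|_{\R/\Z}$ is guaranteed once $\|t a_\gamma x_0 / N\|_{\R/\Z} \leq \delta/2\pi$ for every $\gamma$.

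First I would apply Dirichlet's theorem on simultaneous approximation \cite[Theorem II.1A]{schmidt}: for the $d$ real numbers $a_\gamma x_0 / N$ (with $x_0 = 1$, say) and any integer parameter $Q \geq 1$, there is an integer $t$ with $1 \leq t \leq Q^d$ such that $\|t a_\gamma / N\|_{\R/\Z} \leq 1/Q$ for all $\gamma \in \Gamma$. Choosing $Q = \lceil 2\pi / \delta \rceil$ then forces $tn \in B(\Gamma,\delta)$ for every $0 \leq n \leq L$ with $L = \lfloor Q^d \rfloor$, since $\|tn a_\gamma / N\|_{\R/\Z} \leq n/Q \leq \delta/2\pi$ for such $n$ is not quite right — one must instead take the common difference to be $t$ and bound $n$ by a smaller range. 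More carefully: having fixed $t$ with $\|t a_\gamma/N\| \leq 1/Q$, the progression $\{n t : 0 \leq n < Q\}$ satisfies $\|nt a_\gamma / N\| \leq n/Q < 1$, and to land inside $B(\Gamma,\delta)$ we need $n/Q \leq \delta / 2\pi$, giving length roughly $\delta Q / 2\pi$; combined with the freedom in $t$ one optimizes to get a progression of length $\gg \delta N^{1/d}$.

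The cleanest route, and the one I would actually write, is to apply Dirichlet with $Q = \lfloor N^{1/d} \rfloor$ to obtain $t$ with $1 \leq t \leq N$ and $\|t a_\gamma / N\|_{\R/\Z} \leq N^{-1/d}$ for all $\gamma$; then the arithmetic progression $P = \{ n t \bmod N : 0 \leq n \leq (\delta/2\pi) N^{1/d} \}$ has all its terms in $B(\Gamma,\delta)$, because $\|nt a_\gamma/N\|_{\R/\Z} \leq n N^{-1/d} \leq \delta/2\pi$ and hence $|1 - \gamma(nt)| \leq 2\pi \cdot \delta/2\pi = \delta$. One must check $P$ is a genuine (nontrivial) arithmetic progression in $\ZN$: this holds because its length is at most $N^{1/d} \leq N$ and $t \ne 0$, so the terms $nt$ for $n$ in the stated range are distinct modulo $N$ (or, if not, one passes to a shorter honest progression of comparable length). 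This yields a progression of size at least $\lfloor (\delta/2\pi) N^{1/d}\rfloor \geq (1/2\pi)\,\delta N^{1/d}$ once $N$ is large enough that the floor does not cost a factor, which is covered by our standing assumption on $N$ together with $\delta < \pi$.

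The main obstacle is purely bookkeeping rather than conceptual: one has to be a little careful that the reduction from "$|1-\gamma(x)| \leq \delta$" to "$\|\omega(x)\| \leq \delta/2\pi$" uses the inequality in the correct direction, and that the progression extracted from Dirichlet's theorem is nondegenerate in $\ZN$ — i.e. that collisions $n_1 t \equiv n_2 t \pmod N$ do not occur in the relevant range, which follows since the range of $n$ has length $< N$ and $t$ is coprime-free reasoning is unnecessary because distinctness of $n t \bmod N$ for $0 \le n < N/\gcd(t,N)$ and $N/\gcd(t,N) \ge N^{1/d} \ge$ our length suffices. I would present this in about half a page.
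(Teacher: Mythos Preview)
Your approach is precisely the one the paper intends: it states the lemma as an easy consequence of Dirichlet's theorem on simultaneous approximation and gives no further proof, and your ``cleanest route'' (apply Dirichlet with $Q=\lfloor N^{1/d}\rfloor$ to get $t$ with $\|ta_\gamma/N\|\leq N^{-1/d}$, then take the progression $\{nt:0\leq n\leq(\delta/2\pi)N^{1/d}\}$ and use $|1-e(\theta)|\leq 2\pi\|\theta\|$) is exactly the standard derivation.

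The one point that is not clean is your distinctness claim $N/\gcd(t,N)\geq N^{1/d}$: this is unjustified and false in general, since Dirichlet only bounds $t\leq Q^d$ and says nothing about $\gcd(t,N)$. The honest fix is short: if the order $m=N/\gcd(t,N)$ of $t$ is smaller than the desired length, note that $ta_\gamma/N\in(1/m)\Z$ (because $mt\equiv 0\bmod N$), so $\|ta_\gamma/N\|\leq 1/Q$ together with $m<Q$ forces $\|ta_\gamma/N\|=0$ for every $\gamma$; then every $a_\gamma$ is a multiple of $m$, the Bohr set is invariant under the subgroup of index $m$, and one passes to the quotient $\Z/(N/m)\Z$ and repeats. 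Alternatively one simply notes that in the paper's applications $N$ may be taken prime, which removes the issue. This is bookkeeping rather than a conceptual gap.
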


We now recall the notion of regularity of Bohr sets
which is of crucial importance for the proof
of Theorems~\ref{thm:AP1} and \ref{thm:AP2}.
This is not needed for the proof of Theorem~\ref{thm:AP0},
therefore the reader only interested in that result 
may very well skip the following discussion.

Bourgain \cite{bourgainroth1} introduced the notion of 
regular Bohr sets in the context of Roth's theorem.
In that situation one often needs to work with Bohr sets
on different scales, and it is therefore desirable
that the size of dilates $B_{1+\rho}$ vary continuously
with $\rho$.

\begin{definition}[Regular Bohr set]
	\label{thm:bohrsetregdef}
	Let $C_0$ be an absolute constant.
	A Bohr set $B$ is said to be \textit{regular} for $C_0$ if
	\begin{align}
		\label{eq:bohrsetsregineq}
		&\phantom{( 0 < |\rho | < \tfrac{1}{C_0 d} )} &
		1 - C_0 |\rho| d \leq
		\frac{|B_{1 + \rho}|}{|B|}
		\leq 1 + C_0 |\rho| d
		&&\quad ( 0 < |\rho | < \tfrac{1}{C_0 d} ).
	\end{align}
\end{definition}

An essential observation of Bourgain \cite{bourgainroth1}
is that one may always ensure the regularity of a Bohr set
up to dilation by a constant factor.

\begin{lemma}[Existence of regular Bohr sets]
	\label{thm:bohrsetsexistence}
	There exists an absolute constant $C_0$ such 
	that for \textit{every} Bohr set $B$, there exists 
	$\kappa \in \big[\tfrac{1}{2},1\big)$ such that
	$B_\kappa$ is regular for $C_0$.
\end{lemma}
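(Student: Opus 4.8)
The plan is to prove the contrapositive via a pigeonhole/averaging argument on the function $\rho \mapsto \log |B_\rho|$, following Bourgain's original observation. Fix a Bohr set $B = B(\Gamma,\delta)$ of dimension $d$. First I would introduce the auxiliary function $f(u) = \tfrac{1}{d}\log \big( |B_{2^{u}}| / |B| \big)$ for $u$ in a suitable interval around $0$, say $u \in [-1, 0]$ so that $2^u \in [\tfrac12, 1]$; this is a nondecreasing function of $u$ since dilating by a larger factor can only enlarge the Bohr set, and by Lemma~\ref{thm:bohrsetdoubling} it satisfies the two-sided control $-\tfrac{\log 7}{\log 2} \leq f(u) \leq 0$ on that interval, so $f$ has bounded total variation (bounded by an absolute constant). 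The point of the logarithmic rescaling is that multiplicative dilation becomes additive translation, which is exactly what the defining inequality \eqref{eq:bohrsetsregineq} wants to see.

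The key step is then a pigeonhole: since $f$ is monotone with total variation $O(1)$ over an interval of length $1$, it cannot have large "derivative" everywhere. More precisely, I would argue that there must exist a point $u_0 \in [-1,0]$ (in fact one can force $2^{u_0} \in [\tfrac12,1)$) around which $f$ is slowly varying, in the sense that $|f(u_0 + t) - f(u_0)| \leq K|t|$ for all sufficiently small $|t|$ and an absolute constant $K$. If no such point existed, one could iterate short intervals of large variation to contradict the uniform bound $\mathrm{Var}(f) = O(1)$; this is the standard "most points of a monotone bounded function are points of bounded local growth" lemma. Having located such $u_0$, I set $\kappa = 2^{u_0} \in [\tfrac12, 1)$ and translate the estimate back through the exponential: for $|\rho|$ small one has $2^{u_0 + t} = \kappa(1+\rho)$ with $t \asymp \rho$, so $|\,\tfrac{1}{d}\log(|B_{\kappa(1+\rho)}|/|B_\kappa|)\,| \leq K'|\rho|$, and exponentiating together with the elementary inequalities $e^{-2x} \leq 1 - x$ and $e^{x} \leq 1 + 2x$ valid for small $x>0$ yields precisely $1 - C_0|\rho| d \leq |B_{\kappa(1+\rho)}|/|B_\kappa| \leq 1 + C_0|\rho|d$ in the range $|\rho| < \tfrac{1}{C_0 d}$, for a suitable absolute $C_0$. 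Thus $B_\kappa = B(\Gamma, \kappa\delta)$ is regular for $C_0$.

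The main obstacle, such as it is, lies in the bookkeeping of the pigeonhole step: one must make the "there is a point of bounded local variation" argument quantitatively clean while keeping every constant absolute and independent of $d$, $\delta$, and $N$ — in particular checking that the range $|\rho| < \tfrac{1}{C_0 d}$ produced by translating back is genuinely the one in Definition~\ref{thm:bohrsetregdef}, and that the chosen $u_0$ can always be taken with $2^{u_0} \geq \tfrac12$. None of this is conceptually hard, but it requires care to avoid a hidden dependence on $d$ sneaking into $C_0$ through, say, the length of the interval over which one pigeonholes. An alternative to the pigeonhole phrasing is to integrate: since $\int_{-1}^0 f'(u)\,du = f(0) - f(-1) = O(1)$ and $f' \geq 0$, the set of $u$ with $f'(u) > K$ has measure $O(1/K)$, so for $K$ a large absolute constant there is a positive-measure set of good $u_0$ — but one then has to handle the non-smoothness of $f$ (it is only monotone, not differentiable), so the discrete pigeonhole over dyadic-type subintervals is cleaner and is the route I would actually take.
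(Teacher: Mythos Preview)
Your proposal is correct: it is precisely Bourgain's pigeonhole argument on the monotone bounded function $u \mapsto d^{-1}\log|B_{2^u}|$, and the bookkeeping concerns you flag are genuine but routine. The paper, however, does not give its own proof of this lemma at all---it simply states that the result is standard and refers to Proposition~3.5 of \cite{sanders3APs}---so in effect you have supplied the argument the paper chose to omit; the proof in the cited reference follows the same idea you outline.
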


The proof of this result can now be found in many
places and we refer, for example, 
to Proposition~3.5 of \cite{sanders3APs}.
From now on we fix $C_0$ and we simply say 
that a Bohr set $B$ satisfying \eqref{eq:bohrsetsregineq} 
is regular.
The regularity property allows for a very useful 
averaging lemma, first formalized by 
Bourgain as \cite[Lemma~3.16]{bourgainroth1}.
The version we record below is closest
to \cite[Lemma~4.2]{greentaoinverse};
it says that Bohr sets are roughly invariant
under translation by, or averaging over, elements
of a smaller Bohr set.

\begin{lemma}[Regularity averaging lemma]
	\label{thm:bohrsetsregaveraging}
	Suppose that $B$ is a regular Bohr set and let 
	$x \in G$ and $\lambda : G \rightarrow \C$ 
	with $\|\lambda\|_{L^1}=1$.
	Then
	\begin{align*}
		&& &\| \mu_{x+B} - \mu_B \|_{L^1} \leq C_1 \rho d 
		&&\text{if}\quad x \in B_\rho, \\
		&& &\| \mu_B \ast \lambda - \mu_B \|_{L^1} \leq C_1 \rho d 
		&&\text{if}\quad \Supp(\lambda) \subset B_\rho,
	\end{align*}
	provided $\rho \leq \tfrac{1}{C_0 d}$ 
	and where $C_1 = 2 C_0$.
\end{lemma}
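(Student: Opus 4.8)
The plan is to establish the translation estimate first, and then obtain the averaging estimate from it by writing $\mu_B\ast\lambda$ as an average of translates of $\mu_B$ and invoking the translation estimate.

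For the reduction, I would unwind the definition of convolution and substitute $z=u-y$ to get $\mu_B\ast\lambda(u)=\E_{z}\lambda(z)\,\mu_B(u-z)$; since $\mu_B(u-z)=m_G(B)^{-1}1_B(u-z)=\mu_{z+B}(u)$, this reads $\mu_B\ast\lambda=\E_{z}\lambda(z)\,\mu_{z+B}$. Here $\lambda$ is to be read as a probability density ($\lambda\geq 0$ with $\|\lambda\|_{L^1}=1$, which is what ``averaging'' requires and without which the estimate fails — for example it is false for $\lambda=-\mu_{B_\rho}$), so $\E_{z}\lambda(z)=1$ and hence also $\mu_B=\E_{z}\lambda(z)\,\mu_B$. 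Subtracting, $\mu_B\ast\lambda-\mu_B=\E_{z}\lambda(z)(\mu_{z+B}-\mu_B)$; taking $L^1$-norms, applying the triangle inequality, and using $\Supp(\lambda)\subset B_\rho$, it suffices to prove $\|\mu_{z+B}-\mu_B\|_{L^1}\leq C_1\rho d$ for every $z\in B_\rho$, since then $\|\mu_B\ast\lambda-\mu_B\|_{L^1}\leq C_1\rho d\cdot\E_z|\lambda(z)|=C_1\rho d$.

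It remains to prove the translation estimate. From the normalization $\mu_A=m_G(A)^{-1}1_A$ and $|x+B|=|B|$ one gets $\mu_{x+B}-\mu_B=m_G(B)^{-1}(1_{x+B}-1_B)$, so $\|\mu_{x+B}-\mu_B\|_{L^1}=|(x+B)\triangle B|/|B|$, and everything comes down to bounding this relative symmetric difference. The one real input is that $x+B\subseteq B_{1+\rho}$ whenever $x\in B_\rho$: for $y\in B$ and $\gamma\in\Gamma$ we have $|1-\gamma(y+x)|=|1-\gamma(y)\gamma(x)|\leq |1-\gamma(y)|+|1-\gamma(x)|\leq \delta+\rho\delta=(1+\rho)\delta$. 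Since trivially $B\subseteq B_{1+\rho}$ as well, $(x+B)\cup B\subseteq B_{1+\rho}$, and because $|x+B|=|B|$ a line of inclusion--exclusion gives $|(x+B)\triangle B|=2\big(|(x+B)\cup B|-|B|\big)\leq 2\big(|B_{1+\rho}|-|B|\big)$. Now I would invoke regularity of $B$ (Definition~\ref{thm:bohrsetregdef}) with the parameter $\rho\leq\tfrac{1}{C_0 d}$ to bound $|B_{1+\rho}|\leq(1+C_0\rho d)|B|$, whence $|(x+B)\triangle B|\leq 2C_0\rho d\,|B|$ and $\|\mu_{x+B}-\mu_B\|_{L^1}\leq 2C_0\rho d=C_1\rho d$, as claimed.

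I do not expect a genuine obstacle here: the lemma is bookkeeping around the definition of regularity, its entire content being the inclusion $x+B\subseteq B_{1+\rho}$ combined with the regularity bound on $|B_{1+\rho}|$. The only points needing a little care are keeping the normalization conventions straight so that an $L^1$-distance of normalized indicators becomes a relative symmetric difference; the harmless mismatch between the strict inequality $|\rho|<\tfrac{1}{C_0 d}$ in Definition~\ref{thm:bohrsetregdef} and the non-strict $\rho\leq\tfrac{1}{C_0 d}$ permitted here, which at worst forces applying regularity at a slightly smaller radius at no cost to $C_1$; and the tacit nonnegativity of $\lambda$ used in the averaging part.
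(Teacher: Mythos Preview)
Your proof is correct and essentially matches the paper's: the paper bounds the symmetric difference via the two inclusions $B_{1-\rho}\subseteq(x+B)\cap B$ and $(x+B)\cup B\subseteq B_{1+\rho}$, invoking both sides of the regularity inequality, whereas you use only the upper inclusion together with $|x+B|=|B|$ --- a cosmetic rearrangement yielding the same constant $C_1=2C_0$ --- and both arguments derive the averaging estimate identically by weighting the translation estimate by $\lambda$. Your remark that the second estimate tacitly requires $\lambda\geq 0$ (so that $\E_z\lambda(z)=\|\lambda\|_{L^1}=1$) is correct; this is indeed an unstated hypothesis in the lemma, satisfied in every application the paper makes of it.
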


\begin{proof}
	Observe that 
	$\| \mu_{x+B} - \mu_B \|_{L^1} = 
	\tfrac{1}{|B|} \sum_{y\in G} |1_{x+B}(y)-1_B(y)|$
	and that $1_B$ and $1_{x+B}$ are equal 
	on $B_{1-\rho}$ and outside $B_{1+\rho}$.
	Therefore, 
	$\| \mu_{x+B} - \mu_B \|_{L^1} 
	\leq \tfrac{1}{|B|}(|B_{1+\rho}|-|B_{1-\rho}|)$
	and the first bound follows from \eqref{eq:bohrsetsregineq}.
	Summing over $x$ with weights $\lambda(x)$
	and applying the triangle inequality 
	yields the second estimate.
\end{proof}

\section{The Croot-Laba-Sisask approach}
\label{sec:CLS}

The aim of this section is to
prove Theorem~\ref{thm:AP0}. 
This result is a rather direct consequence 
of \cite[Theorem~7.1]{CLS} due to Croot, Laba
and Sisask, which says that the set of almost-periods
of a convolution is guaranteed to contain a large Bohr set.
The proof of this theorem relies on a combination of
the Croot-Sisask lemma~\cite{CS} 
and Chang's spectral lemma~\cite[Lemmas~3.1 and 3.4]{chang};
this combination was first exploited
by Sanders~\cite{sandersroth2,sandersBR}.
For our purpose we only need the following special case.

\begin{lemma}[Bohr-almost-periodicity of convolutions]
	\label{thm:CLSbohrCS}
	Let $p \geq 2$ and $\theta \in (0,1)$ be a pair of parameters.
	Suppose that $A_1 , A_2$ are subsets of $\ZN$ of 
	respective densities $\alpha_1, \alpha_2$.
	Then there exists a Bohr set $B$ such that
	\begin{align*}
		&\phantom{(x \in B)} &
		&\| 1_{A_1} \ast \mu_{A_2} - \tau_x  1_{A_1} \ast \mu_{A_2} \|_{L^p} 
		\leq \theta\alpha_1^{1/p}
		&&(x \in B)
	\end{align*}
	with dimension and radius satisfying
	\begin{align*}
		d &\leq Cp\theta^{-2} (\log \tfrac{2}{\theta\alpha_1\alpha_2})^3,  \\
		\delta &\geq c ( \theta \alpha_1 \alpha_2 / p )^C . 
	\end{align*}
\end{lemma}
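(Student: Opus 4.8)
The plan is to deduce Lemma~\ref{thm:CLSbohrCS} from the general almost-periodicity theorem of Croot, Laba and Sisask, namely \cite[Theorem~7.1]{CLS}, which asserts that for functions $f,g$ on $\ZN$, a parameter $p\geq 2$ and $\eps\in(0,1)$, the set of $L^p$-almost-periods of $f\ast g$ of quality $\eps$ contains a Bohr set $B$ whose dimension is $O\big(p\,\eps^{-2}\log(2/\text{(relevant densities)})\big)$ and whose radius is bounded below by a power of $\eps$, the densities, and $1/p$. First I would specialise that statement to $f=1_{A_1}$ and $g=\mu_{A_2}$. The key normalisation point is that $\|1_{A_1}\ast\mu_{A_2}\|_{L^p}\leq\|1_{A_1}\|_{L^p}=\alpha_1^{1/p}$ by Young's inequality (since $\|\mu_{A_2}\|_{L^1}=1$), so measuring almost-periodicity against the \emph{relative} scale $\theta\alpha_1^{1/p}$ corresponds to taking the quality parameter $\eps\asymp\theta$ in \cite[Theorem~7.1]{CLS}. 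Substituting $\eps\asymp\theta$ into the dimension and radius bounds from that theorem then yields $d\ll p\theta^{-2}(\log\tfrac{2}{\theta\alpha_1\alpha_2})^3$ and $\delta\gg(\theta\alpha_1\alpha_2/p)^C$, which is exactly what is claimed.

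The main steps, in order, would be: (i) recall the precise form of \cite[Theorem~7.1]{CLS}, being careful about which density-dependent quantities enter the logarithm (the relevant quantity is essentially the density of $A_2$ and the $L^1$-to-$L^p$ normalisation of $1_{A_1}$, so it reduces to $\theta\alpha_1\alpha_2$ after tidying); (ii) verify the normalisation identity $\|1_{A_1}\ast\mu_{A_2}\|_{L^p}\leq\alpha_1^{1/p}$ and, more importantly, that the trivial lower bound $\|1_{A_1}\ast\mu_{A_2}\|_{L^p}\geq\|1_{A_1}\ast\mu_{A_2}\|_{L^1}=\alpha_1$ is consistent, so that an almost-period of absolute quality $\theta\alpha_1^{1/p}$ is genuinely a relative almost-period of quality $\asymp\theta$; (iii) invoke the theorem with $\eps\asymp\theta$ and read off $d$ and $\delta$; (iv) absorb any stray constants and logarithmic factors into the constants $c,C$, using $N\geq 1+\exp(e^e)$ as in the standing assumption to simplify $\log$-terms. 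No regularity of the Bohr set is needed here, so I would not invoke Lemma~\ref{thm:bohrsetsexistence}.

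I expect the only real friction to be bookkeeping: matching the exact dependence inside the logarithm in \cite[Theorem~7.1]{CLS} (which is typically stated with a term like $\log(2\|f\|_{L^1}^{-?}\dots)$ or in terms of $|\Supp|$) to the clean expression $\log\tfrac{2}{\theta\alpha_1\alpha_2}$, and making sure the power $C$ in the radius bound genuinely covers the contributions of $\theta$, $\alpha_1$, $\alpha_2$ and $p$ simultaneously. This is routine but requires care, precisely the kind of computation the introduction flags as "not essentially difficult although it requires some care." A secondary subtlety is that \cite[Theorem~7.1]{CLS} is usually phrased so that the almost-periodicity holds for $x$ ranging over a Bohr set \emph{up to a further small dilate}; since we only need membership in \emph{some} Bohr set $B$ with the stated parameters, any such dilation can be folded into $\delta$ at the cost of adjusting $c,C$, so this causes no difficulty.
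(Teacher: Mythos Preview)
Your proposal is essentially the paper's approach: both deduce the lemma as a direct specialisation of a Croot--Laba--Sisask almost-periodicity theorem with $\eps=\theta$, and then tidy the resulting dimension and radius bounds into the stated form. The only minor difference is that the paper invokes \cite[Theorem~7.4]{CLS} (the version stated with doubling constants, applied with $A=A_2$, $B=A_1$, $S=G$, $K_1=2/\alpha_2$, $K_2=2/\alpha_1$) rather than \cite[Theorem~7.1]{CLS}; this yields an intermediate parameter $\delta'\asymp\theta\alpha_2^{1/2}$ from which the dimension bound $d\ll p\theta^{-2}(\log 2/\delta')^2\log(2/\alpha_2)$ and radius $\delta=\delta'/d$ are read off and then simplified crudely, exactly as your step~(iv) anticipates.
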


\begin{proof}
Apply Theorem~7.4 of \cite{CLS} with 
$A=A_2$, $B=A_1$, and $S=G$, with 
doubling constants $K_1=2/\alpha_2$ and
$K_2 = 2/\alpha_1$, and with $\eps = \theta$.
This yields a parameter
\begin{align*}
	\delta' = c \theta \alpha_2^{1/2} \alpha_1^{1/p-1/2} \geq c\theta\alpha_2^{1/2}
\end{align*}
and a Bohr set of dimension at most
\begin{align*}
	d \leq Cp \theta^{-2} (\log 2/\delta')^2 (\log 2/\alpha_2)
	\leq Cp \theta^{-2} \big(\! \log \tfrac{2}{\theta\alpha_1\alpha_2} \big)^3
\end{align*}
and radius
\begin{align*}
	\delta = \delta' / d \geq c p^{-1} \theta^3 \alpha_2^{1/2}  
	\big(\! \log \tfrac{2}{\theta\alpha_1\alpha_2} \big)^{-3}
	\gg ( \theta \alpha_1 \alpha_2 / p )^4
\end{align*}
satisfying the desired almost-periodicity property.
The bound on $\delta$ might seem less crude once
we note that the lower bound of Lemma~\ref{thm:bohrsetsize} 
on $\log m_G(B)$ depends
linearly on $d$ and $\log 2\delta^{-1}$.
We have also been somewhat imprecise in handling 
logarithmic terms, so as not to needlessly 
clutter the main estimates:
indeed these terms have little bearing on the
quality of the final results.
\end{proof}

From Lemma~\ref{thm:CLSbohrCS} we first obtain a result slightly 
more general than Theorem~\ref{thm:AP0}
which finds a translate of a Bohr set in a sumset.
We follow the proof of the similar Theorem~1.7 
on p.~1380 of \cite{CS},
relying on little more than an elementary identity
of convolutions.

\begin{proposition}
	\label{thm:B0}
	Suppose that $A_1,A_2,A_3$ are subsets of $\Z/N\Z$ of 
	respective densities $\alpha_1,\alpha_2,\alpha_3$.
	Then there exists $z \in G$ and a Bohr set $B$ with
	\begin{align*}
		d &\leq C\alpha_1^{-2} \big(\! \log \tfrac{2}{\alpha_1\alpha_2\alpha_3} \big)^4 \\
		\delta &\geq c (\alpha_1\alpha_2\alpha_3)^C
	\end{align*}
	such that $1_{A_1} \ast 1_{A_2} \ast 1_{A_3}(y) 
	\geq \tfrac{1}{2} \alpha_1 \alpha_2 \alpha_3$
	for every $y \in z + B$.
\end{proposition}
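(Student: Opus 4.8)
The plan is to combine the Bohr-almost-periodicity of the convolution $1_{A_1}\ast\mu_{A_2}$ from Lemma~\ref{thm:CLSbohrCS} with an elementary convolution identity, in the spirit of Theorem~1.7 of \cite{CS}. First I would apply Lemma~\ref{thm:CLSbohrCS} with $p=2$, with the pair of sets $A_1,A_2$, and with a parameter $\theta$ to be chosen as a small absolute constant (say $\theta = \tfrac14$ works after the computation below); this produces a Bohr set $B$ such that $\|1_{A_1}\ast\mu_{A_2} - \tau_x(1_{A_1}\ast\mu_{A_2})\|_{L^2} \leq \theta\alpha_1^{1/2}$ for all $x\in B$, with $d \ll \alpha_1^{-2}(\log\tfrac{2}{\alpha_1\alpha_2\alpha_3})^3$ and $\delta \gg (\alpha_1\alpha_2\alpha_3)^C$ — note that $\log\tfrac{2}{\theta\alpha_1\alpha_2}$ is absorbed into $\log\tfrac{2}{\alpha_1\alpha_2\alpha_3}$ since $\theta$ is an absolute constant and $\alpha_3 \leq 1$, which accounts for the dimension bound stated (an extra logarithmic factor over the cube appears naturally and is absorbed, matching the $(\log)^4$ in the proposition, though one could also be content with $(\log)^3$).

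Next I would exploit the identity
\begin{align*}
	\langle 1_{A_1}\ast 1_{A_2}\ast 1_{A_3},\ \tau_{-z}\mu_B\rangle_{L^2}
	= \alpha_2 \E_{x\in G}\, \big(1_{A_1}\ast\mu_{A_2}\big)\ast \mu_B(z-x)\, 1_{A_3}(x),
\end{align*}
or more cleanly the observation that for any $y$,
\begin{align*}
	1_{A_1}\ast 1_{A_2}\ast\mu_B(y)
	= \alpha_2 \cdot \big(1_{A_1}\ast\mu_{A_2}\big)\ast\mu_B(y),
\end{align*}
and that, by the almost-periodicity of $1_{A_1}\ast\mu_{A_2}$ under translation by elements of $B$ together with $\|\mu_B\|_{L^1}=1$ and Cauchy--Schwarz, the averaged quantity $\big(1_{A_1}\ast\mu_{A_2}\big)\ast\mu_B(y)$ stays within $\theta\alpha_1^{1/2}$ in $L^2(dy)$ of the constant $\alpha_1\alpha_2$ — more precisely, $\|1_{A_1}\ast\mu_{A_2}\ast\mu_B - \langle 1_{A_1}\ast\mu_{A_2},\mu_B\rangle\|$ is controlled, and one checks $\langle 1_{A_1}\ast\mu_{A_2},\mu_B\rangle$ is close to $\alpha_1\alpha_2$ when $B$ is not too small. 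The cleanest route is: for $x\in B$, $\|1_{A_1}\ast\mu_{A_2}\ast\mu_B - 1_{A_1}\ast\mu_{A_2}\|_{L^p}\leq\theta\alpha_1^{1/p}$ by averaging the translation bound over $B$, hence by Hölder $1_{A_1}\ast\mu_{A_2}\ast\mu_B(y) \geq 1_{A_1}\ast\mu_{A_2}(y) - (\text{small})$ on a large set; then convolving once more with $1_{A_3}$ and using that $1_{A_1}\ast\mu_{A_2}$ has average $\alpha_1$ over all of $G$ pins down a $z$ where $1_{A_1}\ast 1_{A_2}\ast 1_{A_3}(y)$ is large on $z+B$.

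I expect the main obstacle to be the bookkeeping that turns an $L^p$-closeness statement into a \emph{pointwise} lower bound $1_{A_1}\ast 1_{A_2}\ast 1_{A_3}(y) \geq \tfrac12\alpha_1\alpha_2\alpha_3$ holding for \emph{every} $y$ in a translate of $B$, rather than for most such $y$. The standard device — the one used in \cite{CS} — is to pass to the triple convolution and argue by averaging: one shows $\E_{z}\, \big[1_{A_1}\ast 1_{A_2}\ast 1_{A_3}\ast\mu_B\big](z) = \alpha_1\alpha_2\alpha_3$ exactly, while the $L^2$-fluctuation of this quantity in $z$ is $\leq \theta\alpha_1^{1/2}\alpha_2\alpha_3$ by the almost-periodicity (pushing $\mu_B$ past $1_{A_3}$ and $1_{A_2}$ via $\|\cdot\|_{L^1}=1$), so for $\theta$ a small enough absolute constant there exists $z$ with $1_{A_1}\ast 1_{A_2}\ast 1_{A_3}\ast\mu_B(z) \geq \tfrac34\alpha_1\alpha_2\alpha_3$; but $1_{A_1}\ast 1_{A_2}\ast 1_{A_3}\ast\mu_B(z) = \E_{x\in B}\,1_{A_1}\ast 1_{A_2}\ast 1_{A_3}(z - x)$, and one more application of almost-periodicity (now of $1_{A_1}\ast 1_{A_2}\ast 1_{A_3}$ itself under $B$-translation, which follows from that of $1_{A_1}\ast\mu_{A_2}$ after multiplying by $\alpha_2$ and convolving with $1_{A_3}$) upgrades this averaged bound to the pointwise bound $\geq\tfrac12\alpha_1\alpha_2\alpha_3$ on all of $z+B$. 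The only genuinely delicate point is ensuring the error terms, which carry factors like $\alpha_1^{1/2}$, are dominated by $\alpha_1\alpha_2\alpha_3$; this is where $\theta$ must be chosen small depending only on absolute constants, and where one must be slightly careful that the Bohr set $B$ from Lemma~\ref{thm:CLSbohrCS} already has the right parameters so no further shrinking (beyond an absolute-constant dilation, harmless for the stated bounds) is needed.
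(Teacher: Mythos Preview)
Your overall approach is the paper's: apply Lemma~\ref{thm:CLSbohrCS} to $1_{A_1}\ast\mu_{A_2}$, then use the convolution identity and H\"older to pass to a pointwise lower bound on $1_{A_1}\ast\mu_{A_2}\ast\mu_{A_3}$. But the parameter choice $p=2$ with $\theta$ an absolute constant does not work, and this is precisely the step you flag as ``delicate'' without resolving. The pointwise comparison needed is
\[
\big|\,1_{A_1}\ast\mu_{A_2}\ast\mu_{A_3}(z) - 1_{A_1}\ast\mu_{A_2}\ast\mu_{A_3}(z+x)\,\big|
\;\leq\; \|(I-\tau_x)(1_{A_1}\ast\mu_{A_2})\|_{L^p}\,\|\mu_{A_3}\|_{L^q}
\;\leq\; \theta\,\alpha_1^{1/p}\,\alpha_3^{-1/p},
\]
and with $p=2$ the right-hand side is $\theta(\alpha_1/\alpha_3)^{1/2}$, which cannot be made $\leq\alpha_1/2$ by taking $\theta$ an absolute constant. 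Your detour through $1_{A_1}\ast 1_{A_2}\ast 1_{A_3}\ast\mu_B$ does not help: the almost-periodicity you inherit for the triple convolution is still only in $L^2$, and the upgrade from ``large on average over $B$'' to ``large at every point of $z+B$'' requires exactly the pointwise (i.e.\ $L^\infty$-type) estimate above. A further symptom: with $p=2$ and $\theta\asymp 1$ the dimension bound from Lemma~\ref{thm:CLSbohrCS} is $d\ll(\log\tfrac{2}{\alpha_1\alpha_2})^3$, not the $\alpha_1^{-2}(\log)^3$ you wrote --- the missing $\alpha_1^{-2}$ is a sign that the parameters have not been chosen correctly.

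The paper fixes this by taking $p=2+\log\alpha_3^{-1}$, so that $\alpha_3^{-1/p}\leq e$, and $\theta=\alpha_1/(2e)$, so that the error is at most $\alpha_1/2$. Then one simply picks $z$ with $1_{A_1}\ast\mu_{A_2}\ast\mu_{A_3}(z)\geq\alpha_1$ (which exists since this is the average) and concludes directly for every $z+x$ with $x\in B$; no passage through $\mu_B$ is needed. These choices give $d\leq Cp\theta^{-2}(\log\tfrac{2}{\theta\alpha_1\alpha_2})^3\leq C\alpha_1^{-2}(\log\tfrac{2}{\alpha_1\alpha_2\alpha_3})^4$, and this is where both the factor $\alpha_1^{-2}$ and the fourth logarithm in the stated dimension bound come from.
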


\begin{proof}
Apply Lemma~\ref{thm:CLSbohrCS} to $A_1$ and $A_2$
with parameters $p$ and $\theta$ to be determined later.
This yields a Bohr set $B$ with dimension 
$d \leq C p\theta^{-2} \big(\! \log \tfrac{2}{\theta\alpha_1\alpha_2} \big)^3$
and radius $\delta \geq c(\theta\alpha_1\alpha_2/p)^C$
such that
\begin{align}
	\label{eq:CLSalmostp}
	&\phantom{(x \in B)} &
	&\| ( I - \tau_x ) 1_{A_1} \ast \mu_{A_2} \|_{L^p} 
	\leq \theta\alpha_1^{1/p}
	&&(x \in B).
\end{align}
Let $z \in G$ and $x \in B$ and observe that
\begin{align*}
		1_{A_1} \ast \mu_{A_2} \ast \mu_{A_3} (z)
		- 1_{A_1} \ast \mu_{A_2} \ast \mu_{A_3} (z + x)
		= \langle\, (I - \tau_x ) 1_{A_1} \ast \mu_{A_2} , 
		\tau_{-z} \mu_{-A_3} \rangle_{L^2}.
\end{align*}
Applying successively Hölder's inequality 
and \eqref{eq:CLSalmostp} we have therefore
\begin{align}
	\notag
	| 	1_{A_1} \ast \mu_{A_2} \ast \mu_{A_3} (z)
	- 1_{A_1} \ast \mu_{A_2} \ast \mu_{A_3} (z + x) | 
	&\leq \| (I-\tau_x) 1_{A_1} \ast \mu_{A_2} \|_{L^p} \|\mu_{A_3}\|_{L^q} \\
	\notag
	&\leq \theta (\alpha_1/\alpha_3)^{1/p} \\	
	\label{eq:CLSdiffcvl}
	&\leq \theta \alpha_3^{-1/p}
\end{align}
Since $\E_{z \in G} 1_{A_1} \ast \mu_{A_2} \ast \mu_{A_3} (z) = \alpha_1$,
we may pick $z$ so that 
$1_{A_1} \ast \mu_{A_2} \ast \mu_{A_3} (z) \geq \alpha_1$.
Choosing $p = 2 + \log \alpha_3^{-1}$ and 
$\theta = \alpha_1 / 2e$,
we have $\theta \alpha_3^{-1/p} \leq \alpha_1 / 2$,
and by \eqref{eq:CLSdiffcvl} we conclude that
$1_{A_1} \ast \mu_{A_2} \ast \mu_{A_3} (z+x) \geq \alpha_1/2$,
where $x \in B$ is arbitrary.
\end{proof}

We may now quickly derive Theorem~\ref{thm:AP0},
which we reproduce below with adjusted notation
for convenience.

\begin{proposition*}[Theorem~\ref{thm:AP0}]
	\label{thm:AP0restated}
	Suppose that $A_1,A_2,A_3$ are subsets of $\Z/N\Z$ of 
	respective densities $\alpha_1,\alpha_2,\alpha_3$
	and write $\wt{\alpha}=\alpha_1\alpha_2\alpha_3$.
	Then there exist absolute constants $c > 0$ and $C > 0$ such that
	$ A_1 + A_2 + A_3 $ contains an arithmetic progression of length at least
	\begin{align*}
		N^{ c \alpha_1^2 / (\log 2\wt{\alpha}^{-1})^4 }
		\quad \text{if} \quad
		\alpha_1 (\log 2\wt{\alpha}^{-1})^{-5/2} 
		\geq C(\log N)^{-1/2}
	\end{align*}
	such that each element of the progression has at least
	$\tfrac{1}{2} \wt{\alpha} N^2$ representations as a sum.
\end{proposition*}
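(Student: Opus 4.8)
The plan is to read off Theorem~\ref{thm:AP0} from Proposition~\ref{thm:B0} by locating an honest arithmetic progression inside the Bohr translate $z+B$ furnished there. First I would apply Proposition~\ref{thm:B0} to $A_1,A_2,A_3$, which produces $z \in G$ and a Bohr set $B$ of dimension $d \leq C\alpha_1^{-2}(\log 2\wt{\alpha}^{-1})^4$ and radius $\delta \geq c\wt{\alpha}^C$ such that $1_{A_1}\ast 1_{A_2}\ast 1_{A_3}(y) \geq \tfrac12\wt{\alpha}$ for every $y \in z+B$. Unwinding the normalisation of the convolution (recall $f\ast g(x) = \E_{t} f(t)g(x-t)$, so $1_{A_1}\ast 1_{A_2}\ast 1_{A_3}(y) = N^{-2}$ times the number of triples $(x_1,x_2,x_3)\in A_1\times A_2\times A_3$ with $x_1+x_2+x_3=y$), this is precisely the assertion that every $y\in z+B$ has at least $\tfrac12\wt{\alpha}N^2$ representations as $x_1+x_2+x_3$ with $(x_1,x_2,x_3)\in A_1\times A_2\times A_3$; in particular $z+B\subseteq A_1+A_2+A_3$, so it is enough to exhibit a long progression inside $z+B$, equivalently inside $B$.

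Next, after replacing $B$ by a suitable dilate so that its radius is smaller than $\pi$ — this preserves $d$ and changes the lower bound on $\delta$ only by an absolute factor, hence is harmless — I would invoke Lemma~\ref{thm:APinbohrset} to obtain an arithmetic progression of length at least $(2\pi)^{-1}\delta N^{1/d}$ contained in $B$. Translating this progression by $z$ yields an arithmetic progression of the same length inside $A_1+A_2+A_3$, each of whose elements inherits the representation count from the previous step, which gives the claimed counting lemma.

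It then remains to bound $(2\pi)^{-1}\delta N^{1/d}$ from below. Taking logarithms and inserting $d \leq C\alpha_1^{-2}(\log 2\wt{\alpha}^{-1})^4$ and $\delta \geq c\wt{\alpha}^C$ (so $\log\delta \geq -C\log 2\wt{\alpha}^{-1}$), the length is at least
\begin{align*}
	\exp\Big( \frac{c\,\alpha_1^2\log N}{(\log 2\wt{\alpha}^{-1})^4} - C\log 2\wt{\alpha}^{-1} - C \Big).
\end{align*}
The density hypothesis $\alpha_1(\log 2\wt{\alpha}^{-1})^{-5/2}\geq C(\log N)^{-1/2}$ is, after squaring, equivalent to $\alpha_1^2\log N \geq C^2(\log 2\wt{\alpha}^{-1})^5$, so the main term $c\alpha_1^2\log N/(\log 2\wt{\alpha}^{-1})^4$ is itself $\geq cC^2\log 2\wt{\alpha}^{-1}$; choosing the constant in the hypothesis large enough, it dominates the two lower-order terms (using also $\log 2\wt{\alpha}^{-1}\geq\log 2$), and the length is at least $\exp\big(c\alpha_1^2\log N/(\log 2\wt{\alpha}^{-1})^4\big) = N^{c\alpha_1^2/(\log 2\wt{\alpha}^{-1})^4}$.

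As for difficulty, essentially all the work has already been carried out in Proposition~\ref{thm:B0}, which in turn rests on the Croot--Laba--Sisask almost-periodicity of Lemma~\ref{thm:CLSbohrCS}; the argument above is just bookkeeping. The only point requiring care is the final estimate: one must check that the factor $\delta\geq c\wt{\alpha}^C$, which only shrinks the progression, is swamped by the gain $N^{1/d}$, and this is exactly the balance the hypothesis $\alpha_1(\log 2\wt{\alpha}^{-1})^{-5/2}\geq C(\log N)^{-1/2}$ is calibrated to provide. One should also recall that the choices $p = 2+\log\alpha_3^{-1}$ and $\theta\asymp\alpha_1$ feeding Proposition~\ref{thm:B0} are what collapse all the logarithmic terms there into powers of $\log 2\wt{\alpha}^{-1}$, so no extraneous $\log\log N$ factors appear.
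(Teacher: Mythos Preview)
Your proposal is correct and follows essentially the same route as the paper: apply Proposition~\ref{thm:B0}, extract a progression from the Bohr set via Lemma~\ref{thm:APinbohrset}, and balance the $\delta$ loss against the $N^{1/d}$ gain using the density hypothesis. Your write-up is in fact more explicit than the paper's on two minor points (unwinding the normalised convolution to get the representation count, and noting the dilation to ensure $\delta<\pi$), but the argument is the same.
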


\begin{proof}
Apply Proposition~\ref{thm:B0} to obtain a Bohr
set $B$ and an element $z \in G$ such that
$d \leq C\alpha_1^{-2} (\log 2\wt{\alpha}^{-1})^4$,
$\delta \geq c\wt{\alpha}^C$ and
$1_{A_1} \ast 1_{A_2} \ast 1_{A_3} (y) \geq \tfrac{1}{2}\wt{\alpha}$
for every $y \in z+B$.
By Lemma~\ref{thm:APinbohrset} we may find
an arithmetic progression $P \subset B$ of size
\begin{align*}
	|P| \geq \exp\bigg( 
	\frac{ c \alpha_1^2 \log N }{ (\log 2\wt{\alpha}^{-1})^4 }
	- C \log 2\wt{\alpha}^{-1} \bigg).
\end{align*}
Restricting to 
$\alpha_1^2 (\log 2\wt{\alpha}^{-1})^{-5} \geq C'(\log N)^{-1}$
with $C'$ large enough we see that 
$z+P$ is the desired arithmetic progression.
\end{proof}

\section{Preliminaries on the density-increment strategy}
\label{sec:densityincr}

The proof of Theorems~\ref{thm:AP1} and \ref{thm:AP2}
is based on the density-increment strategy used by Bourgain 
\cite{bourgainroth1,bourgainroth2} to obtain 
good bounds in Roth's theorem \cite{roth}
and later considerably expanded by Sanders 
in \cite{sandersroth2,sandersroth1}.
The base of this theory is best presented in \cite{sandersBR},
while the more advanced techniques specific to Roth's theorem 
may be found in \cite{sandersroth2,sandersroth1}.
We also use a recent refinement of those by Bloom \cite{bloom}. 
In this section we collect the main facts that we need from these references.

We first need a special case of
\cite[Lemmas~4.6 and 6.3]{sandersroth1}, which
together constitute a local version of 
Chang's spectral lemma
\cite[Lemmas~3.1 and 3.4]{chang}.

\begin{lemma}[Local spectrum annihilation]
	\label{thm:spectrumannihilation}
	Let ${ \eps \in (0,1] }$ be a parameter.
	Let $B$ be a regular Bohr set and suppose that
	$X \subset B$ has relative density $\tau$.
	Then there exists a regular Bohr set $B' \leq B$ with
	\begin{align*}
		d' \leq d+C\eps^{-2} \log 2\tau^{-1}
		\quad\text{and}\quad
		\delta' \geq c \delta / (d^2 \eps^{-2} \log 2\tau^{-1})
	\end{align*}
	such that $|1 - \gamma (x)| \leq \frac{1}{2}$ 
	for every $\gamma \in \Spec_\eps(\mu_X)$ and $x \in B'$.
\end{lemma}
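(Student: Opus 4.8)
The plan is to derive this as the special case of the local Chang-type lemma of Sanders \cite[Lemmas~4.6 and 6.3]{sandersroth1} in which one only asks for a fixed quality $\tfrac12$ of spectral annihilation, transcribed into the present normalization. Recall the mechanism: from $\Spec_\eps(\mu_X)$ one extracts a maximal \emph{dissociated} subset $\Lambda$, that is, a maximal set of characters admitting no nontrivial relation $\prod_{\lambda \in \Lambda} \lambda^{n_\lambda} = 1$ with exponents $n_\lambda \in \{-1,0,1\}$. The classical Chang lemma \cite{chang} would bound $|\Lambda|$ by $C\eps^{-2} \log 2(\tau\, m_G(B))^{-1}$ via Rudin's inequality on $G$; the point of the local refinement is to obtain instead $|\Lambda| \leq C\eps^{-2}\log 2\tau^{-1}$, which is achieved by testing Rudin's inequality against $\mu_{B''}$-weighted exponential sums over a suitable sub-Bohr set $B'' \leq B$ rather than over all of $G$, the regularity of the Bohr sets in play keeping the error terms under control via Lemma~\ref{thm:bohrsetsregaveraging}. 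By maximality of $\Lambda$, every $\gamma \in \Spec_\eps(\mu_X)$ is then a product $\prod_{\lambda \in \Lambda} \lambda^{n_\lambda}$ with $n_\lambda \in \{-1,0,1\}$.

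With $\Lambda$ in hand I would set $\Gamma' = \Gamma \cup \Lambda$, so that $d' = |\Gamma'| \leq d + |\Lambda| \leq d + C\eps^{-2}\log 2\tau^{-1}$, and choose the radius $\delta'$ small enough that simultaneously $\delta' \leq \delta$ — which guarantees $B(\Gamma',\delta') \leq B$ — and $\delta' \leq 1/(2|\Lambda|)$. For $x \in B(\Gamma',\delta')$ one has $|1-\lambda(x)| \leq \delta'$ for each $\lambda \in \Lambda$, so that, using $|1 - \prod_i w_i^{\pm 1}| \leq \sum_i |1-w_i|$ for unimodular $w_i$, every $\gamma \in \Spec_\eps(\mu_X)$ satisfies $|1-\gamma(x)| \leq \sum_{\lambda\in\Lambda}|1-\lambda(x)| \leq |\Lambda|\delta' \leq \tfrac12$. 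Finally, invoking Lemma~\ref{thm:bohrsetsexistence}, I would pass to a regular dilate $B(\Gamma', \kappa\delta')$ with $\kappa \in [\tfrac12,1)$; this only strengthens the annihilation property and costs at most a factor $2$ in the radius. Combining with the radius bound coming out of the local Rudin step (where the dimension $d$ of $B$ enters through the covering and averaging estimates, quadratically) yields $\delta' \geq c\delta/(d^2\eps^{-2}\log 2\tau^{-1})$.

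The main obstacle is the local Chang/Rudin estimate used in the first paragraph: securing $|\Lambda| \ll \eps^{-2}\log 2\tau^{-1}$ while limiting the loss in radius to a factor $d^2 \eps^{-2}\log 2\tau^{-1}$, in contrast to the far weaker bound that a direct application of Chang's lemma to $X$ viewed as a subset of $G$ would produce — the whole reason a \emph{local} statement is needed. Since this is exactly the content of \cite[Lemmas~4.6 and 6.3]{sandersroth1}, the work reduces to checking that our hypotheses fall within their scope and that the parameters specialize as stated; the rest is routine bookkeeping.
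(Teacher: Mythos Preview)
Your proposal is correct and follows the same route as the paper: both arguments reduce the lemma to \cite[Lemmas~4.6 and 6.3]{sandersroth1} and then choose parameters so that the annihilation quality is $\tfrac12$ and the resulting Bohr set is regular. One small expository point: your sketch presents $\Lambda$ as a classically maximal dissociated set with an \emph{exact} span property and attributes the $d^2$ loss to the local Rudin step, whereas in Sanders' framework $\Lambda$ arises from the relative-entropy notion, the span identity holds only approximately (the error term $\rho d^2(k+1)$ in Lemma~6.3 is precisely where the $d^2$ enters), and Lemma~4.6 itself carries no $d$-dependence --- but since you ultimately defer to those lemmas this does not affect the argument.
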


\begin{proof}
Write $B=B(\Gamma,\delta)$ and
let $\Delta = \Spec_\eps (\mu_X)$.
By Sanders \cite[Lemma~4.6]{sandersroth1},
$\Delta$ has $(1,\mu_B)$-relative entropy 
$k \ll \eps^{-2}\log2\tau^{-1}$
(see \cite{sandersroth1} for the definition of this concept);
note in passing that, by the definition of entropy, $k \geq 1$.
Applying \cite[Lemma~6.3]{sandersroth1}
to $\Delta$ with $\eta=1$, we may further find a set $\Lambda$
of size at most $k$ such that,
for every $\nu\in (0,1), \rho \leq c/(dk)$, and 
$\gamma \in \Delta$,
\begin{align*}
	|1-\gamma(x)| &\ll k\nu + \rho  d^2 (k+1)
	\quad\text{uniformly in}\quad
 	x \in B( \Gamma \cup \Lambda, \min(\rho \delta, 2\nu)). 
\end{align*}
Choosing $\rho = c/(d^2 k)$ and $\nu = c/k$ with $c$
small enough we see that $|1-\gamma(x)| \leq \tfrac{1}{2}$
for $x \in B(\Gamma \cup \Lambda,c\delta/d^2 k) \eqqcolon \wt{B}$, 
and we are done upon choosing $B'=\wt{B}_\kappa$
with $\kappa \in \big[\tfrac{1}{2},1\big)$ chosen via 
Lemma~\ref{thm:bohrsetsexistence} 
such that $\wt{B}$ is regular.
\end{proof}

Note that, as in \cite{sandersAPs}, we need to keep
track of the radius of the Bohr set rather than its size,
since we are looking for arithmetic progressions
such as given by Lemma~\ref{thm:APinbohrset}.
The following is \cite[Lemma~3.8]{sandersroth2}
where we used the Bohr set given by 
Lemma~\ref{thm:spectrumannihilation} in the proof instead.
This lemma forms the backbone of the density-increment strategy.

\begin{lemma}[$L^2$ density-increment]
	\label{thm:L2increment}
	Let $\nu,\eta,\rho \in (0,1]$ be parameters.
	Let $B$ and $\dot{B} \leq B_\rho$ be regular Bohr sets. 
	Suppose that $A \subset B$ has relative density $\alpha$
	and $X \subset \dot{B}$ has relative density $\tau$.
	Write $f_A = 1_A - \alpha 1_B$, and suppose that
	$\rho \leq c\nu\alpha/d$ and
	\begin{align*}
		\sum_{\gamma\in\Spec_{\eta}(\mu_X)} | \wh{f_A}(\gamma) |^2 
		\geq \nu\alpha^2 m_G(B).
	\end{align*}
	Then there exists a regular Bohr set $\breve{B} \leq \dot{B}$ such that
	${ \| 1_A \ast \mu_{\breve{B}} \|_{\infty} \geq (1 + c\nu)\alpha }$,
	\begin{align*}
		\breve{d} 		\leq \dot{d} + C\eta^{-2}\log 2\tau^{-1}
		\quad\text{and}\quad 
		\breve{\delta} 	\geq c \dot{\delta} / ( \dot{d}^2 \eta^{-2} \log 2\tau^{-1}).
	\end{align*}
\end{lemma}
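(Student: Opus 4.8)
The argument is the standard $L^2$ density-increment, with the spectral input supplied by Lemma~\ref{thm:spectrumannihilation}. \textbf{Step 1.} I would first apply Lemma~\ref{thm:spectrumannihilation} to the set $X \subset \dot{B}$, taking its parameter to be $\eta$. This produces a regular Bohr set $\breve{B} \leq \dot{B}$ with $\breve{d} \leq \dot{d} + C\eta^{-2}\log 2\tau^{-1}$ and $\breve{\delta} \geq c\dot{\delta}/(\dot{d}^2\eta^{-2}\log 2\tau^{-1})$ --- exactly the parameters claimed --- with the property that $|1-\gamma(x)| \leq \tfrac12$ for every $\gamma \in \Spec_\eta(\mu_X)$ and $x \in \breve{B}$. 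For such $\gamma$ this forces $\re\gamma(x) \geq \tfrac78$ on $\breve{B}$, hence $|\wh{\mu_{\breve{B}}}(\gamma)| \geq \re\wh{\mu_{\breve{B}}}(\gamma) \geq \tfrac78$. \textbf{Step 2.} By Parseval and the hypothesis this yields a lower bound for the $L^2$-mass of $f_A \ast \mu_{\breve{B}}$:
\[
	\| f_A \ast \mu_{\breve{B}} \|_{L^2}^2
	= \sum_{\gamma} |\wh{f_A}(\gamma)|^2\,|\wh{\mu_{\breve{B}}}(\gamma)|^2
	\;\geq\; \tfrac12 \sum_{\gamma \in \Spec_\eta(\mu_X)} |\wh{f_A}(\gamma)|^2
	\;\geq\; \tfrac12\,\nu\alpha^2 m_G(B).
\]

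\textbf{Step 3.} Next I would localise this to the bulk of $B$. Put $g = 1_A \ast \mu_{\breve{B}} \geq 0$ and $M = \|g\|_\infty$; we may assume $M \leq 2\alpha$, as otherwise the conclusion is immediate (since $\nu \leq 1$). Because $\breve{B} \subseteq \dot{B} \subseteq B_\rho$ and $B$ is regular, one has the set inclusions $B_{1-\rho} + B_\rho \subseteq B \subseteq B_{1+\rho}$, so $1_B \ast \mu_{\breve{B}} \equiv 1$ on $B_{1-\rho}$ --- whence $f_A \ast \mu_{\breve{B}} = g - \alpha$ there --- while $f_A \ast \mu_{\breve{B}}$ is supported in $B_{1+\rho}$ and bounded there by $M + \alpha \leq 3\alpha$. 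By the regularity inequality \eqref{eq:bohrsetsregineq}, $m_G(B_{1+\rho} \setminus B_{1-\rho}) \leq 2 C_0 \rho d\, m_G(B)$, so the contribution of this annulus to $\|f_A \ast \mu_{\breve{B}}\|_{L^2}^2$ is at most $C\rho d\,\alpha^2 m_G(B)$. Invoking the hypothesis $\rho \leq c\nu\alpha/d$ with $c$ small, this annular contribution is $\leq \tfrac14\nu\alpha^2 m_G(B)$, so that
\[
	\E_{x \in G}\big[\, 1_{B_{1-\rho}}(x)\,(g(x)-\alpha)^2 \,\big]
	\;\geq\; \tfrac14\,\nu\alpha^2 m_G(B).
\]

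\textbf{Step 4.} Finally, an elementary convexity argument extracts the increment. From $0 \leq g \leq M$ we get $\E[1_{B_{1-\rho}}g^2] \leq M\,\E[1_{B_{1-\rho}}g]$; from $\E_{x\in G}\, g = m_G(A) = \alpha\, m_G(B)$ together with $g \geq 0$, $\Supp(g) \subseteq B_{1+\rho}$ and \eqref{eq:bohrsetsregineq} we get $\alpha\,m_G(B) \geq \E[1_{B_{1-\rho}}g] \geq \alpha\,m_G(B) - C\rho d\,\alpha\,m_G(B)$. Expanding $\E[1_{B_{1-\rho}}(g-\alpha)^2] = \E[1_{B_{1-\rho}}g^2] - 2\alpha\,\E[1_{B_{1-\rho}}g] + \alpha^2 m_G(B_{1-\rho})$ and inserting these bounds (taking care with signs, using $M \leq 2\alpha$), the previous display rearranges to $\tfrac14\nu\alpha \leq M - \alpha + C\rho d\,\alpha \leq M - \alpha + Cc\,\nu\alpha$, so that choosing $c$ small enough yields $M = \|1_A \ast \mu_{\breve{B}}\|_\infty \geq (1+c\nu)\alpha$, as required. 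The one place where care is genuinely needed is Step 3 together with the sign bookkeeping in Step 4: one must confirm that every error term arising from the non-smoothness of $1_B$ across $B_{1+\rho}\setminus B_{1-\rho}$ --- appearing both in the $L^2$-mass estimate and in the comparison of $\E[1_{B_{1-\rho}}g]$ with $\E_{x\in G}g$ --- is dominated by $\tfrac14\nu\alpha^2 m_G(B)$, which is exactly what the calibration $\rho \leq c\nu\alpha/d$ is there to guarantee.
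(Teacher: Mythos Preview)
Your argument is correct and is precisely the approach the paper intends: the paper does not give a self-contained proof but refers to \cite[Lemma~3.8]{sandersroth2} with the Bohr set replaced by that of Lemma~\ref{thm:spectrumannihilation}, which is exactly what you carry out in Steps~1--4. The quantitative bookkeeping (the $7/8$ lower bound on $|\wh{\mu_{\breve{B}}}(\gamma)|$, the annular error controlled via $\rho \leq c\nu\alpha/d$, and the convexity extraction of the increment) is all in order.
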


The slightly different shape of the density-increment lemma
above affects in a minor way the statement of two
results we introduce next.
The first is the Katz-Koester transform developed by 
Sanders \cite{sandersroth2};
the following is Proposition~4.1 from there.

\begin{lemma}[Katz-Koester transform]
	\label{thm:KK2sets}
	Let $\rho,\rho' \in (0,1)$ be parameters. 
	Let $B$ be a regular Bohr set, assume that
	$B'=B_{\rho}$ is regular and let $B''=B'_{\rho'}$.
	Suppose that $A \subset B$ has relative density $\alpha$ 
	and $A' \subset B'$ has relative density $\alpha'$.
	Assume that $\rho \leq c \alpha / d$ and $\rho' \leq c\alpha'/d$.
	Then either
	\begin{enumerate}
		\item	there exists a regular Bohr set $\breve{B} \leq B'$ such that
				$\| 1_A \ast \mu_{\breve{B}} \|_{\infty} \geq (1 + c) \alpha$,
				\begin{align*}
					\breve{d} \leq d + C \alpha^{-1} \log 2\alpha'^{-1} 
					\quad\text{and}\quad
					\breve{\delta} \geq c\rho(\alpha\alpha'/d)^C \delta,
				\end{align*}
		\item	or there exist $L \subset B$ with relative density $\lambda$
				and $S \subset B''$ with relative density $\sigma$, 
				such that $\lambda \gg 1$, 
				$\sigma \geq e^{-C\alpha^{-1}\log 2\alpha'^{-1}}$ and
				\begin{align*}
					1_{L} \ast 1_{S} \leq C\alpha^{-1}\, 1_A \ast 1_{A'}.
				\end{align*}
	\end{enumerate}
\end{lemma}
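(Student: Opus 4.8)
The statement is a ``density increment or Katz--Koester structure'' dichotomy, and the plan is to split according to whether $1_A \ast \mu_{B'}$ has a large sup-norm on $B$, and then, in the structured case, to carry out the Katz--Koester transform proper. First I would note that by regularity of $B$ and the hypothesis $\rho \le c\alpha/d$ one has $\E_{x \in B} 1_A \ast \mu_{B'}(x) = \alpha\,(1 + O(\rho d))$, so this average lies within a fixed constant factor of $\alpha$. If $\|1_A \ast \mu_{B'}\|_{\infty} \ge (1+c)\alpha$ then alternative~(i) already holds with $\breve B = B'$: the set $B'$ is regular by hypothesis, it has the frequency set $\Gamma$ of $B$ and radius $\rho\delta$, so the bounds $\breve d = d \le d + C\alpha^{-1}\log 2\alpha'^{-1}$ and $\breve\delta = \rho\delta \ge c\rho(\alpha\alpha'/d)^C\delta$ hold trivially. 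Hence I may assume $\|1_A \ast \mu_{B'}\|_{\infty} < (1+c)\alpha$; Markov's inequality applied to the nonnegative function $(1+c)\alpha - 1_A \ast \mu_{B'}$ then shows that $\{x \in B : 1_A \ast \mu_{B'}(x) \ge \tfrac12\alpha\}$ has relative density at least $1 - Cc$ in $B$.

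The remaining case is handled through an $L^2$ step and then a covering construction. I would apply Lemma~\ref{thm:L2increment} with $X = A' \subset B' = B_\rho$ (so $\tau = \alpha'$), a fixed $\nu \asymp 1$, and spectral parameter $\eta \asymp \alpha^{1/2}$; this choice is forced by Parseval, since $\|1_A - \alpha 1_B\|_{L^2}^2 = \alpha(1-\alpha)m_G(B)$. Either the spectral sum $\sum_{\gamma \in \Spec_\eta(\mu_{A'})} |\wh{f_A}(\gamma)|^2$ exceeds $\nu\alpha^2 m_G(B)$, and Lemma~\ref{thm:L2increment} puts us in alternative~(i) with exactly the claimed bounds $\breve d \le d + C\eta^{-2}\log 2\tau^{-1} = d + C\alpha^{-1}\log 2\alpha'^{-1}$ and $\breve\delta \ge c\rho(\alpha\alpha'/d)^C\delta$ (restoring regularity via Lemma~\ref{thm:bohrsetsexistence} and controlling radii through Lemma~\ref{thm:bohrsetgrowth}); or that sum is small, and then Parseval together with $1_B \ast \mu_{A'} = 1$ on $B_{1-\rho}$ (regularity, using $\rho \le c\nu\alpha/d$) gives $\|1_A \ast \mu_{A'} - \alpha 1_B\|_{L^2}^2 \ll \nu\alpha^2 m_G(B)$, so that Chebyshev produces a subset $G_0 \subset B$ of relative density $\ge 1 - C\nu$ on which $1_A \ast \mu_{A'}(x) \ge \tfrac12\alpha$, equivalently $1_A \ast 1_{A'}(x) \gg \alpha\alpha'\, m_G(B')$.

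It remains, in this last situation, to produce the pair $(L, S)$: this is the Katz--Koester transform itself. The plan is a Ruzsa-type covering of $B''$ by translates of an $A'$-related set of density $\asymp \alpha'$: since each such translate captures only an $\asymp \alpha'$-proportion, and one must cover $B''$ robustly so that enough of $G_0$ is reached, one expects to need $O(\alpha^{-1})$ translates; taking $S \subset B''$ to be essentially the intersection of the corresponding translated Bohr neighbourhoods yields a set of density $\sigma \ge (\alpha')^{O(\alpha^{-1})} = e^{-C\alpha^{-1}\log 2\alpha'^{-1}}$, while taking $L$ to be the set of $x \in B$ whose translate $x + S$ is absorbed by the covering yields $\lambda \gg 1$. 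By construction $L + S$ lands inside $G_0$, so comparing the crude bound $1_L \ast 1_S \le m_G(S) \ll \alpha' m_G(B')$ with the lower bound $1_A \ast 1_{A'} \gg \alpha\alpha' m_G(B')$ on $G_0$ yields the pointwise inequality $1_L \ast 1_S \le C\alpha^{-1}\, 1_A \ast 1_{A'}$.

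I expect the genuine obstacle to be this last step: arranging $L$ to have relative density $\gg 1$ in $B$ --- the naive guess $\{x : x + A' \subset A + A'\}$ is useless here, since $A + A'$ may miss a set of constant relative density in $B$ which is far larger than $A'$ --- while keeping $S \subset B''$ with the stated, exponentially small but explicit density, and while making sure that every auxiliary density increment thrown off during the $O(\alpha^{-1})$-fold covering still respects the radius lower bound $c\rho(\alpha\alpha'/d)^C\delta$; a careless iteration would contract the radius much faster than permitted, which is precisely why each step is routed through the form of Lemma~\ref{thm:spectrumannihilation} that keeps the radius polynomial in the current parameters. The remaining ingredients --- the averaging identities, the size and regularity estimates for Bohr sets, and the Parseval bookkeeping --- are routine given Section~\ref{sec:bohrsets} and Lemmas~\ref{thm:spectrumannihilation} and~\ref{thm:L2increment}.
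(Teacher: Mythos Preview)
The paper does not supply its own proof of this lemma: it is quoted as Proposition~4.1 of Sanders' Roth paper, with only the remark that the form of Lemma~\ref{thm:L2increment} mildly affects the stated bounds. So there is no in-paper argument to compare against, and the question is whether your outline reconstructs Sanders' proof.

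Your first two phases are sound and match the intended setup. The sup-norm dichotomy on $1_A\ast\mu_{B'}$ disposes of the trivial case, and routing the $L^2$ alternative through Lemma~\ref{thm:L2increment} with $X=A'$, $\tau=\alpha'$ and $\eta\asymp\alpha^{1/2}$ produces exactly the dimension and radius bounds claimed in~(i). The Parseval--Chebyshev step giving a set $G_0\subset B$ of relative density $1-O(\nu)$ on which $1_A\ast 1_{A'}\gg\alpha\alpha'\,m_G(B')$ is also correct.

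The gap is in the third phase, which you rightly identify as the crux. As written, the sketch does not cohere: you speak of covering $B''$ by translates of an ``$A'$-related set'' but then take $S$ to be an intersection of ``translated Bohr neighbourhoods'', which are different objects; your heuristic for why $O(\alpha^{-1})$ (and not $O(\alpha'^{-1})$) translates are needed does not explain where $\alpha$ enters; and the assertion ``by construction $L+S$ lands inside $G_0$'' is exactly what has to be \emph{proved} and is not delivered by any covering you have described. In Sanders' argument the construction is an iteration: one refines $S$ step by step by intersecting with translates of $A'$ (hence each step costs a factor $\asymp\alpha'$ in $\sigma$), and at each step a failure of the pointwise inequality $1_L\ast 1_S\le C\alpha^{-1}1_A\ast 1_{A'}$ yields either a fresh density increment through Lemma~\ref{thm:L2increment} (feeding alternative~(i)) or a refinement of $S$ together with an additive $\Omega(\alpha)$ gain in an auxiliary bounded quantity; it is this last increment, not a covering count, that caps the number of steps at $O(\alpha^{-1})$ and hence gives $\sigma\ge e^{-C\alpha^{-1}\log 2\alpha'^{-1}}$. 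Your final pointwise comparison is then the right endgame, but it only becomes available once the iterative construction has forced $\mathrm{supp}(1_L\ast 1_S)$ into $G_0$.
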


A second result we import is a generalization 
of the above for three of more sets due to 
Bloom \cite{bloom}; the following
is a direct consequence of the case $k=2$ of
Theorem~6.1 from there.

\begin{lemma}[Katz-Koester transform for three sets]
	\label{thm:KK3sets}
	Let $\rho,\rho' \in (0,1)$ be parameters. 
	Let $B$ be a regular Bohr set, 
	suppose that $B' = B_\rho$ is regular 
	and let $B''=B'_{\rho'}$.
	Suppose that $A \subset B$ has relative density $\alpha$
	and $A'_1,A'_2 \subset B'$ have relative 
	densities $\alpha'_1,\alpha'_2$,
	and write $\gamma=\alpha\alpha'_1\alpha'_2$.
	Assume that $\rho \leq c\alpha/d$ and $\rho' \leq c\gamma/d$.
	Then either
	\begin{enumerate}
		\item	there exists a regular Bohr set $\breve{B} \leq B'$
				such that
				$\| 1_A \ast \mu_{\breve{B}} \|_{\infty} \geq (1+c) \alpha$,
				\begin{align*}
					\breve{d} \leq d + C \alpha^{-1/2} \log 2\gamma^{-1}
					\quad\text{and}\quad
					\breve{\delta} \geq c\rho(\gamma/d)^C \delta,
				\end{align*}
		\item	or there exist $L \subset B$ with relative density $\lambda$
				and $S_1,S_2 \subset B''$ 
				with relative densities $\sigma_1,\sigma_2$ 
				such that $\lambda \gg 1$,
				$\sigma_i \geq e^{ - C \alpha^{-1/2} \log 2\gamma^{-1} }\!$,
				and
				\begin{align*}
					1_{L} \ast 1_{S_1} \ast 1_{S_2} \leq 
					C \alpha^{-2}\, 1_{A} \ast 1_{A'_1} \ast 1_{A'_2}.
				\end{align*}
	\end{enumerate}
\end{lemma}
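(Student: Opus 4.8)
The plan is to deduce the lemma directly from the case $k = 2$ of \cite[Theorem~6.1]{bloom}, Bloom's several-set generalisation of the Katz--Koester transform of Sanders (whose two-set version is Lemma~\ref{thm:KK2sets}). First I would recall the statement of that result and apply it with ambient regular Bohr set $B$, distinguished set $A \subset B$, auxiliary sets $A'_1, A'_2 \subset B_\rho = B'$, and the two dilation parameters $\rho$ and $\rho'$ as here. Its conclusion is a dichotomy of exactly the shape asserted: either a density increment for $A$ on a regular sub-Bohr set of $B'$, or a pointwise lower bound for $1_A \ast 1_{A'_1} \ast 1_{A'_2}$ by a convolution $1_L \ast 1_{S_1} \ast 1_{S_2}$ with $L$ a positive proportion of $B$ and each $S_i \subset B'' = B'_{\rho'}$ of controlled, exponentially small relative density. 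Specialising to $k = 2$ identifies $\gamma = \alpha\alpha'_1\alpha'_2$ as the product density governing all the bounds; the exponent $\alpha^{-1/2}$ in the dimension increase $\breve d \leq d + C\alpha^{-1/2}\log 2\gamma^{-1}$ and in the density floor $\sigma_i \geq e^{-C\alpha^{-1/2}\log 2\gamma^{-1}}$ is then read off from the $k$-dependent bounds in \cite{bloom} (the case $k = 1$ recovering the exponent $\alpha^{-1}$ of Lemma~\ref{thm:KK2sets}).

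The substance of the argument is reconciling conventions. I would check that the normalisations of convolution, Fourier transform, relative density and Bohr-set radius fixed in Sections~\ref{sec:notation} and~\ref{sec:bohrsets} agree with those of \cite{bloom} up to the flexibility in $c$ and $C$; in particular that the admissibility hypotheses of \cite[Theorem~6.1]{bloom} reduce, for $k = 2$, to the constraints $\rho \leq c\alpha/d$ and $\rho' \leq c\gamma/d$ stated here, and that no stray factor of $|G|$ or $|B|$ is introduced by the differing normalisation of convolutions --- this is what pins down the constant $C\alpha^{-2}$, rather than merely $C$, in $1_L \ast 1_{S_1} \ast 1_{S_2} \leq C\alpha^{-2}\, 1_A \ast 1_{A'_1} \ast 1_{A'_2}$. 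I would also verify that Bloom's density-increment alternative already furnishes a regular Bohr set $\breve B \leq B'$ and a point at which $1_A \ast \mu_{\breve B}$ exceeds $(1 + c)\alpha$, which is precisely $\| 1_A \ast \mu_{\breve B} \|_\infty \geq (1 + c)\alpha$; should his output set fail to be regular, one passes to a regular dilate via Lemma~\ref{thm:bohrsetsexistence} and absorbs the factor into the constants. The radius bound $\breve\delta \geq c\rho(\gamma/d)^C\delta$ combines the factor $\rho$ lost in passing to $B' = B_\rho$ with the polynomial-in-$(\gamma,d)$ loss from the local Chang-type step underlying \cite[Theorem~6.1]{bloom}, and I would simply record the resulting exponents.

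The main obstacle, such as it is, is bookkeeping rather than mathematics: one must track the two dilation parameters $\rho, \rho'$ through Bloom's admissibility conditions, confirm that specialising his general-$k$ bounds to $k = 2$ yields exactly the exponents $\alpha^{-1/2}$ and the constant $C\alpha^{-2}$ claimed, and ensure that the difference in normalisation conventions introduces no hidden loss. No analytic input beyond \cite[Theorem~6.1]{bloom} is required.
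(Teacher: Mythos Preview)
Your proposal is correct and matches the paper's treatment exactly: the paper does not give a proof of this lemma but simply records it as ``a direct consequence of the case $k=2$ of Theorem~6.1'' from \cite{bloom}, which is precisely what you propose to do. Your additional remarks on reconciling normalisations and tracking the two dilation parameters are the natural content of such a deduction, and go somewhat beyond what the paper spells out.
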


Finally, we are going to make extensive use of the
Croot-Sisask lemma \cite{CS}, which says that
two-fold convolutions possess large sets of almost-periods.
This technique is particularly suited to prove asymmetric results
such as Theorems~\ref{thm:AP1} and \ref{thm:AP2}.
The slightly different version we quote is 
\cite[Lemma~4.3]{sandersBR} due to Sanders.

\begin{lemma}[Croot-Sisask lemma]
	\label{thm:CS}
	Let $p \geq 2$ and $\eps \in (0,1)$ be a pair of parameters.
	Let $f : G \rightarrow \C$ and $L \geq 1$ and 
	assume that $S$ and $T$ are subsets of $G$ 
	such that $|S+T| \leq L |S|$.
	Then there exist $t\in T$ and a set $X \subset T$ of
	size $|X| \geq (2L)^{-Cp/\eps^2} |T|$ 
	such that
	\begin{align*}
		\phantom{(t \in X)}&&
		\| f * \mu_{S} - \tau_y f * \mu_{S} \|_{L^p}
		&\leq \eps \| f \|_{L^p} & &(y \in X-t) .
	\end{align*}
\end{lemma}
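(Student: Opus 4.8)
The plan is to reproduce the probabilistic argument of Croot and Sisask \cite{CS}, combining a random-sampling estimate for convolutions with a pigeonhole step that exploits the hypothesis $|S+T| \le L|S|$. Write $m = |S|$. For a tuple $\vec s = (s_1,\dots,s_k) \in S^k$ and a point $t \in T$, introduce
\[
	g_{\vec s,t} = \tfrac1k \sum_{i=1}^k \tau_{-s_i - t} f
	\qquad\text{and}\qquad
	h_t = \tau_{-t}(f * \mu_S),
\]
noting that $g_{\vec s,t}$ is a $k$-fold sample approximation to $h_t$: if $s_1,\dots,s_k$ are drawn independently and uniformly from $S$, then $\E_{\vec s}\, g_{\vec s,t}(x) = h_t(x)$ for every $x$. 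The integer $k$ will be chosen of size $k \asymp p/\eps^2$.

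First I would establish the sampling estimate: for each fixed $t \in T$,
\[
	\E_{\vec s \in S^k} \big\| g_{\vec s,t} - h_t \big\|_{L^p}^p \le \Big( \tfrac{Cp}{k} \Big)^{p/2} \|f\|_{L^p}^p .
\]
Indeed, fixing $x$, the quantity $g_{\vec s,t}(x) - h_t(x)$ is the mean of $k$ i.i.d.\ mean-zero complex random variables, each of $p$-th moment at most $2^p\,\E_{s \in S} |f(x - s - t)|^p$ by Jensen's inequality applied to $|h_t(x)|^p$; the Marcinkiewicz--Zygmund (or Rosenthal) inequality for $p$-th moments of sums of independent mean-zero variables then yields the displayed bound after averaging over $x \in G$ and using translation-invariance of $\E_{x \in G}$. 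Choosing $k = \lceil Cp/\eps^2 \rceil$ with $C$ large and invoking Markov's inequality shows that, for each fixed $t \in T$, at least half the tuples $\vec s \in S^k$ satisfy $\| g_{\vec s,t} - h_t \|_{L^p} \le \tfrac{\eps}{2} \|f\|_{L^p}$. Calling such a pair $(\vec s, t)$ \emph{good}, the set $\mathcal G \subset S^k \times T$ of good pairs has $|\mathcal G| \ge \tfrac12 m^k |T|$.

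The key observation is that $g_{\vec s, t}$ depends on $(\vec s, t)$ only through the tuple $(s_1 + t,\dots,s_k + t) \in (S+T)^k$, so the map $(\vec s, t) \mapsto (s_1 + t,\dots,s_k + t)$ sends $\mathcal G$ into a set of cardinality at most $|S+T|^k \le (Lm)^k$. By pigeonhole there is a tuple $\vec a^\ast \in (S+T)^k$ whose preimage in $\mathcal G$ has size at least $|\mathcal G| / (Lm)^k \ge |T| / (2 L^k) \ge (2L)^{-Cp/\eps^2} |T|$ (using $L \ge 1$); as a good pair $(\vec s, t)$ with $s_i + t = a_i^\ast$ is determined by $t$ alone, this preimage is indexed by a set $X \subset T$ of that size. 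For any $t_1, t_2 \in X$ the functions $g_{\vec a^\ast - t_1,\, t_1}$ and $g_{\vec a^\ast - t_2,\, t_2}$ coincide — both equal $\tfrac1k \sum_i \tau_{-a_i^\ast} f$ — while each lies within $\tfrac{\eps}{2}\|f\|_{L^p}$ of $h_{t_1}$, resp.\ $h_{t_2}$, whence $\| h_{t_1} - h_{t_2} \|_{L^p} \le \eps \|f\|_{L^p}$. Fixing any $t \in X$, using that translation is an $L^p$-isometry, and noting $\tau_y(f * \mu_S) = \tau_y f * \mu_S$, this says precisely that $\| f * \mu_S - \tau_y f * \mu_S \|_{L^p} \le \eps \|f\|_{L^p}$ for every $y \in X - t$, which is the claim.

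The main obstacle is obtaining the correct dependence on $p$ in the sampling step: the moment inequality for sums of independent mean-zero variables must produce the factor $p^{p/2}$ (not a larger power of $p$), since this is exactly what pins down the exponent $p/\eps^2$ rather than something worse in the bound for $|X|$. This is a standard inequality but has to be applied with care; the remaining work — choosing $k$, tracking constants through Markov's inequality, and verifying that $|\mathcal G|/(Lm)^k$ simplifies to the stated form $(2L)^{-Cp/\eps^2}|T|$ — is routine bookkeeping.
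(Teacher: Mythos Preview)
Your argument is correct and is precisely the standard Croot--Sisask probabilistic sampling plus pigeonhole proof. Note, however, that the paper does not actually prove this lemma: it is quoted without proof from \cite[Lemma~4.3]{sandersBR} (with the underlying idea from \cite{CS}), and your write-up is essentially the argument one finds in those references.
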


This has the following familiar consequence, often
used implicitly throughout the literature.

\begin{lemma}[$L^p$-smoothing of convolutions]
	\label{thm:CSsmoothing}
	Let $p \geq 2$, $\ell \geq 1$, and $\theta \in (0,1)$  
	be parameters.
	Let $f : G \rightarrow \C$ and $L \geq 1$ and
	suppose that $S$ and $T$ are subsets of $G$ 
	such that $|S+T| \leq L |S|$.
	Then there exists a set $X \subset T$ of
	size $|X| \geq (2L)^{-Cp\ell^2/\theta^2} |T|$ 
	such that
	\begin{align*}
		\| f * \mu_{S} - f \ast \mu_{S} \ast \lambda_X^{(\ell)} \|_{L^p}
		\leq \theta \| f \|_{L^p}
	\end{align*}
	where $\lambda_X = \mu_X \ast \mu_{-X}$.
\end{lemma}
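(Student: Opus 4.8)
The plan is to obtain this as a routine repackaging of the Croot-Sisask lemma (Lemma~\ref{thm:CS}), exploiting the fact that convolution against $\mu_X$ or $\mu_{-X}$ is simply an average of translations. First I would apply Lemma~\ref{thm:CS} to $f$, $S$ and $T$ with parameter $\eps \coloneqq \theta/2\ell$, producing $t \in T$ and $X \subset T$ with $|X| \geq (2L)^{-Cp/\eps^2}|T| = (2L)^{-Cp\ell^2/\theta^2}|T|$ such that $\|g - \tau_y g\|_{L^p} \leq \eps\|f\|_{L^p}$ for every $y \in X - t$, where I abbreviate $g = f \ast \mu_S$. Next, since $\mu_X = m_G(X)^{-1}1_X$, one has $g \ast \mu_X = \E_{x \in X}\tau_{-x}g$ and $g\ast\mu_{-X} = \E_{x\in X}\tau_x g$; iterating these identities gives $g \ast \lambda_X^{(\ell)} = \E\,\tau_w g$, where the expectation is over $x_1,x_1',\dots,x_\ell,x_\ell'$ ranging independently in $X$ and $w = \sum_{i=1}^\ell(x_i'-x_i)$. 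Because $\lambda_X^{(\ell)}$ is a probability measure we have $g - g\ast\lambda_X^{(\ell)} = \E(g - \tau_w g)$, so Minkowski's inequality reduces everything to bounding $\|g - \tau_w g\|_{L^p}$ for each fixed admissible $w$.

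To bound $\|g - \tau_w g\|_{L^p}$ I would telescope along the partial sums $v_j = \sum_{i \leq j}(x_i' - x_i)$ (with $v_0 = 0$), writing $g - \tau_w g = \sum_{j=1}^\ell \tau_{v_{j-1}}\!\big(g - \tau_{x_j'-x_j}g\big)$ and using that translation is an $L^p$-isometry to get $\|g - \tau_w g\|_{L^p} \leq \sum_{j=1}^\ell\|g - \tau_{x_j'-x_j}g\|_{L^p}$. Each term is then split via $x_j'-x_j = (x_j'-t) - (x_j - t)$, with both summands lying in $X - t$, so that $\|g - \tau_{x_j'-x_j}g\|_{L^p} \leq \|g-\tau_{x_j'-t}g\|_{L^p}+\|g-\tau_{x_j-t}g\|_{L^p} \leq 2\eps\|f\|_{L^p}$ by the output of Lemma~\ref{thm:CS} and one more use of the isometry property. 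Altogether this yields $\|g - \tau_w g\|_{L^p} \leq 2\ell\eps\|f\|_{L^p} = \theta\|f\|_{L^p}$, and hence the same bound for $\|g - g\ast\lambda_X^{(\ell)}\|_{L^p}$.

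I do not anticipate any real obstacle here; this is a standard smoothing argument. The only points that call for a little care are the two bookkeeping steps above: verifying that $g\ast\mu_{\pm X}$ is genuinely an average of translations so that Minkowski's inequality applies cleanly to $\lambda_X^{(\ell)}$, and tracking the factor $2\ell$ lost in passing from single translations by elements of $X-t$ to translations by elements of the $\ell$-fold difference set of $X$ — which, once $\eps = \theta/2\ell$ is substituted into $(2L)^{-Cp/\eps^2}$, is precisely what produces the $\ell^2$ in the exponent of the size bound.
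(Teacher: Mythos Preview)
Your argument is correct and follows exactly the paper's approach: apply Lemma~\ref{thm:CS} with $\eps=\theta/(2\ell)$, telescope and use translation invariance to bound $\|f\ast\mu_S-\tau_w f\ast\mu_S\|_{L^p}\leq\theta\|f\|_{L^p}$ for each $w$ in the $\ell$-fold symmetric sumset of $X$, and then average via Minkowski. The paper records this in two sentences, but the content is identical to what you wrote.
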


\begin{proof}
	Apply Lemma~\ref{thm:CS} with parameter $\eps=\theta/(2\ell)$.
	By the triangle inequality and the translation invariance of $L^p$-norms,
	we have, for every $x_1,\dots,x_\ell,x'_1,\dots,x'_\ell \in X$:
	\begin{align*}
		\| f \ast \mu_S - \tau_{x_1-x'_1+\dots+x_\ell-x'_\ell} f \ast \mu_S \|_{L^p}
		\leq \theta \| f \|_{L^p}.
	\end{align*}
	By averaging over the numerous $x_i,x'_j$ and the
	triangle inequality we recover the result.
\end{proof}

\section{Proof of Theorems~\ref{thm:AP1} and \ref{thm:AP2}}
\label{sec:APdensityincr}

We are now ready to start with the proof of our main estimates.
In this section we introduce a new piece of notation 
to make computations more bearable:
to every Bohr set $B$ we associate the 
\textit{density parameter} $b=m_G(B)$.
We start with an easy consequence of regularity that
gives us some control on the size of scaled-down sets.

\begin{lemma}[Scaling lemma]
	\label{thm:scaling}
	Let $\rho \in (0,1)$ be a parameter.	
	Let $B$ be a regular Bohr set and
	$B' \subset B_\rho$.
	Suppose that $A \subset B$ has relative density $\alpha$
	and $\rho \leq c/d$, then
	\begin{align*}
		\| 1_A \ast \mu_{B'} \|_{\infty} 
		\geq (1 - O( \tfrac{\rho d}{\alpha} ) ) \alpha.
	\end{align*}
\end{lemma}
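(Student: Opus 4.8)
The statement asserts that if $B$ is regular, $B' \subset B_\rho$, $A \subset B$ has relative density $\alpha$, and $\rho \leq c/d$, then $\|1_A \ast \mu_{B'}\|_\infty \geq (1 - O(\rho d/\alpha))\alpha$. The natural approach is to bound the average of $1_A \ast \mu_{B'}$ over a suitable set and conclude that the maximum is at least this average. Since $1_A \ast \mu_{B'}(x) = \E_{y \in B'} 1_A(x+y) = m_{-B'}((A-x))$, and for $x \in B$ the point $x+y$ with $y \in B' \subset B_\rho$ lands inside $B_{1+\rho}$, it is cleanest to average over $x \in B$. We have
\begin{align*}
	\E_{x \in B} \, 1_A \ast \mu_{B'}(x)
	= \E_{y \in B'} \E_{x \in B} 1_A(x+y)
	= \E_{y \in B'} \frac{|A \cap (B - y)|}{|B|}.
\end{align*}
The plan is to show each inner term $|A \cap (B-y)|/|B|$ is at least $\alpha - O(\rho d)$.

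The key step is the comparison between $A \cap B$ and $A \cap (B-y)$ for $y \in B_\rho$. Since $A \subset B$, we have $|A \cap (B-y)| \geq |A| - |B \setminus (B-y)| = \alpha|B| - |B \setminus (B-y)|$. The symmetric difference of $B$ and $B-y$ is controlled by regularity: for $y \in B_\rho$, both $B$ and $B-y$ contain $B_{1-\rho}$ and are contained in $B_{1+\rho}$, so $|B \setminus (B-y)| \leq |B_{1+\rho}| - |B_{1-\rho}| \leq 2C_0 \rho d\, |B|$ by \eqref{eq:bohrsetsregineq} (using $\rho \leq 1/(C_0 d)$, which is implied by $\rho \leq c/d$ for suitable $c$). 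Hence $|A \cap (B-y)|/|B| \geq (\alpha - 2C_0 \rho d)$, uniformly in $y \in B'$, and averaging over $y$ gives $\E_{x \in B} 1_A \ast \mu_{B'}(x) \geq \alpha - O(\rho d) = (1 - O(\rho d/\alpha))\alpha$. Since $\|1_A \ast \mu_{B'}\|_\infty$ is at least any average of $1_A \ast \mu_{B'}$, in particular the average over $B$, the claimed bound follows.

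Alternatively, and perhaps more in the spirit of the surrounding arguments, one can phrase this via Lemma~\ref{thm:bohrsetsregaveraging}: since $\mathrm{Supp}(\mu_{B'}) \subset B_\rho$, we have $\|\mu_B \ast \mu_{B'} - \mu_B\|_{L^1} \leq C_1 \rho d$, and then $\langle 1_A, \mu_B \ast \mu_{B'} \rangle = \langle 1_A \ast \mu_{-B'}, \mu_B \rangle$ compares with $\langle 1_A, \mu_B \rangle = \alpha$ up to an error $\|1_A\|_\infty C_1 \rho d = O(\rho d)$; noting that $\mu_{-B'}$ versus $\mu_{B'}$ only changes things harmlessly (or one simply works with the symmetrized object, or replaces $B'$ by $-B'$ which is again a subset of $B_\rho$ since $B_\rho$ is symmetric). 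Either route reduces to the same regularity estimate. I do not anticipate a genuine obstacle here: the only point requiring mild care is checking that $\rho \leq c/d$ with the right $c$ gives $\rho \leq 1/(C_0 d)$ so that \eqref{eq:bohrsetsregineq} applies, and tracking that the error is additive $O(\rho d)$ before dividing by $\alpha$; both are routine.
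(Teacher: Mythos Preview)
Your proposal is correct and takes essentially the same approach as the paper: compute the average $\E_{x\in B}\,1_A\ast\mu_{B'}(x)=\langle 1_A,\mu_B\ast\mu_{-B'}\rangle_{L^2}$, compare to $\langle 1_A,\mu_B\rangle_{L^2}=\alpha$ via the regularity bound $\|\mu_B-\mu_B\ast\mu_{-B'}\|_{L^1}=O(\rho d)$, and bound the sup norm below by this average. Your ``alternative'' phrasing through Lemma~\ref{thm:bohrsetsregaveraging} is in fact exactly the paper's proof, while your first route simply unpacks that lemma by hand.
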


\begin{proof}
We have, by Lemma~\ref{thm:bohrsetsregaveraging},
\begin{align*}
	\E_{x \in B} 1_A \ast \mu_{B'} (x) 
	&=\langle 1_A \ast \mu_{B'} , \mu_B \rangle_{L^2} \\
	&= \langle 1_A , \mu_B \ast \mu_{B'} \rangle_{L^2} \\
	&= \langle 1_A , \mu_B \rangle_{L^2} + 
	O\big(\, \| \mu_B - \mu_B \ast \mu_{B'} 
	\|_{L^1} \| 1_A \|_{\infty} \big) \\
	&= \alpha + O(\rho d) .
\end{align*}
Bounding the left-hand side in $\| \cdot \|_{\infty}$ norm concludes the proof.
\end{proof}

Our iterative argument initially follows
that developed by Sanders in \cite{sandersAPs},
with slight modifications to accommodate upper level sets.
We recall its principle here.
At each step, one fixes a small Bohr set $B'$
and finds a translate $A'_3$ of $A_3$ with 
relative density in $B'$ of same order as 
that of $A_3$ in $B$.
Then either $B'$ is contained in the upper level set 
$\{ { 1_{A_1} \ast 1_{A_2} \ast 1_{A'_3} } > K \}$, 
or it has nonempty intersection $\U$ with the lower level set 
${ \{ { 1_{A_1} \ast 1_{A_2} \ast 1_{A'_3} } \leq K \} }$.
The scalar product
$\langle 1_{A_1} \ast 1_{A_2} \ast 1_{A'_3} , 
1_{\U} \rangle_{L^2}$
is then unusually small for a good choice of $K$.
The usual density-increment strategy then
allows one to find a smaller Bohr set on which 
either $A_1$ or $A_2$ has increased density.
Since the density is bounded by $1$, we may iterate
this process only a finite number of times, 
after which we have found a translate of a Bohr set in
a certain upper level set.

At this point, however, we take advantage of 
two techniques from \cite{sandersroth2},
which we apply in a similar fashion.
The first is the Katz-Koester transform which 
in this situation roughly redistributes
the mass of the sets $A_1$ and $A'_3$ on 
two new sets $L$ and $S$ where $L$ is thick and $S$ is not
too small, without affecting the size of the convolution
$1_{A_1} \ast 1_{A'_3}$ excessively.
The second is the Croot-Sisask lemma
which allows one to smooth the convolution $1_L \ast 1_S$ by
a factor $\lambda_X^{(\ell)}$.
At last the density-increment strategy 
makes it possible to exploit the smallness 
of the new scalar product
${ \langle 1_L \ast 1_S \ast 1_{A_2} \ast \lambda_X^{(\ell)} , 
1_{\U} \rangle }$
to obtain a density increment on $A_2$.

Our main iterative lemma is then the following.
On a first reading the reader may wish to take
$\omega=0$ below for simplicity,
which suffices to obtain Theorem~\ref{thm:AP1}
without a counting lemma.

\begin{proposition}[Main iterative lemma]
	\label{thm:mainiteration1}
	Let $\rho,\omega \in (0,1)$ be parameters.
	Let $B$ be a regular Bohr set and 
	suppose that $B' = B_\rho$ is regular. 
	Suppose that $A_1,A_2,A_3 \subset B$
	have relative densities $\alpha_1,\alpha_2,\alpha_3$
	and write $\wt{\alpha}=\alpha_1\alpha_2\alpha_3$.
	Assume that $\rho \leq c\wt{\alpha}/d$ and
	$ \omega \leq e^{-C(d+\alpha_1^{-1}) \log (2d/\rho \wt{\alpha})}$.
	Then either
	\begin{enumerate}
		\item
		there exists a regular Bohr set $\breve{B} \leq B$ 
		such that, for some $i \in \{1,2\}$,
		\begin{align*}
			\| 1_{A_i} \ast \mu_{\breve{B}} \|_{\infty} 
			&\geq (1+c) \alpha_i, \\
			\breve{d} 
			&\leq d + C\alpha_1^{-1} (\log 2\wt{\alpha}^{-1})^4, \\
			\breve{\delta} 
			&\geq c \rho ( \wt{\alpha} / d)^C \delta,
		\end{align*}
		\item
		or there exists $x \in G$ such that
		$B' \subset \{ \, y \,:\, 1_{A_1} \ast 1_{A_2} \ast 1_{A_3} (x+y) 
		> \omega b^2 \, \}$.
	\end{enumerate}
\end{proposition}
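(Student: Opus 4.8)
The plan is to implement the density-increment scheme described just above the statement, with the Katz--Koester transform and the Croot--Sisask lemma grafted in as in \cite{sandersroth2}. Write $b=m_G(B)$. First I would localize $A_3$ inside $B'$: averaging over translates $t\in B_{1-\rho}$ and using regularity to discard boundary contributions (the constraint $\rho\le c\wt\alpha/d\le c\alpha_3/d$ keeps the boundary negligible) produces $t\in G$ such that $A_3':=(A_3+t)\cap B'$ has relative density $\alpha_3'\ge\tfrac12\alpha_3$ in $B'$. Since $1_{A_3'}\le\tau_{-t}1_{A_3}$, and therefore $g:=1_{A_1}\ast 1_{A_2}\ast 1_{A_3'}\le\tau_{-t}(1_{A_1}\ast 1_{A_2}\ast 1_{A_3})$, it will be enough to produce $z\in G$ with $z+B'\subset\{\,g>\omega b^2\,\}$: conclusion (ii) then holds with $x=z-t$.

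Next I would feed the pair $A_1\subset B$ and $A_3'\subset B'=B_\rho$ into the Katz--Koester transform (Lemma~\ref{thm:KK2sets}), with inner radius $\rho'$ small (at least $\rho'\le c\alpha_3'/d$, with a few further constraints fixed at the end) and with $B''=B'_{\rho'}$ made regular via Lemma~\ref{thm:bohrsetsexistence}. Its first alternative is precisely conclusion (i) with $i=1$: a regular $\breve B\le B'\le B$ with $\|1_{A_1}\ast\mu_{\breve B}\|_\infty\ge(1+c)\alpha_1$, $\breve d\le d+C\alpha_1^{-1}\log 2\alpha_3'^{-1}\le d+C\alpha_1^{-1}\log 2\wt\alpha^{-1}$, and $\breve\delta\ge c\rho(\alpha_1\alpha_3'/d)^C\delta\ge c\rho(\wt\alpha/d)^C\delta$. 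So I may assume the second alternative: I obtain $L\subset B$ with $m_B(L)\gg 1$ and $S\subset B''$ with $\sigma:=m_{B''}(S)\ge e^{-C\alpha_1^{-1}\log 2\alpha_3'^{-1}}\ge e^{-C\alpha_1^{-1}\log 2\wt\alpha^{-1}}$ such that $1_L\ast 1_S\le C\alpha_1^{-1}\,1_{A_1}\ast 1_{A_3'}$, hence $1_L\ast 1_S\ast 1_{A_2}\le C\alpha_1^{-1}g$. Because $L$ is thick and $B$ is regular, the pair $(L,B'')$ has doubling $O(1)$, so the $L^p$-smoothing lemma (Lemma~\ref{thm:CSsmoothing}) applied with $f=1_S$, with the thick set $L$ in the role of ``$S$'' and $B''$ in the role of ``$T$'', and with auxiliary parameters $p,\ell,\theta$ to be fixed, yields $X\subset B''$ with $\|1_L\ast 1_S-1_L\ast 1_S\ast\lambda_X^{(\ell)}\|_{L^p}\le\theta\,m_G(L)\,\|1_S\|_{L^p}$. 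Put $\psi:=1_L\ast 1_S\ast 1_{A_2}$ and $h:=\psi\ast\lambda_X^{(\ell)}$; then $\psi$ has mass $\|\psi\|_{L^1}=m_G(L)m_G(S)m_G(A_2)$, which by Lemma~\ref{thm:bohrsetgrowth} is $\gg e^{-C(d+\alpha_1^{-1})\log(2d/\rho\wt\alpha)}b$, and $\psi,h$ are supported on dilates of $B$ of measure $\le 7^{O(d)}b$ by Lemma~\ref{thm:bohrsetdoubling}.

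Now suppose conclusion (ii) fails, i.e.\ $\{\,f\le\omega b^2\,\}$ meets every translate of $B'$; then so does $\{\,g\le\omega b^2\,\}$, and hence (using $\psi\le C\alpha_1^{-1}g$) the set $\{\,\psi\le C\alpha_1^{-1}\omega b^2\,\}$. This is exactly the input required by the $L^2$ density-increment computation of \cite{sandersAPs,sandersroth2}: choosing $z_0$ with $\psi\ast\mu_{B'}(z_0)$ of the expected size $\gg\|\psi\|_{L^1}\,7^{-O(d)}$ (possible since $\psi$ is spread over measure $\le 7^{O(d)}b$), the smallness just exhibited makes $\psi$ — and, via the Croot--Sisask almost-periodicity absorbed in $X$, its smoothed proxy $h$ — deviate from its $B'$-average on $z_0+B'$ by an amount comparable to that average; expanding $1_{A_2}=\alpha_2 1_B+f_{A_2}$ and subtracting the now-predictable main term $\alpha_2\,1_L\ast 1_S\ast 1_B\ast\lambda_X^{(\ell)}$, the resulting scalar-product deficit gives, through Cauchy--Schwarz and Parseval, a lower bound
\begin{align*}
\sum_{\gamma\in\Spec_\eta(\mu_S)}|\wh{f_{A_2}}(\gamma)|^2\ \geq\ \nu\,\alpha_2^2\,m_G(B)
\end{align*}
for a constant $\nu$ and some $\eta\gg(\log 2\wt\alpha^{-1})^{-O(1)}$ (the kernel $\lambda_X^{(\ell)}$ being supported on $\ell(X-X)\subset B_{2\ell\rho\rho'}$, negligible against $B'$). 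Inserting this into the $L^2$ density-increment lemma (Lemma~\ref{thm:L2increment}) applied to $A_2$, with $\dot B:=B''$ (of dimension $d$ and radius a constant multiple of $\rho\rho'\delta$) and with $S$ in the role of ``$X$'' there, gives a regular $\breve B\le B''\le B$ with $\|1_{A_2}\ast\mu_{\breve B}\|_\infty\ge(1+c)\alpha_2$, $\breve d\le d+C\eta^{-2}\log 2\sigma^{-1}\le d+C\alpha_1^{-1}(\log 2\wt\alpha^{-1})^4$, and $\breve\delta\ge c\rho(\wt\alpha/d)^C\delta$, once the auxiliary parameters $\rho',\eta,\nu,p,\ell,\theta$ have been fixed appropriately. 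This is conclusion (i) with $i=2$, contradicting the assumption that (ii) fails; hence (ii) holds. The hypothesis $\omega\le e^{-C(d+\alpha_1^{-1})\log(2d/\rho\wt\alpha)}$ with $C$ large enters only here, guaranteeing that $C\alpha_1^{-1}\omega b^2$ is a genuinely small fraction of the typical size of $\psi$, so that the deficit is indeed of order $\alpha_2^2\,m_G(B)$.

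I expect the third paragraph to be the main obstacle: turning ``$1_{A_1}\ast 1_{A_2}\ast 1_{A_3}$ dips below $\omega b^2$ on every translate of $B'$'' into the clean spectral lower bound on $f_{A_2}$ consumed by Lemma~\ref{thm:L2increment}. There one must arrange, using the $\lambda_X^{(\ell)}$-smoothing, that the deficit is a genuine positive proportion of the total $\psi$-mass on $z_0+B'$ (rather than concentrated on a negligible set), while the Croot--Sisask set $X$ costs only an $O(1)$ factor in the $L$-thickness and a negligible dilate in radius; one must also verify that the smoothing error $\|h-\psi\|_{L^p}$ stays below the deficit, and — throughout paragraphs two and three — keep careful track of which dilate of which Bohr set each operation (the $A_3$-localization, the Katz--Koester ball $B''$, the support of $\lambda_X^{(\ell)}$, the spectrum-annihilation ball inside Lemma~\ref{thm:L2increment}) lives on, so that the cumulative radius loss is only the stated polynomial $c\rho(\wt\alpha/d)^C\delta$ and the dimension gain only $C\alpha_1^{-1}(\log 2\wt\alpha^{-1})^4$. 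None of these steps is conceptually deep, but this is where essentially all the care in the proof is concentrated.
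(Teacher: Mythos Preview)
Your overall plan --- localize $A_3$, Katz--Koester on $(A_1,A_3')$, Croot--Sisask smoothing, then the $L^2$ density increment --- is exactly the paper's. But two linked choices in paragraphs 2--3 are genuine gaps.

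First, you apply Lemma~\ref{thm:CSsmoothing} with $f=1_S$ and the thick set $L$ in the r\^ole of the controlled-doubling set. The paper does the opposite: it takes $f=1_L$, with $S$ as the controlled set (doubling $2/\sigma$ against a further dilate $B'''=B''_{\rho''}$). This is not cosmetic. The smoothing error is $\theta\|f\|_{L^p}$, and one needs it, after H\"older against $\|1_{-A_2}\ast\mu_{\mathcal U}\|_{L^q}\le(\alpha_2 b)^{1-1/p}$, to beat the target $\tfrac14\lambda\alpha_2 b$. With $f=1_L$ and $p=2+\log\alpha_2^{-1}$ this reads $\theta(\lambda b)^{1/p}(\alpha_2 b)^{1-1/p}\le\tfrac14\lambda\alpha_2 b$, which holds for $\theta\asymp 1$. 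With your $f=1_S$ the analogous inequality forces $\theta\lesssim(\sigma b''/b)^{1-1/p}$, i.e.\ exponentially small; then $p\ell^2/\theta^2$ explodes and the density of $X$, and hence the dimension bound, are destroyed. In short, in the Croot--Sisask lemma the function $f$ must be the \emph{thick} object, so that $\|f\|_{L^p}$ and $\|f\|_{L^1}$ are comparable.

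Second, you claim the deficit yields a bound over $\Spec_\eta(\mu_S)$. There is no mechanism for this: on the Fourier side $\wh{\mu_S}$ appears only to the first power and cannot be used to discard off-spectrum frequencies. The restriction comes from $|\wh{\mu_X}|^{4\ell}$, yielding $\Spec_{1/2}(\mu_X)$ after choosing $\ell\asymp\log 2\alpha_2^{-1}$. The paper then feeds $X\subset B'''$ (of relative density $\tau\ge e^{-C(p\ell^2/\theta^2)\log 2\sigma^{-1}}$) into Lemma~\ref{thm:L2increment}, whence $\breve d\le d+C\log 2\tau^{-1}\le d+C\alpha_1^{-1}(\log 2\wt\alpha^{-1})^4$. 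Your final dimension estimate happens to be numerically right, but the set producing it is $X$, not $S$.

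A smaller point: rather than choosing a single $z_0$, the paper works with the set
\[
\mathcal U=\{\,y\in B':1_{A_1}\ast 1_{A_2}\ast 1_{A_3}(x+y)\le\omega b^2\,\}
\]
for the \emph{specific} $x$ coming from the $A_3$-localization, and pairs everything against the probability measure $\mu_{\mathcal U}$. This is what makes $\|\wh{\mu_{\mathcal U}}\|_\infty\le 1$ available in the Parseval step and keeps the whole computation in paragraph~3 clean.
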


\begin{proof}
By Lemma~\ref{thm:scaling} we may find $x \in G$ such that 
$A'_3 = (A_3 - x) \cap B'$ has relative density in $B'$ equal to
$\alpha'_3 = 1_{A_3} \ast \mu_{B'} (x) \gg \alpha_3$.
Now define
\begin{align*}
	\mathcal{U} = \{ \, y \,:\, 1_{A_1} \ast 1_{A_2} \ast 1_{A_3} (x+y) 
	\leq \omega b^2 \, \} \cap B',
\end{align*}
we may assume that $\U$ is nonempty 
since else we are in the second case of the proposition.
Note that from the inclusion $A'_3 \subset A_3 - x$ and 
the definition of $\U$, we have
\begin{align}
	\notag
	\langle 1_{A_1} \ast 1_{A_2} \ast 1_{A'_3} , \mu_{\U} \rangle_{L^2}
	&\leq \langle 1_{A_1} \ast 1_{A_2} \ast 1_{A_3-x} , \mu_{\U} \rangle_{L^2} \\
	\notag
	&= \langle 1_{A_1} \ast 1_{A_2} \ast 1_{A_3} , \mu_{x+\U} \rangle_{L^2} \\
	\label{eq:smallscalar1}
	&\leq \omega b^2
\end{align}
where $\mu_{\U}$ is well-defined 
since $\U \neq \varnothing$.
From hereon, the proof divides into three steps.

\textit{Applying the Katz-Koester transform.}
Let $\rho'=c\kappa\alpha_3/d$ and 
$B'' = B'_{\rho'}$,
where $\kappa \in \big[\tfrac{1}{2},1\big)$ is chosen via 
Lemma~\ref{thm:bohrsetsexistence} so that
$B''$ is regular.
Applying Lemma~\ref{thm:KK2sets}
to $A = A_1$ and $A' = A'_3$ with parameters
$\rho$ and $\rho'$ then
results in one of two cases.
In case (i) of that lemma we obtain 
a regular Bohr set $\breve{B} \leq B'$
such that
$\| 1_{A_1} \ast \mu_{\breve{B}}\|_{\infty} \geq (1+c) \alpha_1$,
\begin{align*}
	\breve{d} \leq d + C\alpha_1^{-1} \log 2\alpha_3^{-1}
	\quad\text{and}\quad
	\breve{\delta} \geq c\rho(\alpha_1\alpha_3/d)^C \delta,
\end{align*}
which is enough to conclude.
In case (ii), we may find $L \subset B$
with relative density $\lambda$
and $S \subset B''$ with relative density $\sigma$
such that
\begin{align}
	\label{eq:lambdasigma1}
	&\lambda \gg 1
	\quad\text{and}\quad 
	\sigma \geq e^{-C\alpha_1^{-1} \log 2\alpha_3^{-1}}, \\
	\label{eq:KKbound1}
	&1_L \ast 1_S \ll 
	\alpha_1^{-1} \, 1_{A_1} \ast 1_{A'_3}.
\end{align}
By \eqref{eq:KKbound1} we then have
\begin{align*}
	\langle 1_L \ast \mu_S , 
	1_{-A_2} \ast \mu_{\mathcal{U}} \rangle_{L^2} 
	 &= (\sigma b'')^{-1} 
	\langle 1_L \ast 1_S , 
	1_{-A_2} \ast \mu_{\mathcal{U}} \rangle_{L^2} \\
	&\ll (\alpha_1 \sigma b'')^{-1}
	\langle 1_{A_1} \ast 1_{A'_3} , 
	1_{-A_2} \ast \mu_{\mathcal{U}} \rangle_{L^2} \\
	&= (\alpha_1 \sigma b'')^{-1}
	\langle 1_{A_1} \ast 1_{A_2} \ast 1_{A'_3} , 
	\mu_{\mathcal{U}} \rangle_{L^2}.
\end{align*}
By \eqref{eq:smallscalar1} we have further
\begin{align*}
	\langle 1_L \ast \mu_S , 
	1_{-A_2} \ast \mu_{\mathcal{U}} \rangle_{L^2} 
	&\ll ( \alpha_1 \sigma b'')^{-1} \omega b^2 \\
	&= ( \lambda \alpha_1 \alpha_2 \sigma )^{-1} (b/b'') \omega
	\cdot \lambda \alpha_2 b.
\end{align*}
Recalling \eqref{eq:lambdasigma1} and applying
Lemma~\ref{thm:bohrsetgrowth} we have therefore
\begin{align*}
	\langle 1_L \ast \mu_S , 
	1_{-A_2} \ast \mu_{\mathcal{U}} \rangle_{L^2}
	\leq e^{ C (d+\alpha_1^{-1}) \log (2d/\rho \wt{\alpha}) } 
	\omega \cdot \lambda \alpha_2 b.
\end{align*}
Assuming $\omega \leq e^{-C' (d+\alpha_1^{-1}) \log (2d/\rho \wt{\alpha})}$ 
with $C'$ large enough we eventually obtain
\begin{align}
	\label{eq:Ibound1}
	\langle 1_L \ast \mu_S , 1_{-A_2} \ast \mu_{\mathcal{U}} \rangle_{L^2}
	\leq \tfrac{1}{4} \lambda \alpha_2 b.
\end{align}

\textit{Applying the Croot-Sisask lemma.}
Let $\rho'' = c\kappa' / d$ and $B''' = B''_{\rho''}$, where
${\kappa' \in \big[\tfrac{1}{2},1\big)}$ is chosen 
via Lemma~\ref{thm:bohrsetsexistence}
so that $B'''$ is regular,
and with $c$ small enough so that, by regularity of $B''$ and 
Definition~\ref{thm:bohrsetregdef},
\begin{align*}
	|S + B'''| 
	\leq |B'' + B'''| 
	\leq |B''_{1+\rho''}|
	\leq 2 |B''|
	= (2/\sigma) |S|.
\end{align*}
Applying Lemma~\ref{thm:CSsmoothing} to 
$f = 1_L$ and $T = B'''$ with 
parameters $p,\ell,\theta$ to be determined later,
we obtain a set $X \subset B'''$ of relative 
density $\tau$ with
\begin{align}
	\label{eq:Xdensity1}
	\tau \geq \exp\big( \!-C(p\ell^2/\theta^2) \log 2\sigma^{-1} \big)
	\shortintertext{such that}
	\notag
	\| 1_L \ast \mu_S - 1_L \ast \mu_S \ast \lambda_X^{(\ell)} \|_{L^p}
	\leq \theta \| 1_L \|_{L^p}.
\end{align}
By Hölder's and Young's inequalities we have therefore
\begin{align*}
	| \langle 1_L \ast \mu_S , 
	1_{-A_2} \ast \mu_{\mathcal{U}} \rangle_{L^2} &- 
	\langle 1_L \ast \mu_S \ast \lambda_X^{(\ell)} , 
	1_{-A_2} \ast \mu_{\mathcal{U}} \rangle_{L^2} | 		\\
	&\leq \| 1_L \ast \mu_S - 1_L \ast \mu_S \ast \lambda_X^{(\ell)} \|_{L^p}
	\| 1_{-A_2} \ast \mu_{\mathcal{U}} \|_{L^q} 			\\
	&\leq \theta \| 1_L \|_{L^p} \| 1_{-A_2} \|_{L^q} 		\\
	&= \theta \lambda^{1/p} \alpha_2^{1-1/p} b
\end{align*}
Choosing $p = 2 + \log \alpha_2^{-1}$ and 
$\theta = \lambda^{1-1/p} / 4e \asymp 1$, this is less 
than $\tfrac{1}{4} \lambda \alpha_2 b$,
which combined with \eqref{eq:Ibound1} shows that
\begin{align}
	\label{eq:A_2scalarbound1}
	| \langle 1_L \ast \mu_S \ast \lambda_X^{(\ell)} , 
	1_{-A_2} \ast \mu_{\mathcal{U}} \rangle_{L^2} |
	\leq \tfrac{1}{2} \lambda\alpha_2 b.
\end{align}

\textit{Obtaining an $L^2$ density increment.}
Since $\U,S,X$ are contained in $B'$, the function
$\mu_\U \ast \mu_{-S} \ast \lambda_X^{(\ell)}$ has
support in ${ (2\ell+2) B' } \subset B_{(2\ell+2)\rho}$
and we have, by Lemma~\ref{thm:bohrsetsregaveraging},
\begin{align}
	\notag
	\langle 1_L \ast \mu_S \ast \lambda_X^{(\ell)} , 
	1_B \ast \mu_{\mathcal{U}} \rangle_{L^2}
	&= \langle 1_L , 
	1_B \ast \mu_{\mathcal{U}} \ast \mu_{-S} \ast \lambda_X^{(\ell)} 
	\rangle_{L^2} \\
	\notag
	&= \langle 1_L , 1_B \rangle_{L^2} +
	O\big(\, \| 1_B - 1_B \ast \mu_{\mathcal{U}} \ast \mu_{-S} 
	\ast \lambda_X^{(\ell)} \|_{L^1} \| 1_L \|_{\infty} \big) \\
	\notag
	&= \lambda b + O( \ell \rho d b ) \\
	\label{eq:Bscalarbound1}
	&\geq \tfrac{3}{4} \lambda b
\end{align}
provided that $\rho \leq c/(\ell d)$, 
which will turn out to be the case.
Forming the balanced function 
$f_{-A_2} = 1_{-A_2}-\alpha_2 1_B$, we deduce
from \eqref{eq:A_2scalarbound1} and \eqref{eq:Bscalarbound1} that
\begin{align*}
	| \langle 1_L \ast \mu_S \ast \lambda_X^{(\ell)} , 
	f_{-A_2} \ast \mu_{\mathcal{U}} \rangle_{L^2} |
	\geq \tfrac{1}{4} \lambda\alpha_2 b.
\end{align*}
By Parseval's formula and the inequality
$\| \wh{f} \mspace{0.5mu} \|_{\infty} \leq \| f \|_{L^1}$ we have therefore
\begin{align*}
	\tfrac{1}{4} \lambda \alpha_2 b 
	&\leq \big\lvert \langle \wh{1_L} \cdot \wh{\mu_S} 
	\cdot \wh{\mu_X}^{\ell} \cdot \wh{\mu_{-X}}^{\ell} ,
	\wh{f_{-A_2}} \cdot \wh{ \mu_{\mathcal{U}} }  \rangle_{\ell^2} \big\rvert	\\
	&\leq \| \wh{\mu_S} \|_{\infty} \| \wh{ \mu_{\mathcal{U}} } \|_{\infty}
	\| \wh{1_L} \cdot \wh{f_{A_2}} \cdot \wh{\mu_X}^{2\ell} \|_{\ell^1}			\\
	&\leq \| \wh{1_L} \cdot \wh{f_{A_2}} \cdot \wh{\mu_X}^{2\ell} \|_{\ell^1}.
\end{align*}
By Cauchy-Schwarz and Parseval's identity, we then have
\begin{align*}
	\tfrac{1}{4} \lambda \alpha_2 b 
	\leq \| \wh{1_L} \|_{\ell^2} \|\wh{f_{A_2}} \cdot \wh{\mu_X}^{2\ell} \|_{\ell^2}
	= (\lambda b)^{1/2}
	\|\wh{f_{A_2}} \cdot \wh{\mu_X}^{2\ell} \|_{\ell^2}.
\end{align*}
It follows that, for some constant $c$,
\begin{align}
	\label{eq:minorarc1}
	\sum_{\gamma} |\wh{f_{A_2}}(\gamma)|^2 |\wh{\mu_X}(\gamma)|^{4\ell} 
	\geq \tfrac{1}{16} \lambda \alpha_2^2 b \geq c \alpha_2^2 b.
\end{align}
By Parseval's identity and
choosing $\ell = C \log 2\alpha_2^{-1}$ 
with $C$ large enough we have
\begin{align*}
	\smash{ \sum_{ \gamma \,:\, |\wh{\mu_{X}}(\gamma)| \leq 1/2 } }
	|\wh{f_{A_2}}(\gamma)|^2 |\wh{\mu_X}(\gamma)|^{4\ell}
	&\leq 2^{- 4 \ell } \| f_{A_2} \|_{L^2}^2 \\
	&\leq 2^{2 - 4 \ell } \alpha_2 b \\
	&\leq \tfrac{1}{2} c \alpha_2^2 b.
\end{align*}
By \eqref{eq:minorarc1} and the bound $\| \wh{\mu_X} \|_{\infty} \leq 1$, 
we have therefore
\begin{align*}
	\sum_{\gamma \in \Spec_{1/2}(\mu_X)} |\wh{f_{A_2}}(\gamma)|^2 
	\gg \alpha_2^2 b.
\end{align*}
The parameters we have chosen have size 
$p \asymp \log 2\alpha_2^{-1}$,
$\ell \asymp \log 2\alpha_2^{-1}$, 
and $\theta \asymp 1$, and therefore
by \eqref{eq:Xdensity1} and \eqref{eq:lambdasigma1},
we have
\begin{align*}
	\tau \geq \exp\big( \!-C\alpha_1^{-1} (\log 2\wt{\alpha}^{-1})^4 \big).
\end{align*} 
Since $\rho' \asymp \alpha_3/d$ and $\rho'' \asymp 1/d$, we also have
$\delta''' = c\rho (\alpha_3/d^2) \delta$.
Applying Lemma~\ref{thm:L2increment} with $A=A_2$
and for $\eta=1/2$ and some $\nu \asymp 1$ ,
we therefore obtain 
a regular Bohr set $\breve{B} \leq B'''$ such that
${ \| 1_{A_2} \ast \mu_{\breve{B}} \|_{\infty} } \geq (1+c) \alpha_2 $
and
\begin{align*}
	\breve{d} \leq d + C\alpha_1^{-1} (\log 2\wt{\alpha}^{-1})^4
	\quad\text{and}\quad
	\breve{\delta} \geq c \rho (\wt{\alpha}/d)^4 \delta,
\end{align*}
which again is enough to conclude.
\end{proof}

We are now in a position to prove the following result,
which gives slightly more structure than Theorem~\ref{thm:AP1} 
in the form of a translate of a large Bohr set.
Theorem~\ref{thm:AP1} will then follow quickly from
this proposition and Lemma~\ref{thm:APinbohrset}.

\begin{proposition}
	\label{thm:B1}
	Suppose that $A_1,A_2,A_3$ are subsets of $\ZN$ 
	of respective densities $\alpha_1,\alpha_2,\alpha_3$
	and write $\wt{\alpha}=\alpha_1 \alpha_2 \alpha_3$.
	Then there exist $z \in G$ and a Bohr set $B$ with
	\begin{align*}
		d 
		&\leq C \alpha_1^{-1} (\log 2\wt{\alpha}^{-1})^5, \\
		\delta
		&\geq \exp\big( \!-C(\log 2\wt{\alpha}^{-1})^2 \big),
	\end{align*}
	such that, for every $y \in z+B$,
	\begin{align*}
		1_{A_1} \ast 1_{A_2} \ast 1_{A_3} (y) > 
		\exp\big( \!-C\alpha_1^{-1}(\log 2\wt{\alpha}^{-1})^7 \big).
	\end{align*}
\end{proposition}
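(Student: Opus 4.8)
The plan is to iterate Proposition~\ref{thm:mainiteration1}, running a density increment on the pair $A_1,A_2$ while forcing all three sets to remain inside a shrinking regular Bohr set, until the increment can no longer be performed. We begin with the regular Bohr set $B^{(0)}=G$ and $A_1,A_2,A_3\subseteq B^{(0)}$ of relative densities $\alpha_i^{(0)}=\alpha_i$, and we fix a step budget $K$ equal to a sufficiently large multiple of $\log 2\wt{\alpha}^{-1}$. At stage $j$ we have a regular Bohr set $B^{(j)}$ and translates of $A_1,A_2,A_3$ lying inside it of relative densities $\alpha_i^{(j)}$, and we maintain the invariants $\alpha_i^{(j)}\gg\alpha_i$ for $i=1,2,3$ (hence $\wt{\alpha}^{(j)}=\alpha_1^{(j)}\alpha_2^{(j)}\alpha_3^{(j)}\gg\wt{\alpha}$), $d^{(j)}\leq C\alpha_1^{-1}(\log 2\wt{\alpha}^{-1})^5$, and $\delta^{(j)}\geq\exp(-C(\log 2\wt{\alpha}^{-1})^2)$. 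At each stage we take $\rho_j\asymp\wt{\alpha}^{(j)}/(d^{(j)}K)$ and $\omega_j=\exp(-C(d^{(j)}+(\alpha_1^{(j)})^{-1})\log(2d^{(j)}/\rho_j\wt{\alpha}^{(j)}))$, shrink $\rho_j$ by a factor in $[\tfrac12,1)$ so that $B^{(j)}_{\rho_j}$ is regular, and apply Proposition~\ref{thm:mainiteration1} with $B=B^{(j)}$, $B'=B^{(j)}_{\rho_j}$ and $\omega=\omega_j$; its hypotheses hold by construction.

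If at some stage $j$ we are in case (ii), we are done. Undoing the total translation accumulated by each of $A_1,A_2,A_3$ over stages $0,\dots,j-1$ only increases $1_{A_1}\ast 1_{A_2}\ast 1_{A_3}$ pointwise, since every restricted set is dominated by a translate of the corresponding original set; so the inclusion $B^{(j)}_{\rho_j}\subseteq\{\,y:1_{A_1^{(j)}}\ast 1_{A_2^{(j)}}\ast 1_{A_3^{(j)}}(x+y)>\omega_j(b^{(j)})^2\,\}$ becomes the statement of the proposition for a suitable $z$ and $B=B^{(j)}_{\rho_j}$. Here $B$ has the same frequency set as $B^{(j)}$, so $d=d^{(j)}$ is within the claimed range, and its radius $\asymp\rho_j\delta^{(j)}$ is $\geq\exp(-C(\log 2\wt{\alpha}^{-1})^2)$ since $\rho_j^{-1},d^{(j)},K$ and $(\wt{\alpha}^{(j)})^{-1}$ are all at most $\exp(C\log 2\wt{\alpha}^{-1})$ (using $\alpha_1\geq\wt{\alpha}$); and by Lemma~\ref{thm:bohrsetsize} and the same estimates, $b^{(j)}=m_G(B^{(j)})\geq\exp(-C\alpha_1^{-1}(\log 2\wt{\alpha}^{-1})^7)$ while $\omega_j\geq\exp(-C\alpha_1^{-1}(\log 2\wt{\alpha}^{-1})^6)$, so $\omega_j(b^{(j)})^2\geq\exp(-C\alpha_1^{-1}(\log 2\wt{\alpha}^{-1})^7)$, as required.

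If instead we are in case (i), we get a regular $\breve{B}\leq B^{(j)}_{\rho_j}$ and $i\in\{1,2\}$ with $\|1_{A_i^{(j)}}\ast\mu_{\breve{B}}\|_{\infty}\geq(1+c)\alpha_i^{(j)}$, $\breve{d}\leq d^{(j)}+C\alpha_1^{-1}(\log 2\wt{\alpha}^{-1})^4$ and $\breve{\delta}\geq c\rho_j(\wt{\alpha}^{(j)}/d^{(j)})^C\delta^{(j)}$. We set $B^{(j+1)}=\breve{B}$, translate $A_i^{(j)}$ into $B^{(j+1)}$ so that its relative density there is $\geq(1+c)\alpha_i^{(j)}$, and translate the other two sets into $B^{(j+1)}$ as well; since the construction places $\breve{B}$ inside $B^{(j)}_{\rho_j}$ and $\rho_j\leq c/d^{(j)}$, the scaling Lemma~\ref{thm:scaling} shows each of the latter keeps relative density $\geq(1-O(\rho_j d^{(j)}/\alpha_k^{(j)}))\alpha_k^{(j)}=(1-O(1/K))\alpha_k^{(j)}$. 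This is exactly what the extra factor $K$ in $\rho_j$ buys: a set is passive in at most $K$ stages, so its density is multiplied by $(1-O(1/K))^K\gg 1$ overall, which — together with the fact that $\alpha_1,\alpha_2$ only lose while passive — maintains $\alpha_i^{(j)}\gg\alpha_i$; the dimension and radius invariants persist by summing the at most $K\ll\log 2\wt{\alpha}^{-1}$ increments (respectively contraction factors), each of log-size $O(\log 2\wt{\alpha}^{-1})$. Finally each case-(i) stage multiplies $\alpha_1^{(j)}\alpha_2^{(j)}$ by $(1+c)(1-O(1/K))\geq 1+c/2$ (valid once $K$ exceeds an absolute constant, which our choice of $K$ ensures), so, since $\alpha_1^{(j)}\alpha_2^{(j)}\leq 1$ and $\alpha_1^{(0)}\alpha_2^{(0)}\geq\wt{\alpha}$, there are at most $O(\log 2\wt{\alpha}^{-1})$ such stages; taking $K$ a large enough multiple of $\log 2\wt{\alpha}^{-1}$ guarantees that we reach case (ii) within the budget.

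The one genuinely delicate point is the bookkeeping of this last paragraph: one must prevent all three densities $\alpha_1^{(j)},\alpha_2^{(j)},\alpha_3^{(j)}$ from degrading as the passive sets are repeatedly re-centred into smaller Bohr sets, while simultaneously keeping $\alpha_1^{(j)}\alpha_2^{(j)}$ growing geometrically so that the iteration terminates; the two demands are reconciled by thinning $\rho_j$ by the extra factor $K$, at the affordable price of one further power of $\log 2\wt{\alpha}^{-1}$ in the radius. Everything else — the regularising dilations, the passage back through the accumulated translations, and the crude size bounds for $b^{(j)}$ and $\omega_j$ — is routine.
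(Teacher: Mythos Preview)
Your argument is correct and follows essentially the same route as the paper: iterate Proposition~\ref{thm:mainiteration1}, re-centre all three sets into the new Bohr set at each step, use the geometric growth of $\alpha_1^{(j)}\alpha_2^{(j)}$ to bound the number of iterations by $O(\log 2\wt{\alpha}^{-1})$, and then read off the dimension, radius and level-set bounds when case~(ii) is reached.

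The only notable difference is the device used to keep the passive densities from degrading. You thin $\rho_j$ by a fixed factor $1/K$ with $K\asymp\log 2\wt{\alpha}^{-1}$, so that the cumulative loss is $(1-O(1/K))^K\gg 1$; the paper instead takes $\rho_i\asymp\wt{\alpha}^{(i)}/(i^2 d_i)$, exploiting the convergence of $\sum i^{-2}$ so that the product $\prod_i(1-O(i^{-2}))$ is bounded below regardless of how many steps are taken. Your version requires fixing the step budget in advance (which is fine here since the bound on the number of steps follows from the same data), whereas the paper's version is self-terminating without that bookkeeping; conversely, your uniform $1/K$ makes the ``$(1+c)(1-O(1/K))\geq 1+c/2$'' step slightly cleaner than the paper's ``$(1+c)(1-c/2)\geq 1+c/4$''. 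A second cosmetic difference is that you let $\omega_j$ vary with $j$ while the paper fixes a single $\omega$ valid at every stage; both choices lead to the same final bound $\omega_j b_j^2\geq\exp(-C\alpha_1^{-1}(\log 2\wt{\alpha}^{-1})^7)$.
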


\begin{proof}
The proof proceeds by iteration of
Proposition~\ref{thm:mainiteration1}.
We construct iteratively a sequence 
of regular Bohr sets $B^{(i)}$ 
and sequences of sets 
$A_1^{(i)} \!,\, A_2^{(i)} \!,\, A_3^{(i)} \subset B^{(i)}$
of relative densities 
$\alpha_1^{(i)} \!,\, \alpha_2^{(i)} \!,\, \alpha_3^{(i)}$.
We initiate the iteration with $B^{(1)} = B(\{0\},2) = \ZN$, 
which is regular,
and with $(A_1^{(1)} \!,\, A_2^{(1)} \!,\, A_3^{(1)})=(A_1,A_2,A_3)$.
We denote by $\delta_i$, $d_i$, and $b_i$, 
respectively the radius, dimension, and
density in $G$ of $B^{(i)}$,
and we write 	
$\wt{\alpha}^{(i)}
=\alpha_1^{(i)}\alpha_2^{(i)}\alpha_3^{(i)}$.

At each step $i$, we apply 
Proposition~\ref{thm:mainiteration1} 
to the sets $A_1^{(i)},A_2^{(i)},A_3^{(i)}$
with parameters $\omega_i$ and
$\rho_i$ to be determined later.
In case (i) of that proposition 
we define $B^{(i+1)} = \breve{B}^{(i)}$, 
while in case (ii) we stop the iteration.
Whenever $B^{(i+1)}$ is defined we pick
$(x_{j,i})_{ 1 \leq j \leq 3 }$ so that,
for every $j$, $A_j^{(i+1)} \coloneqq ( A_j^{(i)} - x_{j,i} ) \cap B^{(i+1)}$
has relative density in $B^{(i+1)}$ equal to
\begin{align*}
	\alpha_j^{(i+1)}
	= 1_{ \smash{A_j^{(i)}} } \ast \mu_{ \smash{B^{(i+1)}} }(x_{j,i})
	= \| 1_{ \smash{A_j^{(i)}} } \ast \mu_{ \smash{B^{(i+1)}} } \|_{\infty}.
\end{align*}

We now assume that $B^{(i)}$ is defined
for $1 \leq i \leq n$.
Let $i < n$, our application of Proposition~\ref{thm:mainiteration1} 
then shows that there exists $j_i \in\{1,2\}$
such that $\alpha_{j_i} ^{(i+1)} \geq (1+c) \alpha_{j_i}^{(i)}$.
Choose now $\rho_i = c'\kappa_i \wt{\alpha}^{(i)}/(2 i^2 d_i)$,
where $\kappa_i \in \big[\tfrac{1}{2},1\big)$ is picked via 
Lemma~\ref{thm:bohrsetsexistence} so that $B^{(i)}_{\rho_i}$
is regular, and with $c'$ small enough so that, by
Lemma~\ref{thm:scaling},
\begin{align}
	\label{eq:densitylowerbound1}
	\alpha_j^{(i+1)}
	\geq \big( 1 - O(\rho_i d_i/\alpha_j^{(i)}) \big) \alpha_j^{(i)}
	\geq ( 1 - \tfrac{c}{2i^2} ) \alpha_j^{(i)}
\end{align}
for every $1\leq j \leq 3$. This implies that
\begin{align*}
	\alpha_1^{(i+1)}\alpha_2^{(i+1)} 
	\geq (1-c/2)(1+c) \alpha_1^{(i)}\alpha_2^{(i)}
	\geq (1+c/4) \alpha_1^{(i)}\alpha_2^{(i)},
\end{align*} 
and as a consequence the 
iteration proceeds for at most 
$n = O( \log 2\wt{\alpha}^{-1} )$ steps.
Iterating \eqref{eq:densitylowerbound1} we also obtain
\begin{align*}
	\alpha_j^{(i)} 
	\geq e^{-O(\sum_{i=1}^\infty i^{-2})} \alpha_j 
	\gg \alpha_j
\end{align*}
uniformly in $1 \leq j \leq 3$ and $1 \leq i \leq n$.
The dimension bound from Proposition~\ref{thm:mainiteration1} 
then becomes
\begin{align*}
	d_{i+1}
	\leq d_i + (C/\alpha_1^{(i)}) 
	\log^4 (2/\wt{\alpha}^{(i)})  
	\leq d_i + O\big( \alpha_1^{-1} \log^4 (2/\wt{\alpha}) \big)
\end{align*}
for $i < n$ and therefore $d_i \ll i \alpha_1^{-1} (\log 2\wt{\alpha}^{-1})^4
\ll \alpha_1^{-1} (\log 2\wt{\alpha}^{-1})^5 $ 
uniformly in $1 \leq i \leq n$.
The radius bound from Proposition~\ref{thm:mainiteration1} 
is then
\begin{align*}
	\delta_{i+1} 
	\geq ( \wt{\alpha}^{(i)} / 2 i d_i )^{O(1)} \delta_i
	\geq (\wt{\alpha}/2)^{O(1)} \delta_i
\end{align*}
for $i < n$, whence $\delta_i \geq (\wt{\alpha}/2)^{O(i)} 
\geq e^{ - O ( (\log 2\wt{\alpha}^{-1})^2 ) }$
uniformly in $1 \leq i \leq n$.

Finally, we choose $\omega_i=\omega$ 
independent of $i$ so as to satisfy 
the condition
\begin{align*}
	\omega \leq \exp\big( \!-C(d_i+(\alpha_1^{(i)})^{-1} ) 
	\log (2d_i/\rho_i\wt{\alpha}^{(i)})  \big)
\end{align*}
from Proposition~\ref{thm:mainiteration1} 
for every $1\leq i \leq n$.
From the previous dimension and radius bounds 
we see that it is enough to take 
$\omega = e^{ - C'\alpha_1^{-1} (\log 2\wt{\alpha}^{-1})^6 }$,
with $C'$ large enough.
For that choice we deduce from Lemma~\ref{thm:bohrsetsize}
and the bounds on $d_i$ and $\delta_i$ that
$\omega {b_i}^2 
\geq e^{ -O( \alpha_1^{-1} (\log 2\wt{\alpha}^{-1})^7 ) }$
uniformly in $1\leq i\leq n$.
When we are in case (ii) of Proposition~\ref{thm:mainiteration1}
we therefore find that $B_{ \rho_n }^{(n)}$ is contained in
a translate of
\begin{align*}
	\{ \, y \,:\, 1_{ \smash{A_1^{(n)}} } \ast 
	1_{ \smash{A_2^{(n)}} } \ast 1_{ \smash{A_3^{(n)}} } (y) 
	\geq \exp\big( \!- C \alpha_1^{-1} (\log 2\wt{\alpha}^{-1})^7  \big) \, \}.
\end{align*}
Since $\rho_n \geq (\wt{\alpha}/2)^{O(1)}$ and
the $A_j^{(n)}\!$ are, by construction, contained in 
translates of the $A_j$, this concludes the proof.
\end{proof}

\begin{proof}[Proof of Theorem~\ref{thm:AP1}]
Applying Proposition~\ref{thm:B1}
with $(A_1,A_2,A_3)=(A,B,C)$
and using Lemma~\ref{thm:APinbohrset} we may find an
arithmetic progression $P$ such that
\begin{align*}
	|P| \geq \exp\bigg( 
	\frac{c\alpha(\log N)}{ (\log (2/\alpha\beta\gamma))^5 } 
	- C (\log (2/\alpha\beta\gamma) )^2 \bigg)
\end{align*}
and an element $z \in G$ such that 
$ 1_{A_1} \ast 1_{A_2} \ast 1_{A_3} (y) 
\geq e^{-C\alpha^{-1} \log^7 (2/\alpha\beta\gamma) } $
for all $y \in z+P$.
Restricting to 
$\alpha (\log \tfrac{2}{\alpha\beta\gamma})^{-7} 
\geq C' (\log N)^{-1}$ with $C'$ large enough we see 
that $z+P$ is the desired arithmetic progression.
\end{proof}

We now turn to the slightly more difficult proof of
Theorem~\ref{thm:AP2}.
The main strategy is the same and we again 
start with a small scalar product
${ \langle 1_{A_1} \ast 1_{A'_3} \ast 1_{-\U} , 
1_{-A_2} \rangle }$
where $\U$ is a certain lower level set.
However, we now fully exploit the set $\U$
in applying the generalized Katz-Koester transform 
from \cite{bloom} to the 
three sets $A_1,A'_3,-\U$.
This redistributes the mass more efficiently and
accounts for the improved dependency on densities.
The rest of the proof runs similarly with
applications of the Croot-Sisask lemma and the
density-increment strategy.

This, however, requires us to assume that 
${ \U = \{ 1_{A_1} \ast 1_{A_2} \ast 1_{A'_3} \leq K \} }$
is dense enough inside a Bohr set $B'$.
We are then in a situation already 
encountered in \cite{sandersAPs} where
at each step of the iteration it either happens that 
$\U$ has low density and that the upper level set
$\U^c = { \{ 1_{A_1} \ast 1_{A_2} \ast 1_{A'_3} > K \} }$
is thick inside $B'$; 
or that a density increment can be obtained.
The following lemma makes this precise and
the reader may again let $\omega=0$ there
to obtain Theorem~\ref{thm:AP2} without a 
counting lemma.

\begin{proposition}[Main iterative lemma]
	\label{thm:mainiteration2}
	Let $\rho,v,\omega \in (0,1)$ be parameters.
	Let $B$ be a regular Bohr set and 
	assume that $B' = B_\rho$ is regular. 
	Suppose that ${A_1,A_2,A_3 \subset B}$ 
	have relative densities
	$\alpha_1,\alpha_2,\alpha_3$ and write 
	$\wt{\alpha}=\alpha_1\alpha_2\alpha_3$.
	Assume that
	$\rho \leq c\wt{\alpha}/d$
	and
	$w \leq e^{-C(d+\alpha_1^{-1/2}) \log (2d/\rho v \wt{\alpha})}$.
	Then either
	\begin{enumerate}
		\item
		there exists a regular Bohr set
		$\breve{B} \leq B$ and $i \in \{1,2\}$
		such that 
		\begin{align*}
			\| 1_{A_i} \ast \mu_{\breve{B}}\|_{\infty} 
			&\geq (1+c) \alpha_i, \\
			\breve{d} 
			&\leq d + C\alpha_1^{-1/2} (\log 2v^{-1}) (\log 2\wt{\alpha}^{-1})^4, \\
			\breve{\delta} 
			&\geq c \rho ( v \wt{\alpha} / d)^C \delta,
		\end{align*}
		\item
		or there exists $x \in G$ such that
		$\{ y \,:\, 1_{A_1} \ast 1_{A_2} \ast 1_{A_3} (x+y) > \omega b^2 \} \cap B'$
		has relative density at least $1 - v$ in $B'$.
	\end{enumerate}
\end{proposition}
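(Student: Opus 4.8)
The plan is to run the iteration behind Proposition~\ref{thm:mainiteration1} almost verbatim, replacing the two-set Katz--Koester transform (Lemma~\ref{thm:KK2sets}) by its three-set refinement (Lemma~\ref{thm:KK3sets}); feeding the level set in as a genuine third summand is exactly what upgrades the exponent $\alpha_1^{-1}$ of that lemma to $\alpha_1^{-1/2}$ here, at the cost of the extra $\log 2v^{-1}$ factor tracking the density of the level set. First, by Lemma~\ref{thm:scaling} applied to $A_3\subset B$ and $B'=B_\rho$, I would pick $x\in G$ so that $A'_3:=(A_3-x)\cap B'$ has relative density $\alpha'_3\gg\alpha_3$ in $B'$, and set
\[
	\U:=\{\,y:1_{A_1}\ast 1_{A_2}\ast 1_{A_3}(x+y)\leq\omega b^2\,\}\cap B'.
\]
If $\U$ has relative density less than $v$ in $B'$ then $B'\setminus\U$ is, up to the translation by $-x$, the upper level set intersected with $B'$ and has relative density at least $1-v$, placing us in case~(ii); so I would assume $m_{B'}(\U)\geq v$ from now on. Just as in the derivation of~\eqref{eq:smallscalar1}, the inclusion $A'_3\subset A_3-x$ gives $\langle 1_{A_1}\ast 1_{A_2}\ast 1_{A'_3},\mu_\U\rangle_{L^2}\leq\omega b^2$; unwinding the convolution, $\langle 1_{A_1}\ast 1_{A'_3}\ast 1_{-\U},1_{-A_2}\rangle_{L^2}=\langle 1_{A_1}\ast 1_{A_2}\ast 1_{A'_3},1_\U\rangle_{L^2}$ (both count triples $(a_1,a_2,a'_3)\in A_1\times A_2\times A'_3$ with $a_1+a_2+a'_3\in\U$), so I would record this as $\langle 1_{A_1}\ast 1_{A'_3}\ast 1_{-\U},1_{-A_2}\rangle_{L^2}\leq m_G(\U)\,\omega b^2$.

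Next comes the Katz--Koester step. Since Bohr sets are symmetric, $-\U\subset B'$, and I would apply Lemma~\ref{thm:KK3sets} with $A=A_1$, $A'_1=A'_3$, $A'_2=-\U$, with $\rho$ as given and $\rho'=c\kappa v\wt{\alpha}/d$ for a suitable $\kappa\in[\tfrac12,1)$ making $B'':=B'_{\rho'}$ regular; then $\gamma=\alpha_1\alpha'_3\,m_{B'}(\U)\gg v\wt{\alpha}$ and the hypotheses hold. Case~(i) of that lemma yields a density increment on $A_1$ directly (using $\log 2\gamma^{-1}\ll\log(2/v\wt{\alpha})$ and $\gamma\gg v\wt{\alpha}$), i.e.\ case~(i) of the proposition with $i=1$. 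In case~(ii) I would obtain $L\subset B$ with $\lambda\gg 1$ and $S_1,S_2\subset B''$ with $\sigma_i\geq e^{-C\alpha_1^{-1/2}\log 2\gamma^{-1}}$ and $1_L\ast 1_{S_1}\ast 1_{S_2}\ll\alpha_1^{-2}\,1_{A_1}\ast 1_{A'_3}\ast 1_{-\U}$; pairing with $1_{-A_2}$, dividing by the normalisations of the $1_{S_i}$, and using $m_G(\U)\leq b'$ together with the last bound of the previous paragraph gives
\[
	\langle 1_L\ast\mu_{S_1}\ast\mu_{S_2},1_{-A_2}\rangle_{L^2}
	\ll\frac{1}{\lambda\alpha_1^2\alpha_2\sigma_1\sigma_2}\cdot\frac{b'b}{(b'')^2}\cdot\omega\cdot\lambda\alpha_2 b .
\]
Bounding $b'b/(b'')^2\leq e^{Cd\log(2d/\rho v\wt{\alpha})}$ via Lemma~\ref{thm:bohrsetgrowth}, together with the bounds on the $\sigma_i$ and the trivial $1/\lambda\alpha_1^2\alpha_2\leq e^{C\log 2\wt{\alpha}^{-1}}$, shows the whole prefactor is at most $e^{C(d+\alpha_1^{-1/2})\log(2d/\rho v\wt{\alpha})}$, so the hypothesis on $\omega$ forces the left-hand side to be at most $\tfrac14\lambda\alpha_2 b$.

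From here the argument should track Proposition~\ref{thm:mainiteration1} closely. I would smooth $1_L\ast\mu_{S_1}\ast\mu_{S_2}$ by a factor $\lambda_X^{(\ell)}$ using Lemma~\ref{thm:CSsmoothing} applied to $f=1_L\ast\mu_{S_1}$, $T=B'''=B''_{\rho''}$ with $\rho''\asymp 1/d$ chosen so $B'''$ is regular and $|S_2+B'''|\leq(2/\sigma_2)|S_2|$, and with $p=2+\log\alpha_2^{-1}$, $\ell=C\log 2\alpha_2^{-1}$, $\theta=\lambda^{1-1/p}/4e\asymp 1$, obtaining $X\subset B'''$ of relative density $\tau\geq\exp(-C(p\ell^2/\theta^2)\log 2\sigma_2^{-1})$. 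H\"older's inequality bounds the smoothing error against $1_{-A_2}$ by $\theta\lambda^{1/p}\alpha_2^{1-1/p}b\leq\tfrac14\lambda\alpha_2 b$, while regularity of $B$ (Lemma~\ref{thm:bohrsetsregaveraging}) gives $\langle 1_L\ast\mu_{S_1}\ast\mu_{S_2}\ast\lambda_X^{(\ell)},1_B\rangle_{L^2}\geq\tfrac34\lambda b$; combining, the balanced function $f_{-A_2}=1_{-A_2}-\alpha_2 1_B$ satisfies $|\langle 1_L\ast\mu_{S_1}\ast\mu_{S_2}\ast\lambda_X^{(\ell)},f_{-A_2}\rangle_{L^2}|\geq\tfrac14\lambda\alpha_2 b$. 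The same Fourier manipulation as in Proposition~\ref{thm:mainiteration1} (Parseval, $\|\wh{\mu_{S_i}}\|_\infty\leq 1$, Cauchy--Schwarz against $\|\wh{1_L}\|_{\ell^2}=(\lambda b)^{1/2}$, and discarding the part of the spectrum where $|\wh{\mu_X}|\leq\tfrac12$ using the choice of $\ell$) then gives $\sum_{\gamma\in\Spec_{1/2}(\mu_X)}|\wh{f_{A_2}}(\gamma)|^2\gg\alpha_2^2 b$, and I would finish by feeding this into Lemma~\ref{thm:L2increment} with $A=A_2$, $\dot B=B'''$, $\eta=\tfrac12$, $\nu\asymp 1$. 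Since $\log 2\sigma_2^{-1}\ll\alpha_1^{-1/2}\log(2/v\wt{\alpha})$ and $p,\ell\asymp\log 2\alpha_2^{-1}$ one checks $\log 2\tau^{-1}\ll\alpha_1^{-1/2}(\log 2v^{-1})(\log 2\wt{\alpha}^{-1})^4$, while $B'''=B_{\rho\rho'\rho''}$ has $d'''=d$ and $\delta'''\asymp\rho v\wt{\alpha}\delta/d^2$, so the output of Lemma~\ref{thm:L2increment} is a regular $\breve B\leq B'''\leq B$ with $\|1_{A_2}\ast\mu_{\breve B}\|_\infty\geq(1+c)\alpha_2$ and the claimed bounds $\breve d\leq d+C\alpha_1^{-1/2}(\log 2v^{-1})(\log 2\wt{\alpha}^{-1})^4$, $\breve\delta\geq c\rho(v\wt{\alpha}/d)^C\delta$, i.e.\ case~(i) of the proposition with $i=2$.

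The step I expect to be most delicate is purely bookkeeping, concentrated in the Katz--Koester step: one has to check that the loss from $b'b/(b'')^2$ (with the nested dilates $B'=B_\rho$, $B''=B_{\rho\rho'}$), from $1/\sigma_1\sigma_2$, and from $1/\lambda\alpha_1^2\alpha_2$ really assembles into the precise exponent $e^{C(d+\alpha_1^{-1/2})\log(2d/\rho v\wt{\alpha})}$ that the hypothesis on $\omega$ is tailored to swallow, and, in parallel, that $\log 2\tau^{-1}$ reduces to exactly $\alpha_1^{-1/2}(\log 2v^{-1})(\log 2\wt{\alpha}^{-1})^4$ after substituting the $\sigma_i$, the bound $\gamma\gg v\wt{\alpha}$, and $p,\ell\asymp\log 2\alpha_2^{-1}$. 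Beyond that there is no new idea compared with Proposition~\ref{thm:mainiteration1}; the whole improvement is bought by the three-set transform, which lets one retain the mass of $\U$ as a third summand instead of throwing away its size.
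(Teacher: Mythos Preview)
Your proposal is correct and follows essentially the same approach as the paper's proof: both translate $A_3$ into $B'$, assume the lower level set $\U$ has density $\geq v$, apply the three-set Katz--Koester transform (Lemma~\ref{thm:KK3sets}) to $A_1$, $A'_3$, $-\U$, then run Croot--Sisask smoothing and the $L^2$ density-increment exactly as in Proposition~\ref{thm:mainiteration1}. The only cosmetic differences are that the paper orders the two small sets as $(A'_1,A'_2)=(-\U,A'_3)$ and applies Lemma~\ref{thm:CSsmoothing} with $f=1_L$, $S=S_1$ (pairing afterwards against $1_{-A_2}\ast\mu_{-S_2}$), whereas you take $f=1_L\ast\mu_{S_1}$, $S=S_2$; since the $\sigma_i$ share the same lower bound and $\|1_L\ast\mu_{S_1}\|_{L^p}\leq\|1_L\|_{L^p}$ by Young, these choices are interchangeable.
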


\begin{proof}
The proof is in many aspects similar to that of 
Proposition~\ref{thm:mainiteration1} and therefore we
are more brief in computations.
By Lemma~\ref{thm:scaling} we may find $x \in G$
such that $A'_3 = (A_3 - x) \cap B'$ has relative density
$\alpha'_3 = 1_{A_3} \ast \mu_{B'} (x) \gg \alpha_3$ in $B'$.
Let 
\begin{align*}
	\U = \{ y \,:\, 1_{A_1} \ast 1_{A_2} \ast 1_{A_3} (x+y) \leq \omega b^2 \} \cap B'
\end{align*}
have density $u$ in $B'$;
we may assume that $u \geq v$ since else 
we are in the second case of the proposition.
Note that, by the definitions of $A'_3$ and $\U$, we have
\begin{align}
	\langle 1_{A_1} \ast 1_{A_2} \ast 1_{A'_3} , 1_{\U} \rangle_{L^2}
	\label{eq:smallscalar2}
	\leq \omega b^2 \cdot u b' \leq \omega b^2 b'.
\end{align}
From here the proof again divides into three steps.

\textit{Applying the Katz-Koester transform.}
Choose $\rho'= c v\wt{\alpha}/d$
with the help of Lemma~\ref{thm:bohrsetsexistence}
so that $B'' \coloneqq B'_{\rho'}$ is regular.
Applying Lemma~\ref{thm:KK3sets} with
$(A,A'_1,A'_2) = (A_1,-\U,A'_3)$
then results into one of two cases.
In case (i) of that lemma we obtain a regular Bohr set
$\breve{B} \leq B'$ such that
$\| 1_{A_1} \ast \mu_{\breve{B}}\|_{\infty} \geq (1+c) \alpha_1$,
\begin{align*}
	\breve{d} \leq d + C\alpha_1^{-1/2} \log (2/v\wt{\alpha})
	\quad\text{and}\quad
	\breve{\delta} \geq c\rho(v\wt{\alpha}/d)^C \delta,
\end{align*}
which is enough to conclude via the crude bound
$\log (2/v\wt{\alpha}) 
\ll (\log 2v^{-1}) (\log 2\wt{\alpha}^{-1})$.
In case (ii), we may find $L \subset B$
of relative density $\lambda$
and $S_1,S_2 \subset B''$ of relative densities 
$\sigma_1,\sigma_2$ such that
\begin{align}
	\label{eq:lambdasigma2}
	&\lambda \gg 1, \quad
	\sigma_1,\sigma_2 \geq 
	e^{ - C\alpha_1^{-1/2} \log (2/v\wt{\alpha}) }, \\
	\label{eq:KKbound2}
	&1_L \ast 1_{S_1} \ast 1_{S_2} 
	\ll \alpha_1^{-2} \, 1_{A_1} \ast 1_{-\U} \ast 1_{A'_3}.
\end{align}
In that case write
$I = \langle 1_L \ast \mu_{S_1} \ast \mu_{S_2} , 1_{-A_2} \rangle_{L^2} $
for convenience.
By \eqref{eq:KKbound2} we then have
\begin{align*}
	I 
	&\ll (\alpha_1^2 \sigma_1 \sigma_2)^{-1}  (b'')^{-2}
	\langle 1_{A_1} \ast 1_{-\U} \ast 1_{A'_3} , 1_{-A_2} \rangle_{L^2} \\
	&= (\alpha_1^2 \sigma_1 \sigma_2)^{-1}  (b'')^{-2}
	\langle 1_{A_1} \ast 1_{A_2} \ast 1_{A'_3} , 1_{\mathcal{U}} \rangle_{L^2}.
\end{align*}
By \eqref{eq:smallscalar2}, \eqref{eq:lambdasigma2}
and Lemma~\ref{thm:bohrsetgrowth} we have further
\begin{align*}
	I &\ll (\alpha_1^2 \sigma_1 \sigma_2)^{-1} (b'')^{-2} 
	\omega b^2 b' \\
	&= (\lambda \alpha_1^2 \alpha_2 \sigma_1 \sigma_2)^{-1} (b/b'') (b'/b'') \omega
	\cdot \lambda \alpha_2 b \\
	&\leq e^{ C (d+\alpha_1^{-1/2}) \log (2d/\rho v \wt{\alpha}) } \omega \cdot \lambda \alpha_2 b.
\end{align*}
Assuming 
$\omega \leq e^{-C' (d+\alpha_1^{-1/2}) \log (2d/\rho v \wt{\alpha})}$ 
with $C'$ large enough we have therefore
\begin{align}
	\label{eq:Ibound2}
	\langle 1_L \ast \mu_{S_1} , 1_{-A_2} \ast \mu_{-S_2} \rangle_{L^2} 
	= I \leq \tfrac{1}{4} \lambda \alpha_2 b.
\end{align}

\textit{Applying the Croot-Sisask lemma.}
We let $B'''=B''_{\rho''}$ with $\rho'' = c/d$
chosen such that $B'''$ is regular 
(via Lemma~\ref{thm:bohrsetsexistence})
and with $c$ small enough so that,
by the regularity of $B''$, 
$|S_1 + B'''| \leq |B''_{1+\rho''}| \leq (2/\sigma_1) |S_1|$.
Applying Lemma~\ref{thm:CSsmoothing} with $f=1_L$, $S = S_1$, $T=B'''$
and parameters $p,\ell,\theta$ to be determined later,
we obtain a set $X \subset B'''$ of relative density $\tau$ with
\begin{align}
	\label{eq:Xdensity2}
	\tau \geq \exp\big( \!-C(p\ell^2/\theta^2) \log 2\sigma_1^{-1} \big)
	\shortintertext{such that}
	\notag
	\| 1_L \ast \mu_{S_1} - 1_L \ast \mu_{S_1} \ast \lambda_X^{(\ell)} \|_{L^p}
	\leq \theta \| 1_L \|_{L^p}.
\end{align}
Proceeding exactly as in the proof of Proposition~\ref{thm:mainiteration1}
we then obtain from \eqref{eq:Ibound2} that
\begin{align}
	\label{eq:A_2scalarbound2}
	| \langle 1_L \ast \mu_{S_1} \ast \mu_{S_2} \ast \lambda_X^{(\ell)} , 1_{-A_2} \rangle_{L^2} |
	\leq \tfrac{1}{2} \lambda\alpha_2 b
\end{align}
for the choice of parameters $p = 2 + \log \alpha_2^{-1}$ and
$\theta = \lambda^{1-1/p} /4e \asymp 1$.

\textit{Obtaining an $L^2$ density increment.}
Since the support of $\mu_{S_1} \ast \mu_{S_2} \ast \lambda_X^{(\ell)}$ 
is contained in ${(2\ell+2)B' \subset B_{(2\ell+2)\rho} }$
we have, by Lemma~\ref{thm:bohrsetsregaveraging},
\begin{align}
	\label{eq:Bscalarbound2}
	\langle 1_L \ast \mu_{S_1} \ast \mu_{S_2} \ast \lambda_X^{(\ell)} , 1_B \rangle_{L^2}
	= \lambda b + O( \ell \rho d b ) \geq \tfrac{3}{4} \lambda b
\end{align}
provided that $\rho \leq c/(\ell d)$, which will turn out to be the case.
Forming the balanced function $f_{-A_2} = 1_{-A_2}-\alpha_2 1_B$, we see
from \eqref{eq:A_2scalarbound2} and \eqref{eq:Bscalarbound2} that
\begin{align*}
	| \langle 1_L \ast \mu_{S_1} \ast \mu_{S_2} \ast \lambda_X^{(\ell)} , f_{-A_2} \rangle_{L^2} |
	\geq \tfrac{1}{4} \lambda\alpha_2 b
\end{align*}

A computation entirely analogous to that in the proof of 
Proposition~\ref{thm:mainiteration1}
then shows that, choosing
$\ell = C \log 2\alpha_2^{-1}$ with $C$ large enough, 
we have
\begin{align*}
	\sum_{\gamma \in \Spec_{1/2}(\mu_X)} |\wh{f_{A_2}}(\gamma)|^2 
	\gg \alpha_2^2 b.
\end{align*}
The parameters we have chosen have size
$p \asymp \log 2\alpha_2^{-1}$, $\ell \asymp \log 2\alpha_2^{-1}$,
and $\theta \asymp 1$.
By \eqref{eq:Xdensity2}, \eqref{eq:lambdasigma2} 
and the bound 
$\log (2/v\wt{\alpha}) \ll (\log 2v^{-1})(\log 2\wt{\alpha}^{-1})$,
we have therefore
\begin{align*}
	\tau \geq \exp\big( \!-C\alpha_1^{-1/2}(\log 2v^{-1})(\log 2\wt{\alpha}^{-1})^4 \big).
\end{align*}
Since $\rho' \asymp v\wt{\alpha}/d$ and $\rho'' \asymp 1/d$,
we also have $\delta''' = c\rho (v\wt{\alpha}/d^2) \delta$.
Applying Lemma~\ref{thm:L2increment} to $A=A_2$
with $\eta = 1/2$ and some $\nu \asymp 1$,
we obtain a regular Bohr set $\breve{B} \leq B'''$ such that
\begin{align*}
 	\| 1_{A_2} \ast \mu_{\breve{B}} \|_{\infty} 
 	&\geq (1+c) \alpha_2, \\
	\breve{d} 
	&\leq d + C\alpha_1^{-1/2}(\log 2v^{-1})(\log 2\wt{\alpha}^{-1})^4, \\
	\breve{\delta} 
	&\geq c \rho (v\wt{\alpha}/d)^4 \delta,
\end{align*}
which again is enough to conclude.
\end{proof}

Owing to the shape of Proposition~\ref{thm:mainiteration2},
we now need to find arithmetic progressions
in thick subsets of Bohr sets.
This is precisely addressed by Sanders 
from \cite[Lemma~6.7]{sandersAPs}, which we now quote.

\begin{lemma}
	\label{thm:APinbohrthickset}
	Let $v \in (0,1)$ be parameter and
	let $B$ be a regular Bohr set.
	Suppose that ${ v^{-1} \leq c\delta N^{1/d} /d }$ and 
	$A \subset B$ has relative density at least $1 - v$, 
	then $A$ contains an arithmetic progression 
	of length at least $4v^{-1}$.
\end{lemma}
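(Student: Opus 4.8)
The plan is to produce a long arithmetic progression inside $B$ by means of Lemma~\ref{thm:APinbohrset}, and then to use the density hypothesis on $A$ together with the regularity of $B$ to slide a long initial segment of that progression into a position where the terms missing from $A$ occur only sparsely; a pigeonhole argument on blocks of consecutive terms will then isolate a sub-progression lying entirely in $A$.

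To set up, I would fix a small absolute constant $c_0$, use Lemma~\ref{thm:bohrsetsexistence} to choose $\rho \in \bigl[\tfrac{c_0}{2d},\tfrac{c_0}{d}\bigr)$ with $B_\rho$ regular, and apply Lemma~\ref{thm:APinbohrset} to $B_\rho$, whose radius $\rho\delta$ lies below $\pi$. This yields an arithmetic progression of common difference $r\neq 0$ and length $L \geq (1/2\pi)\rho\delta N^{1/d}$; taking differences of its terms then gives $\{\, kr : |k| < L \,\} \subset B_{2\rho}$, so in particular $\{\, jr : 0 \leq j < L \,\} \subset B_{2\rho}$. Since $B$ is regular and $2\rho \leq 2c_0/d < 1/(C_0 d)$, Definition~\ref{thm:bohrsetregdef} gives $|B_{1-2\rho}| \geq (1 - 2C_0\rho d)|B| \geq \tfrac12|B|$, and whenever $t \in B_{1-2\rho}$ and $0 \leq j < L$ one has $|1-\gamma(t+jr)| \leq (1-2\rho)\delta + 2\rho\delta = \delta$ for every $\gamma$ in the frequency set, so $t + jr \in B$. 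It is here that the hypothesis $v^{-1} \leq c\,\delta N^{1/d}/d$ is used: it ensures that $L$ exceeds $v^{-1}$ by a large absolute factor, provided $c$ is taken small compared with $c_0$.

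The averaging step then runs as follows. Write $D = B \setminus A$, so that $|D| \leq v|B|$, and put $g(t) = \#\{\, 0 \leq j < L : t + jr \in D \,\}$ for $t \in B_{1-2\rho}$. For each fixed $j$ the translate $B_{1-2\rho}+jr$ is contained in $B$, hence $(B_{1-2\rho}+jr)\cap D \subset D$, and therefore
\begin{align*}
	\sum_{t\in B_{1-2\rho}} g(t) = \sum_{j=0}^{L-1}\bigl|(B_{1-2\rho}+jr)\cap D\bigr| \leq L|D| \leq Lv|B| \leq 2Lv\,|B_{1-2\rho}|.
\end{align*}
So some $t \in B_{1-2\rho}$ satisfies $g(t) \leq 2Lv$; for that $t$ the genuine arithmetic progression $\{\, t+jr : 0\leq j< L\,\} \subset B$ has at most $2Lv$ of its terms outside $A$. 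Deleting the corresponding indices from $\{0,1,\dots,L-1\}$ splits the remaining good indices into at most $2Lv+1$ runs of consecutive integers, so the longest such run — and the sub-progression of $\{t+jr\}$ that it indexes, which lies entirely in $A$ — has length at least $(L-2Lv)/(2Lv+1)$. As $L$ is a large multiple of $v^{-1}$, this is $\gg v^{-1}$.

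The main obstacle is the arithmetic of the absolute constants at this final step, since $(L-2Lv)/(2Lv+1)$ only tends to a fixed multiple of $v^{-1}$ as $L$ grows: to bring out the precise value $4v^{-1}$ one must squeeze the regularity losses as close to $1$ as possible and choose the combinatorial extraction as economically as available, and I expect this optimization — rather than any conceptual difficulty — to be the heart of the argument. Along the way there is the usual bookkeeping to keep straight: checking that every Bohr dilate invoked has radius below $\pi$, that the factors $1\pm2\rho$ remain inside the range where the regularity of $B$ applies, and that $L \geq v^{-1}$ with enough room for all of the above to be effective.
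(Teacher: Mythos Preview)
The paper does not actually prove this lemma; it simply quotes it from Sanders \cite[Lemma~6.7]{sandersAPs}. So there is no in-paper proof to compare against. That said, your approach --- embed a long progression with step $r$ in a small dilate $B_\rho$, translate by elements of $B_{1-2\rho}$ so every shift stays inside $B$, average to locate a translate carrying few terms of $D=B\setminus A$, and then pigeonhole on runs --- is the standard one for statements of this type, and every step you wrote is correct.

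One point deserves correction, however. You describe the passage from ``at most $2Lv$ bad terms'' to ``a clean block of length $4v^{-1}$'' as a matter of squeezing constants, but in fact no tightening of your argument can reach $4v^{-1}$. Even if you shrink $\rho$ so that $|B_{1-2\rho}|\geq(1-\eps)|B|$ and hence $g(t)\leq(1+\eps)Lv$ for the good translate, the gap argument yields a clean run of length at most
\[
\frac{L-(1+\eps)Lv}{(1+\eps)Lv+1}\ \xrightarrow[L\to\infty]{}\ \frac{1-(1+\eps)v}{(1+\eps)v},
\]
which is asymptotically $v^{-1}$, not $4v^{-1}$; equally spaced bad indices show this is sharp for that extraction. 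So the precise constant $4$ is not accessible by the route you outlined. Either Sanders' original argument contains an additional idea, or the constant is simply not the point --- and indeed, in Proposition~\ref{thm:finaliteration} and the proof of Theorem~\ref{thm:AP2} only $\log 2v^{-1}$ enters the final estimates, so a version of the lemma with any absolute constant in place of $4$ suffices. Your argument proves exactly such a version.
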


We now modify our iterative lemma so as to yield 
arithmetic progressions in upper-level sets and
so as to bound the number of steps in the iteration more easily.

\begin{proposition}[Final iterative lemma]
	\label{thm:finaliteration}
	Let $\rho,v,\omega \in (0,1)$ be parameters.
	Let $B$ be a regular Bohr set and 
	assume that $B'=B_{\rho}$ is regular.
	Suppose that $A_1,A_2,A_3 \subset B$ have relative densities
	$\alpha_1,\alpha_2,\alpha_3$, respectively, and write 
	$\wt{\alpha} = \alpha_1\alpha_2\alpha_3$.
	Assume that $\rho \leq c\wt{\alpha}/d$,
	\begin{align}
		\label{eq:vomega}
		v^{-1} \leq c\delta' N^{1/d} / d
		\quad\text{and}\quad
		0 \leq \omega \leq 
		\exp\big( \!-C(d+\alpha_1^{-1/2})\log (2d/\rho v \wt{\alpha}) \big).
	\end{align}
	Then either
	\begin{enumerate}
		\item
		there exists a regular Bohr set $\breve{B} \leq B'$ such that
		\begin{align*}
			{\textstyle \prod_{1\leq j \leq 3} } 
			\| 1_{A_j} \ast \mu_{\breve{B}} \|_{\infty} 
			&\geq (1+c) \wt{\alpha}, \\
			\breve{d} 
			&\leq d + C \alpha_1^{-1/2} (\log 2v^{-1})(\log 2\wt{\alpha}^{-1})^4, \\
			\breve{\delta} 
			&\geq c\rho(v\wt{\alpha}/d)^C \delta,
		\end{align*}
		\item
		or the set $\{ y \,:\, 1_{A_1} \ast 1_{A_2} \ast 1_{A_3} (y) > \omega b^2 \}$ 
		contains an arithmetic progression of length at least $4v^{-1}$.
	\end{enumerate}
\end{proposition}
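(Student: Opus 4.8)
The plan is to derive Proposition~\ref{thm:finaliteration} almost mechanically from Proposition~\ref{thm:mainiteration2}, using Lemma~\ref{thm:scaling} to convert a one-set density increment into the product increment, and Lemma~\ref{thm:APinbohrthickset} to extract a progression from a thick level set; no new analytic input is needed. First I would apply Proposition~\ref{thm:mainiteration2} with the very same parameters $\rho,v,\omega$: the conditions $\rho \leq c\wt{\alpha}/d$ and $\omega \leq \exp(-C(d+\alpha_1^{-1/2})\log(2d/\rho v\wt{\alpha}))$ are exactly those required there, so one of its two alternatives holds, and it remains to recast each of them in the form claimed here.

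Suppose we are in case (i) of Proposition~\ref{thm:mainiteration2}: for some $i \in \{1,2\}$ there is a regular Bohr set $\breve{B}$ with the dimension and radius bounds already matching those of conclusion (i) here, and with $\| 1_{A_i} \ast \mu_{\breve{B}} \|_\infty \geq (1+c)\alpha_i$. Inspecting the proof of Proposition~\ref{thm:mainiteration2} one sees that this $\breve{B}$ is in fact a sub-Bohr set of $B'$ (it arises either from case (i) of Lemma~\ref{thm:KK3sets}, where it is $\leq B'$ directly, or from Lemma~\ref{thm:L2increment} applied inside a sub-Bohr set of $B'' = B'_{\rho'} \leq B'$), so $\breve{B} \leq B'$ and in particular $\breve{B} \subset B_\rho$ as sets. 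Since $\rho \leq c\wt{\alpha}/d \leq c/d$, I would then apply Lemma~\ref{thm:scaling} to each $A_j \subset B$ to get $\| 1_{A_j} \ast \mu_{\breve{B}} \|_\infty \geq (1 - O(\rho d/\alpha_j))\alpha_j$; and since $\rho d/\alpha_j \leq c\,\wt{\alpha}/\alpha_j = c\prod_{k\neq j}\alpha_k \leq c$, this lower bound is $(1 - O(c))\alpha_j$. Multiplying the genuine increment on the $i$-th set against these two near-equalities, and choosing the absolute constant in the hypothesis $\rho \leq c\wt{\alpha}/d$ small enough that $(1+c)(1-O(c))^2 \geq 1+c'$ for a suitable absolute $c'>0$, yields $\prod_{1\leq j\leq 3} \| 1_{A_j} \ast \mu_{\breve{B}} \|_\infty \geq (1+c')\wt{\alpha}$, which is conclusion (i) after renaming constants.

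If instead we are in case (ii) of Proposition~\ref{thm:mainiteration2}, there is $x \in G$ such that $\{ y : 1_{A_1} \ast 1_{A_2} \ast 1_{A_3}(x+y) > \omega b^2 \} \cap B'$ has relative density at least $1-v$ in $B'$. The Bohr set $B' = B_\rho$ is regular with dimension $d$ and radius $\delta' = \rho\delta$, so the hypothesis $v^{-1} \leq c\delta' N^{1/d}/d$ from \eqref{eq:vomega} is precisely the one demanded by Lemma~\ref{thm:APinbohrthickset}; applying that lemma produces an arithmetic progression of length at least $4v^{-1}$ contained in this set, hence contained in $\{ y : 1_{A_1} \ast 1_{A_2} \ast 1_{A_3}(x+y) > \omega b^2 \}$. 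Translating it by $x$, which leaves its length unchanged, places it inside $\{ y : 1_{A_1} \ast 1_{A_2} \ast 1_{A_3}(y) > \omega b^2 \}$, which is conclusion (ii).

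The only genuinely delicate point is the one in the middle paragraph: passing from an increment on a single one of $A_1,A_2$ to a lower bound on the full product $\prod_j \| 1_{A_j} \ast \mu_{\breve{B}} \|_\infty$. This is exactly why the argument is run at the small scale $B' = B_\rho$ with $\rho$ of size $\wt{\alpha}/d$, so that replacing $B$ by $\breve{B} \subset B'$ costs only a small constant factor in the density of each of the three sets, controlled by the scaling lemma; everything else is bookkeeping on top of Proposition~\ref{thm:mainiteration2}.
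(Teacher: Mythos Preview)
Your proposal is correct and follows essentially the same route as the paper's proof: apply Proposition~\ref{thm:mainiteration2}, use Lemma~\ref{thm:APinbohrthickset} to handle its case~(ii), and use Lemma~\ref{thm:scaling} on the two non-incremented sets to upgrade the single-set increment of its case~(i) to a product increment. You are in fact slightly more careful than the paper in one place: Proposition~\ref{thm:mainiteration2} as stated only asserts $\breve{B}\leq B$, whereas Proposition~\ref{thm:finaliteration} requires $\breve{B}\leq B'$, and you correctly justify this by inspecting the proof of Proposition~\ref{thm:mainiteration2}.
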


\begin{proof}
	By Proposition~\ref{thm:mainiteration2} we may either find $x \in G$ 
	such that
	\begin{align*}
		\mathcal{V} = 
		\{ y \,:\, 1_{A_1} \ast 1_{A_2} \ast 1_{A_3} (y+x) > \omega b^2 \} \cap B'
	\end{align*}
	has relative density at least $1-v$ in $B'$, in which case we may conclude
	by Lemma~\ref{thm:APinbohrthickset} with $A=\mathcal{V}$;
	or we may obtain a regular Bohr set $\breve{B}$ such that
	$\|1_{A_i} \ast \mu_{\breve{B}}\|_{\infty} \geq (1+c) \alpha_i$ 
	for some $i \in \{1,2\}$ and with the 
	prescribed radius and dimension bounds.
	Picking $j,k$ such that $\{i,j,k\} = \{1,2,3\}$,
	Lemma~\ref{thm:scaling} then shows that
	\begin{align*}
		\prod_{1 \leq \ell \leq 3} \|1_{A_\ell} \ast \mu_{\breve{B}}\|_{\infty}
		\geq (1+c) (1-O(\tfrac{\rho d}{\alpha_j}))
		(1-O(\tfrac{\rho d}{\alpha_k})) \wt{\alpha}
	\end{align*}
	and assuming $\rho \leq c'\wt{\alpha}/d$ with $c'$ 
	small enough this is indeed more than $(1+c/2)\wt{\alpha}$.
\end{proof}

We are now ready for the proof of Theorem~\ref{thm:AP2},
which we quote below with adjusted notation for convenience.

\begin{proposition*}[Theorem~\ref{thm:AP2}]
	Let $\eps \in (0,1)$ be a parameter and suppose that 
	$A_1,A_2,A_3$ are subsets of $\ZN$
	of respective densities $\alpha_1,\alpha_2,\alpha_3$,
	and write $\wt{\alpha} = \alpha_1 \alpha_2 \alpha_3$.
	Then $ A_1 + A_2 + A_3 $ contains an arithmetic 
	progression $P$ of length at least
	\begin{align*}
		\exp\Big( c\eps^{1/2} \alpha_1^{1/4} (\log N)^{1/2} ( \log 2\wt{\alpha}^{-1})^{-7/2} \Big)
		\quad \text{if} \quad
		\alpha_1 (\log 2\wt{\alpha}^{-1} )^{-14} \geq C(\eps \log N)^{-2}
	\end{align*}
	and such that
	$1_{A_1} \ast 1_{A_2} \ast 1_{A_3} (x) \geq N^{ - \eps }$
	for every $x \in P$.
\end{proposition*}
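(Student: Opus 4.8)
The plan is to iterate Proposition~\ref{thm:finaliteration}, in exactly the way Proposition~\ref{thm:B1} iterates Proposition~\ref{thm:mainiteration1} in the proof of Theorem~\ref{thm:AP1}, but now carrying along the two extra parameters $v$ (which controls the length of the final progression) and $\omega$ (which controls the counting lemma). Throughout I write $L = \log 2\wt{\alpha}^{-1}$.

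First I would set up the iteration. Put $B^{(1)} = B(\{0\},2) = \ZN$, which is regular, and $(A_1^{(1)},A_2^{(1)},A_3^{(1)}) = (A_1,A_2,A_3)$. At step $i$ I apply Proposition~\ref{thm:finaliteration} to $A_1^{(i)},A_2^{(i)},A_3^{(i)} \subset B^{(i)}$ with radius parameter $\rho_i \asymp \kappa_i \wt{\alpha}^{(i)}/(i^2 d_i)$, where $\kappa_i \in [\tfrac{1}{2},1)$ is supplied by Lemma~\ref{thm:bohrsetsexistence} so that $B^{(i)}_{\rho_i}$ is regular, and with $v,\omega$ independent of $i$ to be fixed below. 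In case~(ii) I stop; in case~(i) I set $B^{(i+1)} = \breve{B}^{(i)}$ and choose translates $x_{j,i}$ with $A_j^{(i+1)} := (A_j^{(i)} - x_{j,i}) \cap B^{(i+1)}$ of relative density $\alpha_j^{(i+1)} = \|1_{A_j^{(i)}} \ast \mu_{B^{(i+1)}}\|_\infty$, so that $\wt{\alpha}^{(i+1)} = \prod_j \alpha_j^{(i+1)} \geq (1+c)\wt{\alpha}^{(i)}$; since this product never exceeds $1$, case~(ii) must occur within $n = O(L)$ steps. Exactly as in the proof of Proposition~\ref{thm:B1}, the choice of $\rho_i$ and Lemma~\ref{thm:scaling} keep the individual densities comparable to their starting values, $\alpha_j^{(i)} \gg \alpha_j$ and hence $\wt{\alpha}^{(i)} \geq \wt{\alpha}$, uniformly in $i$. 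The dimension bound of Proposition~\ref{thm:finaliteration} then reads $d_{i+1} \leq d_i + O(\alpha_1^{-1/2}(\log 2v^{-1})L^4)$, giving $d_n \ll \alpha_1^{-1/2}(\log 2v^{-1})L^5$ (and, crudely, $\log d_n \ll L + \log 2v^{-1}$ since $\alpha_1 \geq \wt{\alpha}$), while the radius bound $\delta_{i+1} \geq c\rho_i(v\wt{\alpha}^{(i)}/d_i)^C\delta_i$, summed over the $O(L)$ steps, gives $\log\delta_n^{-1} \ll L(L + \log 2v^{-1})$ and hence, by Lemma~\ref{thm:bohrsetsize}, $\log b_n^{-1} \ll d_n\log 4\delta_n^{-1} \ll \alpha_1^{-1/2}(\log 2v^{-1})L^6(L + \log 2v^{-1})$.

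With these a priori bounds in hand I would then fix $\log 2v^{-1} = c\,\eps^{1/2}\alpha_1^{1/4}(\log N)^{1/2}L^{-7/2}$ for a small absolute constant $c$, and take $\omega$ as large as the standing hypothesis of Proposition~\ref{thm:finaliteration} allows at every step $i \leq n$ (the binding case being $i = n$, where $\log\omega^{-1} \ll \alpha_1^{-1/2}(\log 2v^{-1})L^5(L + \log 2v^{-1})$ after crude estimation using $\alpha_j^{(i)} \gg \alpha_j$, $\wt{\alpha}^{(i)} \geq \wt{\alpha}$, the bounds on $d_i,\delta_i$, and $\log(2/v\wt{\alpha}) \ll (\log 2v^{-1})L$). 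Two things then have to be verified at $i = n$ (which suffices, the relevant quantities being monotone in $i$). The first is the progression hypothesis $v^{-1} \leq c\delta_n'N^{1/d_n}/d_n$, which reduces to $\log 2v^{-1} \lesssim (\log N)/d_n \asymp \alpha_1^{1/2}\log N/((\log 2v^{-1})L^5)$, i.e. to $(\log 2v^{-1})^2 \lesssim \alpha_1^{1/2}\log N/L^5$, and holds comfortably for our choice. The second --- the crux of the counting lemma --- is $\omega b_n^2 \geq N^{-\eps}$, i.e. $\log(\omega^{-1}b_n^{-2}) \leq \eps\log N$; since $\log\omega^{-1}$ is dominated by $\log b_n^{-1}$, this reduces to $\alpha_1^{-1/2}(\log 2v^{-1})L^6(L + \log 2v^{-1}) \lesssim \eps\log N$, and on substituting the value of $\log 2v^{-1}$ one checks, distinguishing $\log 2v^{-1} \leq L$ from $\log 2v^{-1} > L$, that this is implied precisely by the hypothesis $\alpha_1(\log 2\wt{\alpha}^{-1})^{-14} \geq C(\eps\log N)^{-2}$, provided $c$ is small and $C$ large enough. (The mild circularity --- $v$ and $\omega$ are fixed before the iteration runs, yet the a priori bounds on $d_n,\delta_n$ involve $\log 2v^{-1}$ --- is harmless, as $d_n$ depends on $\log 2v^{-1}$ only polynomially whereas $v$ depends on $d_n$ only logarithmically.)

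Finally I would run the iteration to completion: it terminates at some step $n = O(L)$ in case~(ii) of Proposition~\ref{thm:finaliteration}, yielding an arithmetic progression $P'$ of length at least $4v^{-1}$ on which $1_{A_1^{(n)}} \ast 1_{A_2^{(n)}} \ast 1_{A_3^{(n)}} > \omega b_n^2 \geq N^{-\eps}$. Since each $A_j^{(n)}$ is contained in a translate $A_j - t_j$ of $A_j$, the progression $P = P' + t_1 + t_2 + t_3$ has the same length, lies in $\{\,1_{A_1} \ast 1_{A_2} \ast 1_{A_3} > N^{-\eps}\,\} \subset A_1 + A_2 + A_3$, and satisfies $\log|P| \geq \log 2v^{-1} = c\,\eps^{1/2}\alpha_1^{1/4}(\log N)^{1/2}L^{-7/2}$, as claimed. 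I expect the main obstacle to be the parameter bookkeeping of the previous step --- above all, verifying $\omega b_n^2 \geq N^{-\eps}$, which forces one to control the size $b_n$, hence both the dimension $d_n$ and the radius $\delta_n$, of the Bohr set surviving after all $O(L)$ iterations, and to see that the hypothesis on $\alpha_1$ is exactly what makes this possible.
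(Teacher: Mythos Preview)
Your proposal is correct and follows essentially the same route as the paper's own proof: iterate Proposition~\ref{thm:finaliteration} starting from $B^{(1)}=\ZN$, bound $n=O(L)$ via the density increment on $\wt{\alpha}^{(i)}$, derive uniform bounds on $d_i$ and $\delta_i$, then choose $\log 2v^{-1} = c\,\eps^{1/2}\alpha_1^{1/4}(\log N)^{1/2}L^{-7/2}$ and $\omega$ maximal subject to the standing hypothesis, and verify \eqref{eq:vomega} together with $\omega b_n^2 \geq N^{-\eps}$. The only cosmetic difference is that the paper uses the cruder estimate $\log(2/v\wt{\alpha}) \ll (\log 2v^{-1})L$ throughout (rather than your $L+\log 2v^{-1}$) and writes down an explicit choice $\omega = N^{-c\eps/L}$, so that the case distinction you make collapses into a single line; the resulting bounds and the density constraint $\alpha_1 L^{-14} \geq C(\eps\log N)^{-2}$ are identical.
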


\begin{proof}
The proof proceeds by iteration of
Proposition~\ref{thm:finaliteration}.
We are brief since the iteration process is very similar
to that of the proof of Proposition~\ref{thm:B1}.

We construct iteratively
a sequence of regular Bohr sets $B^{(i)}$ with
parameters $d_i,\delta_i,b_i$ and, for every 
$1 \leq j \leq 3$, a sequence of sets 
$A_j^{(i)} \subset B^{(i)}$ 
of relative density $\alpha_j^{(i)}$,
and we write 
$\wt{\alpha}^{(i)}
=\alpha_1^{(i)}\alpha_2^{(i)}\alpha_3^{(i)}$.
We initiate the iteration with $B^{(1)}=\ZN$
and $A_j^{(1)}=A_j$ for $1\leq j\leq 3$.
At each step $i$ we apply Proposition~\ref{thm:finaliteration} to
the sets $A_j^{(i)}$ with 
parameters $\rho_i$, $v$, $\omega$ to be
determined later 
(note that $v$ and $\omega$ are chosen independent of $i$),
and in case (i) we define $B^{(i+1)}=\breve{B}^{(i)}$,
while in case (ii) we stop the iteration.
For every $1 \leq j \leq 3$, we pick $x_{i,j}$ so that
$A_j^{(i+1)} \coloneqq (A_j - x_{i,j}) \cap B^{(i+1)}$
has relative density
$\alpha_j^{(i+1)} 
= \| 1_{ \smash{A_j^{(i)}} } \ast \mu_{ \smash{B^{(i+1)}} } \|_{\infty}$
in $B^{(i+1)}$, whenever $B^{(i+1)}$ is defined.

By the density increment
$\wt{\alpha}^{(i+1)} \geq (1+c) \wt{\alpha}^{(i)}$
from Proposition~\ref{thm:finaliteration} we
see that the iteration stops after
at most $n = O( \log 2\wt{\alpha}^{-1} )$ steps.
We choose 
$\rho_i = c\wt{\alpha}^{(i)}/(i^2 d_i)$ such that 
$B^{(i)}_{\rho_i}$ is regular 
(via Lemma~\ref{thm:bohrsetsexistence}).
By Lemma~\ref{thm:scaling} we then have
$\alpha_j^{(i+1)} \geq ( 1 - O(i^{-2}) ) \alpha_j^{(i)}$ 
for every $i,j$ and therefore
$\alpha_j^{(i)} \geq e^{-O(\sum_{i=1}^{\infty} i^{-2}) } \alpha_j
\gg \alpha_j$
uniformly in $1 \leq j \leq 3$ and $1 \leq i \leq n$.
We then have, from the bounds of Proposition~\ref{thm:finaliteration},
\begin{align*}
	d_{i+1} &\leq d_i + C\alpha_1^{-1/2} 
	(\log 2v^{-1})(\log 2\wt{\alpha}^{-1})^4
\end{align*}
for $i < n$ and therefore 
$d_i \leq C\alpha_1^{-1/2} (\log 2v^{-1}) (\log 2\wt{\alpha}^{-1})^5$
uniformly in $1 \leq i \leq n$.
Bounding crudely 
$\log (2/v \wt{\alpha}) \ll (\log 2v^{-1}) (\log 2\wt{\alpha}^{-1})$,
we also have
\begin{align*}
	\delta_{i+1} &\geq 
	\exp\big( \!-C(\log 2v^{-1})(\log 2\wt{\alpha}^{-1}) \big) \, \delta_i
\end{align*}	
for $i < n$ and therefore
$\delta_i \geq 
\exp\big( \!-C(\log 2v^{-1})(\log 2\wt{\alpha}^{-1})^2 \big)$
uniformly in $1 \leq i \leq n$.

We now choose $v$ and $\omega$ 
so that \eqref{eq:vomega} is satisfied at every step.
From the previous dimension and radius bounds, 
we see that a sufficient condition for $v$ is
\begin{align*}
	\log 2v^{-1} \leq
	\frac{ c\alpha_1^{1/2} \log N }{ (\log 2v^{-1})(\log 2\wt{\alpha}^{-1})^5 }
	- C (\log 2v^{-1})(\log 2\wt{\alpha}^{-1})^2 .
\end{align*}
We choose $v$ defined by 
$\log 2v^{-1} = c' \eps^{1/2} \alpha_1^{1/4}(\log N)^{1/2} (\log 2\wt{\alpha}^{-1})^{-7/2}$
with $c'$ small enough so as to satisfy this;
since $\log 2v^{-1} \in [\, \log 2, +\infty)$,
this requires
$\alpha_1 (\log 2\wt{\alpha}^{-1})^{-14} \geq C (\eps \log N)^{-2}$
for a certain large enough $C$.
Bounding again crudely
$\log (2/v\wt{\alpha}) \ll (\log 2v^{-1})(\log 2\wt{\alpha}^{-1})$,
we also see that a sufficient condition for $\omega$ 
to satisfy \eqref{eq:vomega} is
\begin{align*}
	\omega \leq 
	\exp\big( \!-C\alpha_1^{-1/2} (\log 2v^{-1})^2 (\log 2\wt{\alpha}^{-1})^6 \big)
\end{align*}
which allows for the choice 
$\omega = N^{ - (c\eps /\log 2\wt{\alpha}^{-1}) }$
upon inserting the above expression of $\log 2v^{-1}$.
From Lemma~\ref{thm:bohrsetsize} and the choices
of $v$ and $\omega$, we eventually obtain 
$\omega b_i^2 \geq N^{ - \eps }$
uniformly in $1 \leq i \leq n$. 
When we are in case (ii) of Proposition~\ref{thm:finaliteration},
we have therefore found the desired arithmetic progression.
\end{proof}

\section{Arithmetic progressions in sumsets of sets of primes}
\label{sec:APprime}

We now consider applications of 
Theorems~\ref{thm:AP1}~and~\ref{thm:AP2}
to the problem of finding arithmetic progressions 
in $A+A+A$, for $A$ a subset of the primes.
This problem was first considered by
Cui, Li and Xue in \cite{CLX}.
In that paper a connection with the original problem on 
arithmetic progressions in sumsets 
of sets of integers was outlined
and exploited via the original theorem of Green on $A+A+A$,
which finds an arithmetic progression of size $N^{c\alpha^2}$
in this sumset when $A$ has density $\alpha$.
To obtain Theorem~\ref{thm:APprime} we exploit the same 
connection, taking advantage of the slightly longer progression
given by Theorem~\ref{thm:AP1}.
Corollary~\ref{thm:corprime} is obtained differently,
by a direct application of Theorem~\ref{thm:AP2}.

We denote by $\log_k$ the logarithm iterated $k$ times
and we let $n$ be a large enough integer.
We also recall that when $G,H$ are two groups,
a Freiman $3$-isomorphism from $A \subset G$ 
to $B \subset H$
is a map $\phi \colon A \rightarrow B$ 
such that, for every $(a_i)_{1\leq i\leq 3}$ and
$(a'_i)_{1\leq i\leq 3}$ in $A^3$, 
$\sum_i a_i = \sum_i a'_i$
if and only if
$\sum_i \phi(a_i) = \sum_i \phi(a'_i)$;
we refer the reader to \cite[Section~5.3]{taovu}
for the properties of such maps. 
The following can be extracted from the computations
of~\cite{CLX}.

\begin{proposition}
	\label{thm:CLXcomputation1}
	Let $\eps, \delta \in (0,1)$
	and suppose that $A$ has density 
	$\alpha$ in $\{1,\dots,n\} \cap \mathcal{P}$.
	Then there exist an integer $N$ such that  $n / (\log n) \ll N \ll n$,
	a subset $A'$ of $A$ which is Freiman 3-isomorphic
	to a subset $A''$ of $\ZN$,
	a function $f$ on $\ZN$ with support in $A''$, 
	and a subset $A_1$ of $\ZN$ of density at least $c\alpha$
	such that 
	\begin{align}
		\label{eq:triplefcvl}
		&&\phantom{(x \in G)}
		f \ast f \ast f (x) 
		&\geq \alpha^3 \, 1_{A_1} \ast 1_{A_1} \ast 1_{A_1} (x)
		- O( \eps + \delta^{1/2} )
		&&(x \in G)
	\end{align}
	provided $C (\log_4 N)/(\log_2 N) \leq (\eps/2\pi)^{ C\delta^{-5/2} }$.
\end{proposition}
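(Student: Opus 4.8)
The plan is to retrace the restriction-method computation of Cui, Li and Xue \cite{CLX}, this time keeping explicit track of how the error term in \eqref{eq:triplefcvl} depends on the parameters $\eps$ and $\delta$.

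First I would carry out the standard $W$-trick together with a Freiman embedding. Fix a sieve parameter $w$ of size $\asymp\log_3 N$ (its exact value to be pinned down by the concluding hypothesis) and set $W=\prod_{p\le w}p$, so that $\log W\asymp w$, hence $W\asymp\log_2 N$ by Chebyshev's bound, and $\tfrac{\phi(W)}{W}\asymp\tfrac{1}{\log_4 N}$ by Mertens. A pigeonhole argument produces a residue class $b$ coprime to $W$ retaining a proportion $\gg 1/\phi(W)$ of the primes, and hence $\gg\alpha/\phi(W)$ of $A$, inside some subinterval of $\{1,\dots,n\}$; writing $A'$ for this part of $A$, the affine map $x\mapsto(x-b)/W$ is injective on $A'$ and lands in an interval of length $\asymp n/W$. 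Choosing (Bertrand) a prime $N$ in $[3n/W,6n/W]$ prevents three-fold sums from wrapping around, so that $A'$ is Freiman $3$-isomorphic to its image $A''\subset\Z/N\Z$; and since $W\asymp\log_2 N\ll\log n$ one has $n/\log n\ll N\ll n$ as required.

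Next I would take $f$ on $\Z/N\Z$ to be the transport to $A''$ of the von Mangoldt function restricted to $A'$, normalised and weighted by the usual factor $\tfrac{\phi(W)}{W}$ so that $f$ is pointwise dominated by Green's enveloping-sieve majorant for the primes in the class $b\bmod W$. Green's restriction theorem \cite{greenrestriction}, in the sharpened form of Helfgott and de Roton \cite{HRrestriction}, then furnishes a bound $\sum_\xi|\widehat f(\xi)|^q\ll_q 1$ for some exponent $q$ slightly above $2$, the admissible range of $q$ degrading as $w$ grows --- one source of the dependence on $w$. Applying the transference principle to $f$ produces a bounded model and, from it, the set $A_1\subset\Z/N\Z$ of density $\gg\alpha$, together with a dissection of the dual group into a family $\mathfrak M$ of $\asymp\delta^{-O(1)}$ major arcs, on which the relevant Fourier data of $f$ and of $\alpha 1_{A_1}$ agree to within $\eps$, and a complementary minor-arc region on which the relevant Fourier coefficients are $O(\delta)$ in absolute value. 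From this, \eqref{eq:triplefcvl} follows by a routine circle-method comparison of $f^{(3)}(x)=\sum_\xi\widehat f(\xi)^3\xi(x)$ with $\alpha^3 1_{A_1}^{(3)}(x)$: the minor-arc frequencies contribute $O(\delta^{1/2})$ once the $\ell^\infty$ bound $\delta$ is interpolated against the restriction estimate, while the $\asymp\delta^{-O(1)}$ major arcs contribute $O(\eps)$ after the remaining factors are bounded by Parseval.

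The main obstacle is the quantitative bookkeeping that ties $w$, $\eps$ and $\delta$ together. On one hand $w$ must be chosen small enough that the restriction estimate remains usable with an exponent $q$ good enough to turn the $\ell^\infty$ gain $\delta$ off $\mathfrak M$ into a term $O(\delta^{1/2})$, and that the residual sieve and major-arc correction --- of size $\asymp\log_4 N/\log_2 N$ after the above choices of parameters --- is dominated by the tolerance $\eps$; on the other hand, forcing the Fourier coefficients below $\delta$ off $\mathfrak M$ requires a Farey dissection at level $Q\asymp\delta^{-c}$, whence $|\mathfrak M|\asymp\delta^{-O(1)}$ and the genuine major-arc error is $|\mathfrak M|\cdot O(\eps)$ rather than $O(\eps)$. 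Balancing these competing requirements is precisely what forces the stated hypothesis $C(\log_4 N)/(\log_2 N)\le(\eps/2\pi)^{C\delta^{-5/2}}$, and extracting the exact exponent $\delta^{-5/2}$ requires a careful account of how the quality of the minor-arc estimate deteriorates as the major-arc radius shrinks.
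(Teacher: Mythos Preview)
The paper does not give its own proof of this proposition; it merely records that the statement ``can be extracted from the computations of~\cite{CLX}''. Your sketch is exactly such an extraction --- the $W$-trick with $w\asymp\log_3 N$ and $W\asymp\log_2 N$, the Freiman embedding into $\Z/N\Z$ with $N\asymp n/W$, the enveloping-sieve majorant together with the Green/Helfgott--de Roton restriction estimate, the major/minor-arc comparison producing the $O(\eps)$ and $O(\delta^{1/2})$ terms, and the bookkeeping linking $w$, $\eps$, $\delta$ to the stated hypothesis --- and it is the route taken in~\cite{CLX}, so your proposal agrees with what the paper cites.
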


\begin{proposition}
	\label{thm:CLXcomputation2}
	Let $\eps, \delta \in (0,1)$
	and suppose that $A$ has density 
	$\alpha$ in $\{1,\dots,n\} \cap \mathcal{P}$.
	Then there exist an integer $N$ such that $n^{1/2} \ll N \ll n$,
	a subset $A'$ of $A$ which is Freiman 3-isomorphic
	to a subset $A''$ of $\ZN$,
	a function $g$ on $\ZN$ with support in $A''$
	and a subset $A_1$ of $\ZN$ of density at least $c\alpha^2$
	such that
	\begin{align*}
		&&\phantom{(x \in G)}
		g \ast g \ast g (x) 
		&\geq \alpha^3\, 1_{A_1} \ast 1_{A_1} \ast 1_{A_1} (x)
		- O( \eps + \delta^{1/2} )
		&&(x \in G)
	\end{align*}
	provided $\delta^{-5/2} \log 2\eps^{-1} \leq c \log N$.
\end{proposition}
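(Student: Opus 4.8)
The plan is to run the standard prime-transference argument of Cui, Li and Xue \cite{CLX} --- which itself adapts Green's treatment of $A+A+A$ in the primes --- more or less verbatim, making the single change that Green's restriction theorem for the primes \cite{greenrestriction} is fed in where the companion Proposition~\ref{thm:CLXcomputation1} uses the sharper Helfgott--de Roton estimate \cite{HRrestriction}. It is exactly this substitution that produces the weaker density $c\alpha^{2}$ of the model set $A_{1}$ (in place of $c\alpha$ there) and the condition $\delta^{-5/2}\log 2\eps^{-1}\leq c\log N$ (in place of the condition involving $\log_{4}N$ there).

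First I would carry out the usual reduction to $\ZN$. A $W$-trick --- with $W=\prod_{p\leq w}p$ for a slowly growing $w=w(n)$, pigeonholing to a reduced residue $b\bmod W$ on which $A$ is not depleted, rescaling the progression $b+Wm$, and then embedding a large subset $A'\subset A$ by a Freiman $3$-isomorphism (cf.\ \cite[Section~5.3]{taovu}) --- produces a prime $N$ with $n^{1/2}\ll N\ll n$, a set $A''\subset\ZN$ carrying a von Mangoldt--type weight, and a nonnegative function $g$ supported on $A''$, obtained by normalising that weight by an absolute constant, with $|g|$ dominated by a constant multiple of the enveloping-sieve majorant $\nu$ underlying \cite{greenrestriction}. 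Green's restriction theorem then gives $\|\wh{g}\|_{\ell^{q}}\ll 1$ for $q=5/2$, and in particular $|\Spec_{\delta}(g)|\ll\delta^{-5/2}$; it is also at this stage, in the coarser form of Green's theorem, that one only extracts a model set $A_{1}\subset\ZN$ of density $\gg\alpha^{2}$ rather than $\gg\alpha$.

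Next comes the transference proper. Put $\Lambda=\Spec_{\delta}(g)$ and let $B_{0}=B(\Lambda,\eps)$, a Bohr set of dimension $d\ll\delta^{-5/2}$ and radius $\eps$; by Lemma~\ref{thm:bohrsetsize} it satisfies $m_{G}(B_{0})\geq\exp(-O(\delta^{-5/2}\log 2\eps^{-1}))$, which is $\geq 1/N$ precisely under the hypothesis of the proposition, so that $B_{0}\neq\varnothing$ and $\mu_{B_{0}}$ is well defined. Writing $g=g_{1}+g_{2}$ with $g_{1}=g\ast\mu_{B_{0}}$, one has $\wh{g_{2}}(\gamma)=\wh{g}(\gamma)\,(1-\wh{\mu_{B_{0}}}(\gamma))$, which is $O(\delta)$ off $\Lambda$ and $O(\eps)$ on $\Lambda$, so $\|\wh{g_{2}}\|_{\infty}\ll\delta+\eps$; meanwhile $g_{1}$ is a nonnegative function of the same mean as $g$ with $\|g_{1}\|_{\infty}\leq 1+o(1)$ by the restriction/sieve estimate, which is what yields the level set $A_{1}$ with $g_{1}\geq\alpha\,1_{A_{1}}$ (after the harmless constant rescaling). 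Expanding $g\ast g\ast g=g_{1}^{(3)}+(\text{seven cross terms})$, each cross term carries a factor $\wh{g_{2}}$; bounding it pointwise by $\sum_{\gamma}|\wh{g}(\gamma)|^{2}|\wh{g_{2}}(\gamma)|$ and applying H\"older with exponents $(\tfrac52,\tfrac52,5)$ together with the restriction bound and the interpolation $\|\wh{g_{2}}\|_{\ell^{5}}\leq\|\wh{g_{2}}\|_{\infty}^{1/2}\|\wh{g_{2}}\|_{\ell^{5/2}}^{1/2}$ gives a contribution $O(\delta^{1/2})$ from the part of $g_{2}$ off $\Lambda$, while a direct $\ell^{1}$ estimate against the finite set $\Lambda$ gives $O(\eps)$ from the part on $\Lambda$; so the cross terms total $O(\eps+\delta^{1/2})$, and since $g_{1}^{(3)}\geq\alpha^{3}\,1_{A_{1}}^{(3)}$ the claimed inequality follows.

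The main obstacle is not any individual step but the consistency of the three parameters $\delta,\eps,N$: the spectral threshold $\delta$ must be small enough that the $\delta^{1/2}$ error is acceptable, the Bohr radius $\eps$ small enough that $g_{1}$ is a faithful model, and yet the Bohr set $B(\Lambda,\eps)$ --- of dimension $\asymp\delta^{-5/2}$ forced on us by Green's restriction exponent rather than Helfgott--de Roton's --- must remain nontrivial in $\ZN$, which is exactly the hypothesis $\delta^{-5/2}\log 2\eps^{-1}\leq c\log N$; one must also check this survives the $W$-trick and Freiman embedding, i.e.\ that the admissible window $n^{1/2}\ll N\ll n$ is compatible with the chosen $w(n)$. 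The other delicate point is simply importing the restriction estimate $\|\wh{g}\|_{\ell^{5/2}}\ll 1$ on $\ZN$ in the exact normalisation needed here, which is what \cite{greenrestriction} supplies.
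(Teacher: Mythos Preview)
The paper does not actually prove this proposition: both Proposition~\ref{thm:CLXcomputation1} and Proposition~\ref{thm:CLXcomputation2} are stated without proof, with the single sentence ``The following can be extracted from the computations of~\cite{CLX}'' serving for both. So there is no argument in the paper to compare against; the author is simply quoting two black boxes from Cui, Li and Xue.

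Your sketch is a faithful outline of what that black box contains, and you correctly identify the one structural difference between the two propositions: here Green's restriction estimate~\cite{greenrestriction} with exponent $q=5/2$ is used in place of the sharper Helfgott--de~Roton input~\cite{HRrestriction}, which is precisely what converts the condition on $\eps,\delta$ into the simple inequality $\delta^{-5/2}\log 2\eps^{-1}\leq c\log N$ via Lemma~\ref{thm:bohrsetsize}. The decomposition $g=g_1+g_2$, the H\"older/interpolation bound on the cross terms giving $O(\eps+\delta^{1/2})$, and the recovery of $g_1^{(3)}\geq\alpha^3\,1_{A_1}^{(3)}$ are all standard and correctly described.

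The one place where your write-up is genuinely hand-wavy is the origin of the density $c\alpha^2$ for $A_1$. You attribute it to ``the coarser form of Green's theorem'', but the restriction exponent by itself governs only the size of $\Spec_\delta(g)$ and hence the Bohr-set condition, not the density of the model set: indeed, in your own sketch $g_1$ has mean $\asymp\alpha$ and $\|g_1\|_\infty=O(1)$, which by Markov would give $|A_1|\gg\alpha N$, not $\alpha^2 N$. The extra factor of $\alpha$ in~\cite{CLX} enters elsewhere in their construction (in how they pass to the model and choose the embedding into $\ZN$ with $N\gg n^{1/2}$), and if you want to turn your outline into a self-contained proof you would need to trace that step explicitly rather than fold it into the restriction theorem.
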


\begin{proof}[Proof of Theorem~\ref{thm:APprime}]
To obtain the first estimate we
apply Proposition~\ref{thm:CLXcomputation1}.
Since $A_1$ has density at least
$c \alpha$, we know by Theorem~\ref{thm:AP1} 
that $A_1+A_1+A_1$ contains an arithmetic progression $P$
of length at least $N^{ c\alpha/(\log 2\alpha^{-1})^5 }$ 
such that, for every $x \in P$,
\begin{align*}
	1_{A_1} \ast 1_{A_1} \ast 1_{A_1} (x) 
	\geq \exp\big( \!-C\alpha^{-1} (\log 2\alpha^{-1})^7 \big).
\end{align*}
Choosing $\eps = \delta = \exp( -C'\alpha^{-1} (\log 2\alpha^{-1})^7 )$
with $C'$ large enough it then follows 
from \eqref{eq:triplefcvl} that
$f \ast f \ast f (x) > 0$ for all $x \in P$,
and therefore that $P \subset A'' + A'' + A''$.
Pulling back to $A' \subset A$ by the Freiman
isomorphism we are done provided 
$\delta^{-5/2} \log 2\eps^{-1} \leq c \log_3 N$,
which is satisfied for $\alpha \geq C(\log_5 N)^7 / \log_4 N$.

To obtain the second estimate we apply
Proposition~\ref{thm:CLXcomputation2}, 
where this time $A_1$ has density at least $c\alpha^2$.
Theorem~\ref{thm:AP1} then yields a progression 
$P \subset A_1+A_1+A_1$  of length at least 
$N^{c\alpha^2 / (\log 2\alpha^{-1})^5}$ such that
\begin{align*}
	1_{A_1} \ast 1_{A_1} \ast 1_{A_1} (x) \geq 
	\exp\big( \!-C\alpha^{-2} (\log 2\alpha^{-1})^7 \big),
\end{align*}
and choosing
$\delta=\eps=\exp(-C'\alpha^{-2}(\log 2\alpha^{-1})^7 )$
we may conclude as before provided 
\begin{align*}
	e^{C\alpha^{-2}(\log 2\alpha^{-1})^7 } {\leq c\log N}.
\end{align*}
This is certainly satisfied for 
$\alpha \geq C (\log_3 N)^{7/2} / (\log_2 N)^{1/2}$.
\end{proof}

\begin{proof}[Proof of Corollary~\ref{thm:corprime}]
	The projection $\pi \colon \Z \rightarrow \Z/6N\Z$ is a 
	Freiman $3$-isomorphism from $A \subset \{1,\dots,N\}$ 
	to $A' \coloneqq \pi(A)$ which
	preserves arithmetic progressions.
	Note that $A'$ has density $\gg \alpha / \log N$
	in $\Z/6N\Z$.
	Applying Theorem~\ref{thm:AP2} with $A=B=C=A'$,
	$\eps=\tfrac{1}{2}$ and pulling
	back to $\Z$ then concludes the proof.
\end{proof}

\section{Remarks and conclusion}
\label{sec:conclusion}

There is a strong parallel between the 
quantitative results one can obtain 
about arithmetic progressions in sumsets
and on Roth's theorem
by the density-increment strategy of \cite{sandersroth2}.
Indeed the limitation in the range of density 
in both problems is similar.
To see this, consider a subset $A$ of $\ZN$ of density $\alpha$.
Sanders \cite{sandersroth2} then showed that when 
$\alpha \geq (\log N)^{-1+o(1)}$,
there exists a nontrivial three-term 
arithmetic progression in $A$,
which Bloom \cite{bloom} 
generalized to show (in particular) that for
$\alpha \geq (\log N)^{-2+o(1)}$, 
any translation-invariant
equation in four variables 
has a nontrivial solution in $A$.
By comparison, the same density-increment strategy
applied to our problem
can be made to obtain a long progression in $A+A$
in the range $\alpha \geq (\log N)^{-1+o(1)}$
(although this is not pursued here, since the argument of 
\cite{CLS} is simpler in this case) 
and, by Theorem~\ref{thm:AP2}, it yields one in $A+A+A$ for 
$\alpha \geq (\log N)^{-2+o(1)}$.
It is therefore likely that any improvement of
this technique would result in a better density dependency
in both problems.

\bibliographystyle{amsplain}
\bibliography{apsumsets_arxiv2}

\bigskip

\textsc{\footnotesize Département de mathématiques et de statistique,
Université de Montréal,
CP 6128 succ. Centre-Ville,
Montréal QC H3C 3J7, Canada
}

\textit{\small Email address: }\texttt{\small henriot@dms.umontreal.ca}

\end{document}